\setlist{leftmargin=*}
\newtheorem{thm}{Theorem}[section] 
\newtheorem{cor}[thm]{Corollary}
\newtheorem{lem}[thm]{Lemma} 
\newtheorem{prop}[thm]{Proposition}
\newtheorem{claim}[thm]{Claim} 
\newtheorem{fact}[thm]{Fact}
\theoremstyle{definition} 
\newtheorem{defn}[thm]{Definition}
\theoremstyle{remark} 
\newtheorem{rem}[thm]{Remark}
\newtheorem{conj}[thm]{Conjecture}
\newtheorem{prob}[thm]{Problem}
\newtheorem{sample}[thm]{Example} \numberwithin{equation}{section}
    {\medskip\begingroup\leftskip 0.5cm\rightskip 0.5cm\noindent\begin{small}{\bf Remark.}}
    {\end{small}\par\endgroup}
{\begin{list}{$\bullet$}
 {\settowidth{\labelwidth}{\textsf{$\bullet$}} \setlength{\leftmargin}{10pt}}}
{\end{list}}
\newcounter{ssample}[section]
\newcounter{insertcount}
\noindent\begin{small}{\color{blue} \stepcounter{insertcount}
          {
            \bf Insert \arabic{insertcount}. #1.}
            \addcontentsline{toc}{subsection}{{\ \ \small  Insert \arabic{insertcount}: #1}}
               \leavevmode  } 
\newcommand{\mrmk}[1]
{{\tiny$^{\spadesuit}$}\marginpar{\fbox{\footnotesize #1}}}
\def\strutdepth{\dp\strutbox}%
\def\marginalnote#1{\strut\vadjust{\kern-\strutdepth\specialnote{#1}}}%
\def\specialnote#1{\vtop to \strutdepth{\baselineskip%
\strutdepth\vss\llap{\hbox{\scriptsize \bf #1}}\null}}%
\newcommand{\RR}{\mathbb{R}}
\newcommand{\UU}{\mathbb{M}}
\def\op{\circ} 
\def\Aut{\operatorname{Aut}}
\def\twr{\operatorname{twr}}
\def\rd{\operatorname{rd}}
\def\Ind{\setbox0=\hbox{$x$}\kern\wd0\hbox to 0pt{\hss$\mid$\hss} \lower.9\ht0\hbox to 0pt{\hss$\smile$\hss}\kern\wd0} 
\def\Notind{\setbox0=\hbox{$x$}\kern\wd0\hbox to 0pt{\mathchardef \nn=12854\hss$\nn$\kern1.4\wd0\hss}\hbox to 0pt{\hss$\mid$\hss}\lower.9\ht0 \hbox to 0pt{\hss$\smile$\hss}\kern\wd0}
\newcommand{\QQ}{\mathbb{Q}}
\def\CF{\mathcal F}
\newcommand{\ZZ}{\mathbb{Z}}
\newcommand{\CR}{\mathcal R} 
\newcommand{\NN}{\mathbb N}
\newcommand{\la}{\langle}
\newcommand{\ra}{\rangle}
\def\tp{\mathrm{tp}}
\def\dcl{\mathrm{dcl}}
\def\CL{\mathcal{L}}
\def\CG{\mathcal G}
\gdef\CM{\mathcal{M}}
\def\CM{\mathcal{M}}
\def\Th{Th}
\def\UU{\mathbb{M}}
\newcommand*\bbar[1]{%
  \hbox{%
    \vbox{%
      \hrule height 0.5pt 
      \kern0.5ex
      \hbox{%
        \kern-0.1em
        \ensuremath{#1}%
        \kern-0.1em
      }%
    }%
  }%
}
\gdef\CK{\mathcal{K}}
\gdef\Rexp{\RR_{\mathrm{exp}}}
\def\CM{\mathcal{M}}
\def\UU{\mathbb{M}}
\gdef\CM{\mathcal{M}}
\gdef\twr{\operatorname{twr}}
\gdef\op{\operatorname{op}}
\DeclareRobustCommand{\cev}[1]{%
  \mathpalette\do@cev{#1}%
}
\newcommand{\do@cev}[2]{%
  \fix@cev{#1}{+}%
  \reflectbox{$\m@th#1\vec{\reflectbox{$\fix@cev{#1}{-}\m@th#1#2\fix@cev{#1}{+}$}}$}%
  \fix@cev{#1}{-}%
}
\newcommand{\fix@cev}[2]{%
  \ifx#1\displaystyle
    \mkern#23mu
  \else
    \ifx#1\textstyle
      \mkern#23mu
    \else
      \ifx#1\scriptstyle
        \mkern#22mu
      \else
        \mkern#22mu
      \fi
    \fi
  \fi
}
\title{Ramsey growth in some NIP structures}
\author[Chernikov]{Artem Chernikov} \address{Department of Mathematics, 
University of California Los Angeles,
Los Angeles, CA 90095-1555} \email{chernikov@math.ucla.edu}
\author[Starchenko]{Sergei Starchenko} \address{Department of Mathematics, University of Notre Dame, Notre Dame,
  IN 46556} \email{Starchenko.1@nd.edu}
\author[Thomas]{Margaret E. M. Thomas} \address{Zukunftskolleg, Department of Mathematics and Statistics, University of Konstanz, Box 216, 78457 Konstanz,
Germany} \email{margaret.thomas@uni-konstanz.de}
\begin{document}

{\abstract We investigate bounds in Ramsey's theorem for relations definable in NIP structures. Applying model-theoretic methods to finitary combinatorics, we generalize a theorem of Bukh and Matousek \cite{bukh2014erdHos} from the semialgebraic case to arbitrary polynomially bounded $o$-minimal expansions of $\mathbb{R}$, and show that it doesn't hold in $\mathbb{R}_{\exp}$. This provides a new combinatorial characterization of polynomial boundedness for $o$-minimal structures. We also prove an analog for relations definable in $P$-minimal structures, in particular for the field of the $p$-adics. Generalizing \cite{conlon2014ramsey}, we show that in distal structures the upper bound for $k$-ary definable relations is given by the exponential tower of height $k-1$.}

\subjclass[2010]{Primary 03C45, 05C35, 05D10, 05C25} 

\maketitle

\section{Introduction}

We recall a fundamental theorem of Ramsey. Let $X$ be a set and let $E \subseteq X^k$  be a $k$-ary  relation on $X$. We say that a sequence $(a_i : 1\leq i \leq m)$ of elements in $X$ with $m \geq k$ is \emph{$E$-indiscernible} (also called ``\emph{$E$-homogeneous}'' in the literature) if either $E$ holds on all $k$-tuples $(a_{i_1}, \ldots, a_{i_k})$ with $1 \leq i_1 < \ldots < i_k \leq m$, or $E$ doesn't hold on any $k$-tuple $(a_{i_1}, \ldots, a_{i_k})$ with $1 \leq i_1 < \ldots < i_k \leq m$.

\begin{fact}[Ramsey \cite{ramsey1930problem}]
	For every $k, n \in \mathbb{N} = \{0, 1, \ldots \}$ there is some number $N \in \mathbb{N}$ such that if $X$ is a set and $E \subseteq X^k$ is a $k$-ary relation on $X$,  then every sequence of elements of $X$  of length $N$ contains an $E$-indiscernible subsequence of length $n$.
	
	We denote the smallest such $N$ by $R_k(n)$.
\end{fact}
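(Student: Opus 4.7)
The plan is to prove Ramsey's theorem by induction on the arity $k$.

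The base case $k=1$ is an immediate consequence of the pigeonhole principle: in any sequence of length $2n-1$ either $n$ of its terms lie in the unary relation $E$ or $n$ of them lie outside of $E$, so one may take $R_1(n) = 2n-1$.

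For the inductive step, assume $R_{k-1}(m)$ exists for all $m$. I would carry out the standard iterative extraction argument. Given a sufficiently long sequence $(b_1, \ldots, b_N)$, set $a_1 := b_1$, consider the induced $(k-1)$-ary relation $E_{a_1}(x_2, \ldots, x_k) := E(a_1, x_2, \ldots, x_k)$ on the remaining terms, and use the inductive hypothesis to extract an $E_{a_1}$-indiscernible subsequence of some prescribed length $N_1$. Record as the ``color'' of $a_1$ whether $E_{a_1}$ is identically true or identically false on this subsequence. Now repeat: let $a_2$ be the first term of the extracted subsequence, pass within it to an $E_{a_2}$-indiscernible subsequence of length $N_2$, and so on, producing a sequence $a_1, a_2, \ldots, a_M$ together with a $\{0,1\}$-coloring. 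The crucial observation is that for any $i_1 < i_2 < \ldots < i_k \leq M$, the value of $E(a_{i_1}, a_{i_2}, \ldots, a_{i_k})$ is determined by the color of $a_{i_1}$, because $a_{i_2}, \ldots, a_{i_k}$ all lie in the subsequence that was made $E_{a_{i_1}}$-indiscernible at step $i_1$. A final application of pigeonhole on the $M$ colors then extracts an $E$-indiscernible subsequence of length $\lceil M/2 \rceil$, so choosing $M = 2n-1$ and selecting each $N_j$ large enough via the inductive hypothesis to enable the next extraction completes the proof.

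The main obstacle is bookkeeping the recursion, not any conceptual difficulty: starting from the terminal requirement $N_M \geq 1$ and unwinding the inequalities $N_{j-1} \geq R_{k-1}(N_j)$, one obtains an upper bound on $R_k(n)$ that iterates $R_{k-1}$ a linear-in-$n$ number of times. This produces the classical tower-of-exponentials bound of height roughly $k-1$, which is precisely the benchmark against which the paper measures Ramsey growth inside NIP, $o$-minimal, $P$-minimal, and distal structures.
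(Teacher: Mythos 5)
The paper does not prove this fact --- it simply cites Ramsey \cite{ramsey1930problem} --- so there is nothing internal to compare against. Your induction on the arity $k$ is a correct proof that $R_k(n)$ is finite: the nesting of the extracted subsequences guarantees that $E(a_{i_1},\dots,a_{i_k})$ equals the recorded color of $a_{i_1}$, and a pigeonhole on the colors finishes; the only small wrinkle, that the color of $a_j$ is defined only once the tail at step $j$ has length at least $k-1$, is absorbed into the choice of the $N_j$'s and does not affect correctness.

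Your closing remark about the bound, however, is off on two counts. First, the classical upper bound for general $k$-ary relations is $\twr_k(c'n)$, a tower of height $k$ (Fact~\ref{fac: general Ramsey bounds}(3)); the height-$(k-1)$ tower is the \emph{improved} bound obtained in the semialgebraic case (Fact~\ref{fac: semialgebraic tower}) and, more generally, for reducts of distal structures (the corollary following Proposition~\ref{prop: sym EH implies EH}). Second, and more substantively, the recursion $N_{j-1}\geq R_{k-1}(N_j)+1$ iterated $M\approx 2n$ times does not yield a tower of fixed height. Already for $k=3$, since $R_2$ grows exponentially, the $2n$-fold iterate $N_0$ is a tower of $2$'s of height roughly $2n$, far above the benchmark $2^{2^{c'n}}$. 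The expensive move in your argument is insisting on a fully $E_{a_j}$-indiscernible tail at every stage. The classical Erd\H{o}s--Rado refinement instead only partitions the current tail according to the finitely many $E$-patterns it exhibits against the $\binom{j}{k-1}$ many $(k-1)$-tuples drawn from the already-chosen points --- a per-step loss of $2^{\binom{j}{k-1}}$, single-exponential in total --- and then applies $R_{k-1}$ \emph{once} at the very end to the induced $(k-1)$-ary coloring of the prehomogeneous sequence. That is what produces $\twr_k(c'n)$, and it is exactly the strategy that the paper's Theorem~\ref{thm: stepping down NIP} sharpens in the NIP setting by replacing the per-step loss $2^{\binom{j}{k-1}}$ with a polynomial one via Fact~\ref{fac: PolyTypesNIP}.
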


Establishing exact bounds for the asymptotics of $R_k(n)$ is one of the central open problems in combinatorics, even in the case $k=2$. We summarize briefly some of the known results.

\begin{fact} \label{fac: general Ramsey bounds}
	\begin{enumerate}
		\item \cite{erdos1947some, erdos1935combinatorial} $2^{\frac{n}{2}} < R_2(n) < 2^{2n}$ for all $n>2$.
		\item \cite{erdHos1965partition, erdos1952combinatorial} There are positive constants $c$ and $c'$ such that $2^{cn^2} < R_3(n) < 2^{2^{c'n}}$ for all sufficiently large $n$.
		\item \cite{graham1990ramsey, conlon2010hypergraph} For each $k \geq 3$ there are positive constants $c,c'$ such that $\twr_{k-1}(cn^2) \leq R_k(n) \leq \twr_{k}(c'n)$ for all sufficiently large $n$, where the tower function $\twr_k(n)$ is defined recursively by $\twr_1(n) = n$ and $\twr_{i+1}(n) = 2^{\twr_i(n)}$.
	\end{enumerate}
\end{fact}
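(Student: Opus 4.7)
The plan is to treat each of the three items of Fact~\ref{fac: general Ramsey bounds} by the standard combinatorial method associated with its citation; none of these are results of the present paper, so I would merely be recapitulating classical arguments. For (1), the upper bound $R_2(n) < 2^{2n}$ follows from the Erdős--Szekeres recurrence $R_2(n) \leq 2 R_2(n-1)$: in any $2$-colored $K_{2 R_2(n-1)}$, fix a vertex $v$; by pigeonhole it has at least $R_2(n-1)$ neighbors in a single color, and recursion inside that monochromatic neighborhood extends via $v$ to an $n$-clique. For the lower bound $R_2(n) > 2^{n/2}$, I would use Erdős' probabilistic argument: the expected number of monochromatic $K_n$'s in a uniformly random $2$-coloring of $K_N$ is $\binom{N}{n}\, 2^{\,1-\binom{n}{2}}$, which is less than $1$ when $N = \lfloor 2^{n/2} \rfloor$, so some coloring avoids all monochromatic $K_n$'s.

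For the upper bounds in (2) and (3), I would iterate the Erdős--Rado hypergraph recurrence: fixing a vertex $v$ in a $2$-coloring of the $k$-subsets of $[N]$ induces a $2$-coloring of the $(k-1)$-subsets of its link, yielding roughly $R_k(n) \leq R_{k-1}(R_k(n-1)) + 1$. Unwinding this recursion to depth $k-1$ produces an exponential tower, and the refinement in \cite{conlon2010hypergraph} shaves off one level to give $R_k(n) \leq \twr_k(c'n)$; specializing to $k=3$ yields $R_3(n) \leq 2^{2^{c'n}}$.

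For the lower bounds in (2) and (3), I would use the Erdős--Hajnal stepping-up lemma: given a $2$-coloring $\chi$ of $[N]^{(k-1)}$ with no monochromatic $n$-clique, identify $[2^N]$ with $\{0,1\}^N$ and define a coloring $\chi'$ of $[2^N]^{(k)}$ on a set $a_1 < \dots < a_k$ as a function of $\chi(\delta_1,\dots,\delta_{k-1})$, where $\delta_i$ is the coordinate at which $a_i$ and $a_{i+1}$ first differ, together with bookkeeping that depends on the order-type of $(\delta_1,\dots,\delta_{k-1})$. The key verification is that any large $\chi'$-monochromatic clique produces a comparably large $\chi$-monochromatic clique; iterating from the base bound in (1) then gives $R_k(n) \geq \twr_{k-1}(cn^2)$. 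The hard part will be the case analysis in the stepping-up step: monotone versus non-monotone order-types of the sequence $(\delta_i)$ must be handled separately, using an auxiliary coloring to rule out long monotone subsequences, and one must check that no configuration produces an unexpectedly large $\chi'$-monochromatic clique.
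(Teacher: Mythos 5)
This statement is labeled a \emph{Fact} in the paper and is cited from the literature (Erd\H{o}s, Erd\H{o}s--Szekeres, Erd\H{o}s--Rado, Graham--Rothschild--Spencer, Conlon--Fox--Sudakov); the paper itself gives no proof, so there is no in-paper argument to compare against, only the question of whether your recapitulation of the classical proofs is sound.

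There is one genuine slip in your sketch of (1). The recursion $R_2(n) \leq 2\,R_2(n-1)$ is false: $R_2(3) = 6 > 4 = 2\,R_2(2)$. The problem is in the extension step. After fixing $v$ in $K_N$ with $N = 2R_2(n-1)$, pigeonhole does give $\geq R_2(n-1)$ neighbors of $v$ joined to $v$ by a single color, say red, and inside that set you find a monochromatic $K_{n-1}$. But if that $K_{n-1}$ is \emph{blue}, you cannot extend it via $v$ (the edges to $v$ are red), and you are left with only a blue $K_{n-1}$, not $K_n$. The correct classical argument uses the off-diagonal numbers: $R(s,t) \leq R(s-1,t) + R(s,t-1)$ (here you split the neighbors of $v$ into the red-neighborhood of size $\geq R(s-1,t)$ or the blue-neighborhood of size $\geq R(s,t-1)$, and in either case the extension via $v$ is consistent). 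Together with the base case $R(s,2) = s$ this yields $R(n,n) \leq \binom{2n-2}{n-1} < 4^{n-1} < 2^{2n}$, which is what Erd\H{o}s--Szekeres actually prove.

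Your probabilistic lower bound for (1) is correct as written. For (2)--(3), your outline of the Erd\H{o}s--Rado upper bound is on the right track but a bit loose: the argument does not merely pass to ``the link of a vertex''; one recursively builds a long sequence $v_1, v_2, \ldots$ together with a shrinking end-segment so that the color of each $k$-set drawn from $\{v_1, v_2, \ldots\}$ depends only on its first $k-1$ elements, and this reduction is what allows the induction on $k$ to produce the tower. Your description of the Erd\H{o}s--Hajnal stepping-up lemma for the lower bounds is accurate in outline (the $\delta_i$ are the positions of first disagreement, and the case analysis on monotone versus non-monotone patterns of the $\delta_i$ is indeed the crux), and it is worth noting that the paper itself carries out a definable variant of exactly this stepping-up argument in its Section 4 (Theorem~\ref{thm:main1}), so that part of your outline does connect to material the paper proves, albeit in a modified form.
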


Recently, this question was investigated in the context of semialgebraic relations, where stronger bounds were obtained. Recall that a set $A \subseteq \mathbb{R}^d$ is \emph{semialgebraic} if it is given by a finite Boolean combination of sets of the form $\{ x \in \mathbb{R}^d : f(x) \geq 0\}$, where $f(x)$ is a polynomial in $d$ variables with coefficients in $\RR$. We say that a semialgebraic set $A$ has \emph{description complexity} at most $t$ if $d \leq t$ and $A$ can be written as  a Boolean combination of such sets involving at most $t$ different polynomials, each of degree at most $t$.

\begin{defn} \label{def: semialg R}

Let $E \subseteq (\mathbb{R}^d )^k$ be a $k$-ary semialgebraic relation on $\mathbb{R}^d$. For $n\in \NN$, we let $R_E(n)$ be the smallest natural number $N$ such that if $(a_i : 1\leq i \leq m), a_i \in \mathbb{R}^d$, is a sequence of length $m \geq N$, then it contains an \emph{$E$-indiscernible} subsequence of length $n$.

Let $R_{k}^{d,t}(n)$ be the maximum of $R_{E}(n)$, where $E$ varies over all $k$-ary semialgebraic relations on $\mathbb{R}^d$ of description complexity at most $t$.
\end{defn}

The case of binary relations ($k=2$) is addressed in the following theorem, which shows that $R_2^{d,t}(n)$ can be bounded by a polynomial in $n$ --- as opposed to the necessarily exponential bound in the general case (Fact \ref{fac: general Ramsey bounds}(1)). The following is proved in \cite[Theorem 1.2]{alon2005crossing} (it is only stated there for \emph{symmetric} semialgebraic relations; the result for arbitrary semialgebraic relations follows easily from the symmetric case using that the lexicographic ordering on $\mathbb{R}^d$ is semialgebraic --- see the discussion after Definition \ref{def: strongEH}).

\begin{fact} \cite[Theorem 1.2]{alon2005crossing} \label{fac: Alon et al}
	For any $d,t$ there is some $c = c(d,t)$ such that $R_2^{d,t}(n) \leq n^c$ for all sufficiently large $n$.
\end{fact}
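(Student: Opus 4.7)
My plan is to prove this in two steps, as suggested in the excerpt: first reduce to the symmetric semialgebraic case, and then sketch the argument of \cite{alon2005crossing} for that case.

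For the reduction, let $<_{\text{lex}}$ denote the lexicographic order on $\RR^d$, which is semialgebraic of complexity depending only on $d$. Given any semialgebraic binary $E$ on $\RR^d$ of description complexity at most $t$, define the symmetric relation
\[
E^\sharp(x,y) \;:\Longleftrightarrow\; \bigl(x <_{\text{lex}} y \wedge E(x,y)\bigr) \vee \bigl(y <_{\text{lex}} x \wedge E(y,x)\bigr) \vee \bigl(x=y \wedge E(x,x)\bigr),
\]
which has description complexity at most some $t' = t'(d,t)$. Given an arbitrary sequence $(a_i)_{i=1}^N$ in $\RR^d$, apply the Erd\H{o}s--Szekeres theorem with respect to $<_{\text{lex}}$ to extract a lex-monotone subsequence of length $\geq \lfloor \sqrt{N}\rfloor$. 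On a lex-increasing subsequence, $E$-indiscernibility and $E^\sharp$-indiscernibility coincide by construction; for a lex-decreasing one, an analogous symmetric relation of bounded complexity (using $>_{\text{lex}}$ in place of $<_{\text{lex}}$) captures $E$-indiscernibility along the sequence order. Hence a polynomial symmetric bound $R_2^{d,t',\text{sym}}(n) \leq n^{c'}$ yields $R_2^{d,t}(n) \leq n^{2c'}$.

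For the symmetric case, the approach is a recursive cutting argument. The key geometric input is a same-type-style partition of bounded size: there is $m = m(d,t)$ such that any finite $P \subseteq \RR^d$ admits a partition $P = P_1 \sqcup \dots \sqcup P_m$ with the property that, for suitable pairs of indices $i,j$, the relation $E$ is constant on $P_i \times P_j$. This is obtained by regarding the slices $\{E(\cdot,y) : y \in P\}$ as a finite family of semialgebraic sets of bounded complexity (hence of bounded VC-dimension) and applying cuttings and vertical decomposition. A pigeonhole step then isolates a large block; iterating produces a deeply nested block structure on which, after a bounded-choice step at each level, one extracts an $E$- or $\lnot E$-indiscernible subsequence of length $n$, giving $N \leq n^c$ for some $c = c(d,t)$.

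The main obstacle is the quantitative combinatorial geometry in the symmetric case: arranging that the partition size $m$ depends only on $(d,t)$, and that a uniform block of size $\Omega(N/m)$ can always be found. This is the heart of the cited theorem and rests on polynomial partitioning for semialgebraic sets of bounded complexity. By contrast, the reduction to the symmetric case and the final iterative extraction are essentially routine pigeonholing and an application of Erd\H{o}s--Szekeres.
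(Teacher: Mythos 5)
This statement is a cited fact, \cite[Theorem 1.2]{alon2005crossing}, and the paper offers no proof of it; the only original content is the parenthetical note that the asymmetric case ``follows easily from the symmetric case using that the lexicographic ordering on $\mathbb{R}^d$ is semialgebraic.'' Your reduction via $E^\sharp$ and Erd\H{o}s--Szekeres is precisely the argument that parenthetical has in mind, and it is correct in outline. It is worth noting, however, that the paper also supplies a \emph{different} reduction in Proposition~\ref{prop: sym EH implies EH}: rather than symmetrizing with a definable linear order, it splits $E$ into $E_0 = E\vee E^{\mathrm{op}}$ and $E_1 = E\wedge E^{\mathrm{op}}$, reduces to the case of a tournament, and uses the strong EH property to extract a large $E$-linearly-ordered subset before invoking Erd\H{o}s--Szekeres. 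That route is less elementary but strictly more general: it works in any structure satisfying the strong EH property (in particular in arbitrary distal structures), whereas your lex-order trick requires a definable linear order on $M^d$ of bounded complexity. For $\mathbb{R}^d$ both work, and yours is shorter.

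Two small points. First, Erd\H{o}s--Szekeres in the form of Fact~\ref{fac: Erdos-Szekeres} requires pairwise distinct entries, so you should first dispose of repeats: either some value occurs $\geq\sqrt N$ times, in which case a constant subsequence is trivially $E$-indiscernible, or one passes to $\geq\sqrt N$ distinct entries; this costs only another square root and does not affect the polynomial bound. Second, your sketch of the symmetric case (slices of bounded VC-dimension, cuttings, iterated block extraction) is the right flavour for \cite{alon2005crossing}, but the historical mechanism there is the cutting lemma together with Veronese linearization rather than polynomial partitioning, which postdates that paper; since you explicitly flag this step as the cited black box, this is only a matter of attribution. In short: your proposal is correct, matches the reduction the paper gestures at, and differs from the paper's fully-written reduction (Proposition~\ref{prop: sym EH implies EH}) by trading generality for simplicity.
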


Based on this fact, \cite{conlon2014ramsey} addresses the case of general $k$, establishing that $R_k^{d,t}(n)$ can be bounded from above by an exponential tower of height $k-1$ (as opposed to $k$ for general relations; Fact \ref{fac: general Ramsey bounds}(3)).

\begin{fact} \label{fac: semialgebraic tower}
 \cite{conlon2014ramsey} For any $k \geq 2$ and $d,t \geq 1$ there is some $c = c(k,d,t)$ such that $R_k^{d,t}(n) \leq \twr_{k-1}(n^c)$ for all sufficiently large $n$.
\end{fact}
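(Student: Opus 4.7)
The plan is to proceed by induction on $k \geq 2$. The base case $k = 2$ is precisely Fact~\ref{fac: Alon et al}: the polynomial bound $R_2^{d,t}(n) \leq n^{c(d,t)}$ agrees with $\twr_1(n^c)$, so the statement holds.

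For the inductive step, assume the bound is established for $(k-1)$-ary semialgebraic relations of bounded description complexity. Let $E \subseteq (\RR^d)^k$ be semialgebraic of complexity at most $t$ and let $(a_i : 1 \leq i \leq m)$ be a sequence in $\RR^d$ with $m = \twr_{k-1}(n^c)$, where $c$ is to be specified. The strategy is to reduce the arity from $k$ to $k-1$ at the cost of \emph{exactly one} extra exponential. The main combinatorial input is a semialgebraic ``same-type'' partition lemma in the spirit of Barany--Valtr and its higher-dimensional extension due to Barany--Matousek--Pach: there is a constant $C_0 = C_0(k,d,t)$ such that for any $k$ finite subsets $P_1, \ldots, P_k \subseteq \RR^d$, one can extract $Q_j \subseteq P_j$ with $|Q_j| \geq |P_j|/C_0$ on which $E$ takes a single truth value on every transversal tuple in $Q_1 \times \cdots \times Q_k$. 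Iteratively applying this partition lemma to nested blocks of the sequence forces $E(a_{i_1}, \ldots, a_{i_k})$ to depend only on part of its input, thereby collapsing to a $(k-1)$-ary semialgebraic relation of bounded complexity.

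An alternative route, leaning more directly on Fact~\ref{fac: Alon et al}, is to use the polynomial binary bound to control a ``section-type'' coloring $\chi(i,j)$ recording the type of the pair $(a_i, a_j)$ with respect to the uniformly definable family of sections $\{E(x,y,z_3,\ldots,z_k)\}$. Since the equivalence relation ``$(a_i,a_j)$ and $(a_{i'},a_{j'})$ induce the same section'' is itself semialgebraic of bounded complexity, Fact~\ref{fac: Alon et al} yields a long $\chi$-monochromatic subsequence. On this subsequence, $E$ factors through a $(k-1)$-ary semialgebraic relation of bounded complexity, and the inductive hypothesis produces the desired $E$-indiscernible subsequence of length $n$.

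\textbf{Main obstacle.} The delicate point is a tight bookkeeping of how losses compose through the induction: the polynomial loss from the binary step (Fact~\ref{fac: Alon et al}) and the constant-fraction loss from the same-type partition must combine with the inductive $\twr_{k-2}(\cdot)$ bound to yield $\twr_{k-1}(n^c)$ for some constant $c = c(k,d,t)$, i.e.\ the arity-reduction step must cost exactly one exponential and not more. It is precisely the \emph{polynomial} (rather than exponential) nature of Fact~\ref{fac: Alon et al} in the binary case that makes this possible and yields the tower height $k-1$ rather than $k$; any exponential loss at a single level would propagate into an additional tower height.
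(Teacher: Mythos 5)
Your high-level plan — induction on $k$ with Fact~\ref{fac: Alon et al} as the base case, and one extra exponential per unit increase in arity — is the right strategy, and you correctly identify the key obstacle: the arity-reduction step must cost exactly one exponential. But neither of your two routes actually closes the argument. Route~(b) has a genuine gap: the ``section-type coloring'' $\chi$ takes values in a continuum of possible sections, so it is not a finite coloring that Ramsey can monochromatize; the relation you propose to feed into Fact~\ref{fac: Alon et al} (``$(a_i,a_j)$ and $(a_{i'},a_{j'})$ induce the same section'') is a relation between \emph{pairs of pairs}, and its homogeneous outcome could just as well be ``all sections pairwise distinct,'' which is useless — there is no mechanism forcing the all-same-section case. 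Route~(a) is closer in spirit to the correct argument but remains a gesture: you never specify what ``iteratively applying the same-type lemma to nested blocks'' concretely constructs, nor why it outputs a subsequence of length roughly $\log N$ on which $E$ becomes effectively $(k-1)$-ary of bounded complexity.

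The argument that does the work (see Theorem~\ref{thm: stepping down NIP}, generalizing \cite[Theorem~2.2]{conlon2014ramsey} to NIP) builds a subsequence $\vec b = (b_1,\dotsc,b_{m+1})$ one element at a time. At stage $r$, take $b_{r+1}$ to be the first remaining element and then shrink the remaining pool to those elements realizing a single $\theta(x_k;u)$-type over the set of $(k-1)$-tuples from $(b_1,\dotsc,b_{r+1})$, where $\theta$ is $\varphi$ with the last coordinate isolated. The number of such types is polynomial in $r$ (by the Milnor--Thom bound in the semialgebraic case, or Fact~\ref{fac: PolyTypesNIP} in general NIP), so the accumulated loss after $m$ steps is $2^{O(m\log m)}$, giving $m \gtrsim \log N / \log\log N$. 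On the resulting $\vec b$, the truth of $E(b_{i_1},\dotsc,b_{i_{k-1}},b_i)$ is independent of $i > i_{k-1}$, so one may substitute the fixed element $b_{m+1}$ for the last coordinate and obtain a genuinely $(k-1)$-ary relation $\psi$ (with $b_{m+1}$ as a parameter) to which the inductive hypothesis applies; appending $b_{m+1}$ to the $\psi$-indiscernible subsequence gives the required $E$-indiscernible one. It is this polynomial-per-step pigeonhole shrinkage on types — not a constant-fraction same-type loss applied a bounded number of times, nor the binary bound applied to a derived ``same section'' relation — that yields the single exponential per arity step and hence the $\twr_{k-1}$ bound.
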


In addition, matching lower bounds for semialgebraic relations were obtained in \cite{conlon2014ramsey} and subsequently refined in \cite{elias2014lower}.

\begin{fact} \label{fac: lower bounds semialg}
\begin{enumerate}

	\item \cite{conlon2014ramsey} For every $k\geq 4$, there exists $d = d(k)$, $t = t(k)$, $c' = c'(k)$ and a $k$-ary semialgebraic relation $E$ on $\RR^d$ of description complexity $\leq t$ such that $R_E(n) \geq \twr_{k-1}(c' n)$ for all sufficiently large $n$.
	\item \cite{elias2014lower} In (1), one can take $d = k-3$.
\end{enumerate}
	
\end{fact}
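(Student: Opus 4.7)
The natural approach is the classical Erdős--Hajnal--Rado stepping-up construction, adapted so that each iteration is realized by a semialgebraic relation of controlled description complexity.

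First I would fix a base case. For $k=3$, one has the lower bound $R_3(n) \geq 2^{cn^2}$ of Fact \ref{fac: general Ramsey bounds}(2), obtained from an explicit two-coloring of triples; I would rewrite that coloring as a semialgebraic relation on some $\RR^{d_0}$ of bounded description complexity (the coloring depends only on arithmetic comparisons between coordinates, so this is plausible once one chooses the right embedding of the vertex set). The induction then starts from $k=3$, or alternatively from $k=4$ after one semialgebraic application of stepping-up.

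The heart of the argument is a \emph{semialgebraic stepping-up lemma}: given a $k$-ary semialgebraic $E \subseteq (\RR^d)^k$ of description complexity $\leq t$ together with a sequence of length $N$ in $\RR^d$ witnessing $R_E(n) \geq N$, produce a $(k{+}1)$-ary semialgebraic $E' \subseteq (\RR^{d'})^{k+1}$ of description complexity $\leq t'$, with $d',t'$ depending only on $d,t,k$, together with a sequence of length $2^N$ witnessing $R_{E'}(2n + O(1)) \geq 2^N$. Classically one identifies the new vertex set with $\{0,1\}^N$ ordered lexicographically; for an increasing $(k{+}1)$-tuple $\epsilon^{(1)} < \dots < \epsilon^{(k+1)}$ one looks at the first differing coordinates $\delta_i = \min\{j : \epsilon^{(i)}_j \neq \epsilon^{(i+1)}_j\}$, and sets $E'$ according to a fixed rule depending on the shape of $(\delta_1,\dots,\delta_k)$, using $E$ on the $\delta_i$ when that sequence is monotone. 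To implement this semialgebraically I would embed $\{0,1\}^N$ into $\RR$ via, say, binary expansions with a fixed base, and encode the $\delta_i$ using auxiliary coordinates drawn from the base sequence; the ``first differing coordinate'' operation becomes expressible through the semialgebraic lexicographic order on $\RR$, and the definition of $E'$ reduces to a Boolean combination of a single instance of $E$ applied to projections together with finitely many order and equality comparisons between coordinates.

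The main obstacle will be keeping $t'$ bounded uniformly in the iteration depth, since the construction must be applied $k-3$ (or $k-4$) times without the description complexity blowing up. This forces the combinatorial case analysis on the shape of $(\delta_1,\dots,\delta_k)$ to be arranged so that its semialgebraic encoding uses polynomials whose degree and number grow only through the underlying $E$; careful bookkeeping should show that each application of the lemma adds only a constant depending on $k$ to $t'$ and only $O(1)$ to the ambient dimension. Iterating then yields (1). For (2), one must design the embedding so that each stepping-up raises the ambient dimension by exactly $1$, for instance by absorbing the new ``level'' coordinate into a single additional real; this refinement, due to \cite{elias2014lower}, gives $d = k-3$.
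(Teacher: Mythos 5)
This statement is cited from \cite{conlon2014ramsey} and \cite{elias2014lower}; the present paper does not prove it, though Section~\ref{sec: Counterex in Rexp} reproduces the same stepping-up construction in $\RR_{\exp}$. Your broad outline --- semialgebraic Erd\H{o}s--Hajnal stepping-up, with ambient dimension increasing per step --- is the right one, but there are two genuine gaps.

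First, the \emph{base case}. You propose to start from $k=3$ and the lower bound $R_3(n) \geq 2^{cn^2}$ of Fact~\ref{fac: general Ramsey bounds}(2), and then ``rewrite that coloring as a semialgebraic relation of bounded description complexity.'' That bound comes from a probabilistic/number-theoretic coloring which is not known to be realizable semialgebraically with bounded complexity, and no such realization is produced. The actual base case used in \cite{conlon2014ramsey} (mirrored in Section~\ref{sec:base-case-k=3}) is the explicit $3$-ary relation $x_1 + x_3 - 2x_2 \geq 0$ evaluated on $1,2,\dotsc,2^n$, which is genuinely semialgebraic but only gives $2^{\Omega(n)}$, not $2^{\Omega(n^2)}$; this is exactly why the Fact is stated for $k \geq 4$ with $\twr_{k-1}(c'n)$ rather than for $k\geq 3$ with a stronger bound.

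Second, and more serious, is the mechanism for encoding $\delta_i$. You propose to embed $\{0,1\}^N$ into $\RR$ via a base-$T$ expansion and then claim that ``the first differing coordinate operation becomes expressible through the semialgebraic lexicographic order.'' This is precisely what cannot be done semialgebraically. Once two strings $b,c$ are collapsed to reals $\sum b(i)T^{a_i}$, $\sum c(i)T^{a_i}$, recovering the position of their leading difference from $b-c$ requires a logarithm: compare Section~\ref{sec:definability} of the paper, where $\bar\delta_T(b,c) = \log_T(b-c)$ is used, and this works \emph{only} because $\RR_{\exp}$ (not $\bar{\RR}$) is the ambient structure. The entire reason $d$ grows with $k$ in the semialgebraic constructions of \cite{conlon2014ramsey} and \cite{elias2014lower} is to carry the information that a logarithm would otherwise extract in auxiliary coordinates; your one-paragraph ``auxiliary coordinates drawn from the base sequence'' is not a construction but a placeholder for exactly the nontrivial part. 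As written, the proposal conflates the $\RR_{\exp}$-in-one-dimension approach with the semialgebraic-in-growing-dimension approach, and the conflation is where the proof would break.
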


The dependence of the dimension $d$ on the arity $k$ of the relation $E$ in Fact \ref{fac: lower bounds semialg} is unavoidable, due to the following theorem of Bukh and Matousek.

\begin{fact} \cite{bukh2014erdHos} \label{fac: BukhMat}
	For every $k \in \NN$ and every $k$-ary semialgebraic relation $E$ on $\RR$ there is some $c = c(E)$ such that $R_E(n) \leq 2^{2^{cn}}$ for all sufficiently large $n$.
\end{fact}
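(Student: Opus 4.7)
My plan is to proceed in three steps. First, using Fact~\ref{fac: Alon et al} applied to the binary semialgebraic relation $<$ on $\RR$, extract a strictly monotone (without loss of generality, strictly increasing) subsequence from the initial sequence, at the cost of only a polynomial factor in the length; this loss is negligible at the doubly exponential scale. So assume $a_1 < a_2 < \cdots < a_m$.

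Second, by semialgebraic cell decomposition, I would write $E$ as a Boolean combination of sign conditions on finitely many polynomials $p_1, \ldots, p_N \in \RR[x_1, \ldots, x_k]$ of degree $\leq t$, where $N$ and $t$ depend only on $E$. Producing an $E$-indiscernible subsequence then reduces to producing a single subsequence on which, for each $j$, the sign of $p_j(a_{i_1}, \ldots, a_{i_k})$ is constant across all ordered $k$-subtuples.

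The heart of the proof --- and where I expect the main obstacle to lie --- is to establish the following claim by induction on $k$: for every polynomial $p \in \RR[x_1, \ldots, x_k]$ of degree $\leq t$ and every strictly increasing sequence of length $m$ in $\RR$, there is a subsequence of length $\Omega(\log \log m)$ on which $\operatorname{sign}\bigl(p(a_{i_1}, \ldots, a_{i_k})\bigr)$ is constant across all ordered $k$-subtuples. The base case $k=1$ is immediate by pigeonhole, since $p$ has at most $t$ real zeros and hence at most $t+1$ maximal intervals of constant sign. The inductive step is where dimension one enters decisively: for any fixed value $a_j$ of the last coordinate, the specialization $p(x_1, \ldots, x_{k-1}, a_j)$ is a $(k-1)$-variate polynomial of the same degree bound, to which the inductive hypothesis applies. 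The genuine technical challenge is to carry out the extraction uniformly as $a_j$ varies; a natural route is to define an auxiliary coloring of pairs $(i,j)$ recording how $\operatorname{sign}(p)$ depends on the ``splitting'' position of the tuple, and to apply a monotone-path argument of Erd\H{o}s--Szekeres type to stabilize the behavior before invoking the inductive hypothesis. The two logarithmic losses --- one from this monotone-path extraction, one from the inner application of the inductive hypothesis --- combine to give the doubly logarithmic bound, while the fact that the ambient dimension is $1$ ensures that specialization keeps us in the realm of lower-arity polynomials of the same degree, which is what prevents a tower-type blowup.

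Finally, applying the claim to each of the $N$ polynomials $p_1, \ldots, p_N$ in turn introduces only a constant factor loss in length (since $N$ depends only on $E$), and the resulting common subsequence is $E$-indiscernible by construction. From an initial sequence of length $m \geq 2^{2^{cn}}$ one thereby extracts an $E$-indiscernible subsequence of length $\geq n$, yielding the desired bound $R_E(n) \leq 2^{2^{cn}}$ for a suitable constant $c = c(E)$.
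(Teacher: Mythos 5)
There is a genuine gap, and you have partially flagged it yourself: the inductive step of your key claim is presented as a plan, not a proof, and the plan does not explain how the $\log\log m$ bound survives as $k$ increases. On one reading, your "two logarithmic losses" per inductive step would give a tower of $\Theta(k)$ logarithms --- exactly the general Ramsey bound (Fact \ref{fac: general Ramsey bounds}(3)), not a doubly-exponential one. On the more charitable reading, where each step costs only a polynomial factor in sequence length (so $\log\log$ is preserved up to an additive constant), the burden shifts to the unstated "coloring of pairs" and "monotone-path" argument stabilizing the behavior in a \emph{single} Erd\H{o}s--Szekeres-type extraction; but the sign of $p(a_{i_1},\dotsc,a_{i_{k-1}},a_j)$ depends on the whole $(k-1)$-tuple $(a_{i_1},\dotsc,a_{i_{k-1}})$, not on a pair, so it is not clear what that coloring is or why a monotone path in it would suffice. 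The closing assertion that "dimension one prevents a tower-type blowup" is precisely the thing that must be proved; specialization keeping the degree bounded does not by itself cap the number of logarithmic extractions.

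The actual mechanism --- both in \cite{bukh2014erdHos} and in the paper's generalization (Theorem \ref{thm:main}) --- is not an induction on arity at all. The central notion is that of an \emph{$h$-growing} sequence ($a_1 \geq h$, $a_{i+1} \geq a_i^h$), and the key observation is that for $h$ large enough, depending on $E$ but \emph{not on $k$ or $n$}, any $h$-growing sequence is automatically $E$-indiscernible up to trimming a bounded number of intervals: on such a sequence the sign of a polynomial in $(a_{i_1},\dotsc,a_{i_k})$ is decided by its dominant monomial, which depends only on the order type of the indices. In the paper's model-theoretic reformulation this is Lemma \ref{lem: o-min indisc without params} together with Lemma \ref{lem: finitary shrinking}: $h$-growing sequences approximate Morley sequences of the invariant type at $+\infty$, and such Morley sequences are indiscernible by Fact \ref{fac: basic inv types}. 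This handles all arities at once, which is exactly what avoids the tower. The doubly-exponential count then comes from the purely combinatorial Fact \ref{prop:comb}: every sequence of length $2^{h^{2n}}$ contains, up to an affine map $x \mapsto A + Bx$ or inversion $x \mapsto A + B/x$ (possibly reversed), an $h$-growing subsequence of length $n$; the linear order on $\RR$ is what makes this extraction possible, and this is where ambient dimension one really enters. Your step 1 (passing to a monotone subsequence) is indeed the opening move, but what must follow is the fast-growing extraction, not an induction on the arity of polynomials.
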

That is, if we restrict to arbitrary $k$-ary semialgebraic relations on $\mathbb{R}$ (as opposed to $\mathbb{R}^d$ for some $d > 1$), then $R_E(n)$ is at most double exponential (rather than a tower of height $k-1$ as in Fact \ref{fac: semialgebraic tower}). The constant $c$ given by the proof in \cite{bukh2014erdHos} actually depends on the parameters of $E$ (and not just on its description complexity, as in Fact \ref{fac: semialgebraic tower}); however this dependence can be eliminated (see Theorem \ref{thm: BM for polybdd omin}).
\medskip

In this paper we investigate a generalization from semialgebraic relations to relations definable in more general first-order structures, and the connection between Ramsey growth for relations definable in a structure and the model-theoretic tameness conditions that this structure satisfies.

\begin{defn} \label{def: main}
Let $\mathcal{M}$ be a first-order structure in a language $\CL$ (we denote its underlying set by $M$). Let $k \geq 1$ be an integer and let $\varphi(x_1, \ldots, x_k)$ be an $\CL(M)$-formula (i.e. ~ a formula with parameters from $\CM$) with its free variables partitioned into $k$ groups of equal size, i.e. $|x_1| = \ldots = |x_k| = d$. Then $\varphi$ defines a $k$-ary relation $\varphi(M)$ on $M^d$ (a definable subset of $M^d$ in the case $k=1$), namely $\varphi(M) = \{ (a_1, \ldots, a_k) \in (M^d)^k : \mathcal{M} \models \varphi(a_1, \ldots, a_k) \}$. The case $|x_i|=1$ for all $i=1, \ldots, k$ will be referred to as ``$x_1, \ldots, x_k$ singletons''.

 We let $R_\varphi(n)$ be the smallest natural number $N$ such that any sequence $(a_i : 1\leq i \leq N), a_i \in M^d$, of length $N$ contains a \emph{$\varphi(M)$-indiscernible} subsequence of length $n$.
  
 Also, given an $\CL$-formula $\varphi(x_1, \ldots, x_k; z)$, where $|x_1| = \ldots = |x_k| = d$ and $z$ is an additional tuple of free variables, we let $R^*_{\varphi}(n) := \max \{ R_{\varphi(x_1, \ldots, x_k; b)}(n) : b \in M^{|z|} \}$ (or $\infty$ if the maximum doesn't exist).
\end{defn}

\begin{rem}
 By Tarski's quantifier elimination in the field of reals $\CM = (\mathbb{R}, <,  +, \times, 0,1)$, given a formula $\varphi(x; y)$, all sets of the form $\varphi(\mathbb{R}^{|x|}; b), b \in \mathbb{R}^{|y|}$, are semialgebraic of description complexity $\leq t$ for some $t$ depending only on $\varphi$.
 Conversely, the family of all semialgebraic subsets of $\mathbb{R}^{|x|}$ of description complexity $\leq t$ is of the form $ \{ \varphi(\mathbb{R}^{|x|}; b) : b \in \mathbb{R}^{|y|}\}$ for an appropriate choice of $\varphi(x;y)$. Hence $R^{d,t}_k$ from Definition \ref{def: semialg R} is given by  $R^*_{\varphi}$ for an appropriate $\varphi$ in the case of the field of reals.
\end{rem}

We will restrict to the case of \emph{NIP structures} (see Section \ref{sec: prelims on NIP} for the definition;  any structure which is not NIP codes arbitrary finite graphs in a definable way (see e.g. \cite[Remark 4.12]{chernikov2015regularity}), hence bounds in Fact \ref{fac: general Ramsey bounds} are optimal outside of the NIP context). First we give a brief overview of the relevant results in the model-theoretic literature indicating the relevance of NIP and its subclasses for the problem at hand.

The infinitary version of the problem of finding indiscernible subsequences was long known in model theory, under the name of the ``\emph{existence of indiscernibles}'' (starting with the work of Morley in the stable case, and later work of Shelah and others in general NIP \cite{ShelahCT, shelah1986around, shelah2014strongly, kaplan2014dependent}). 

The question of obtaining explicit bounds for $R_{\varphi}(n)$ under some model-theoretic tameness assumptions on $\mathcal{M}$ was first considered, it appears, in \cite{ensley1997ramsey}, where some quantitive improvements in the stable and NIP cases were obtained.
In the case of a stable formula $\varphi$, a polynomial upper bound was established in \cite{malliaris2014regularity}.

\begin{fact} \cite{malliaris2014regularity} \label{fac: stable EH}
Let $\varphi(x_1, \ldots, x_k; z)$ be a formula in a stable structure $\mathcal{M}$ (or just assume that $\varphi$ is a stable formula, relative to an arbitrary partition of its variables). Then there is some $c = c(\varphi)$ such that $R^{*}_{\varphi}(n) \leq n^c$ for all sufficiently large $n$.
\end{fact}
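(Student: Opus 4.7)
The plan is to apply the main combinatorial result of \cite{malliaris2014regularity}, namely a Szemer\'edi-type regularity lemma for stable binary relations. For every stable formula $\psi(x,y)$ there are constants $C,c_0$ depending only on $\psi$ such that any finite $V \subseteq M^{|x|+|y|}$ admits a partition $V = V_1 \cup \dots \cup V_t$ with $t \leq C \cdot \epsilon^{-c_0}$ on which $\psi$ is $\epsilon$-regular with every pair $V_i \times V_j$ having $\psi$-density either less than $\epsilon$ or greater than $1-\epsilon$. The critical improvement over the classical Szemer\'edi regularity lemma is that the number of parts is polynomial in $1/\epsilon$ rather than of tower type, and it is exactly this polynomial dependence that will convert into a polynomial Ramsey bound.

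I would first establish the case $k=2$. Fix a parameter $b$ and consider the stable binary formula $\varphi(x_1,x_2;b)$. Given a sequence $(a_i)_{i=1}^{N}$ in $M^d$, apply stable regularity with accuracy $\epsilon := 1/(4n)$ to the set $\{a_i : 1 \leq i \leq N\}$, producing a partition into $t \leq C \cdot n^{c_0}$ parts. By pigeonhole some part $P$ has cardinality at least $N/t$. On $P$ the density of $\varphi(\cdot,\cdot;b)$ is either at least $1-\epsilon$ or at most $\epsilon$. In the first case, greedily choose indices $i_1 < i_2 < \dots < i_n$ with $a_{i_p} \in P$ such that $\varphi(a_{i_p}, a_{i_q}; b)$ holds for all $p \neq q$: at each step at most $2\epsilon|P|$ candidates are discarded, so $n$ elements can be chosen provided $|P|(1 - 2n\epsilon) \geq n$. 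The other case is symmetric. Choosing $N \geq 4n \cdot t = O(n^{c_0+1})$ suffices, and since $C, c_0$ do not depend on the parameter $b$, this yields the claim for $R^\ast_\varphi$ in this case.

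For general $k$ I would induct on $k$, crucially using that stability of $\varphi$ is assumed with respect to every partition of its variables. View $\varphi(x_1,\dots,x_k;z)$ as the binary stable formula $\widetilde\varphi(x_1; (x_2,\dots,x_k); z)$, apply the $k=2$ argument to extract a subsequence on which the value of $\widetilde\varphi$ on pairs (singleton against $(k-1)$-tuple) is controlled, and then apply the inductive hypothesis to a suitable stable $(k-1)$-ary formula derived from $\varphi$ via this homogenization. At each recursive level the length bound incurs a polynomial factor, so the final bound $R^\ast_\varphi(n) \leq n^{c(\varphi)}$ remains polynomial, with the exponent $c(\varphi)$ growing with $k$.

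The main obstacle is the bookkeeping required to track the ordered $\varphi$-indiscernibility condition through the inductive reduction from $k$-ary to binary: stable regularity provides homogeneity on unordered pairs of parts, while $\varphi$-indiscernibility as defined concerns increasing $k$-tuples in a fixed order, and the binary collapse changes the variable partition relative to which stability must be invoked. This is resolved by combining the hypothesis that stability holds for every partition of the variables with the fact that in stable theories indiscernible sequences are automatically indiscernible as sets (the order property fails), so the various binary reductions of $\varphi$ can be assembled without incurring loss beyond the polynomial factors from stable regularity.
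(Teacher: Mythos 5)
This statement is a \emph{Fact} cited from \cite{malliaris2014regularity}; the paper gives no proof of its own, so there is nothing internal to compare against. I can nonetheless assess your proposal on its merits.

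For $k=2$ your approach matches the spirit of Malliaris--Shelah: use the stable regularity lemma, whose key feature is that the number of parts is polynomial in $1/\epsilon$, and then extract a large homogeneous subset. Two points need care here. First, the regularity lemma controls densities between pairs $V_i\times V_j$, and a statement about a \emph{single} part $P$ being $\epsilon$-homogeneous with itself needs the stronger property of $\epsilon$-excellence (each vertex sees $P$ in a nearly all-or-nothing way), which Malliaris--Shelah do provide --- but density close to $0$ or $1$ on its own does not give a greedy step losing only $O(\epsilon|P|)$ elements per iteration; you need either $\epsilon$-excellence or a Markov-type pruning to first pass to vertices of near-full degree. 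Second, the greedy step must produce an \emph{ordered} homogeneous subsequence (the definition of $E$-indiscernibility is over increasing $k$-tuples); you correctly observe that stability lets one upgrade unordered homogeneity to ordered, but the greedy as written only controls $\varphi(a_{i_p},a_{i_q};b)$ in one direction per step, so you should either greedily select by index order (which is enough for ordered indiscernibility) or first symmetrize $\varphi$, which is again a stable formula.

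The real gap is in the inductive step for $k\geq 3$. Viewing $\varphi$ as $\widetilde\varphi(x_1;(x_2,\dots,x_k);z)$ makes the right-hand side live on $(M^d)^{k-1}$, but the given sequence is a sequence in $M^d$: applying the binary case to $\widetilde\varphi$ would require a sequence (or a controlled partition) of $(k-1)$-tuples, and selecting a subsequence of singletons does not control which parts the \emph{derived} tuples of selected elements land in. Concretely, the paper's own stepping-down reduction (Theorem~\ref{thm: stepping down NIP}, which your proposal is morally paralleling) gives $R^*_\varphi(n)\leq 2^{Cm\log m}$ with $m=R^*_\psi(n-1)$, even under NIP; that is an exponential, not a polynomial, loss per level of the recursion, and no part of your argument explains why stability cuts this to a polynomial factor. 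The assertion that ``the final bound remains polynomial, with the exponent growing with $k$'' is precisely the point that requires proof, and it does not follow from stable regularity plus naive recursion. To make the higher-arity case work one has to exploit stability more substantively --- for instance via uniform definability of $\varphi$-types together with bounded ``forking'' along the sequence, or by proving directly that every sequence of length $N$ contains a $\Delta$-indiscernible subsequence of length $N^{1/c}$ for finite $\Delta$ --- neither of which appears in your outline.
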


See also \cite{chernikov2015note} for a different proof using the ``non-standard'' method. Fact \ref{fac: Alon et al} was generalized to $o$-minimal structures (with some additional topological assumptions) in \cite{basu2010combinatorial}, and to \emph{symmetric} relations in arbitrary \emph{distal structures} in the following theorem (see Definition \ref{def: distality} for the definition of distality; examples of distal structures include arbitrary  $o$-minimal structures and $P$-minimal structures, e.g. the fields $\mathbb{Q}_p$ for $p$ prime --- see Definition \ref{def: p-minimal}).

\begin{fact} (\cite[Theorem 3.6]{chernikov2015regularity} + Remark \ref{rem: strong EH implies EH}) \label{fac: EH in distal}
Let $\mathcal{M}$ be a reduct of a distal structure. Then for any formula $\varphi(x_1, x_2; z)$ with $|x_1|=|x_2|$ arbitrary and such that the relation defined by $\varphi(x_1,x_2;b)$ is \emph{symmetric} for any $b \in M^{|z|}$, there is some $c = c(\varphi)$ such that $R^*_{\varphi} (n) \leq n^c$ for all sufficiently large $n$.
\end{fact}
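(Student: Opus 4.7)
The plan is to derive the polynomial Ramsey bound from a structural property of symmetric definable relations in distal structures, namely the strong Erdős–Hajnal property (SEH): there exists $\delta = \delta(\varphi) > 0$, depending only on $\varphi$ and not on the parameter $b$, such that for every finite $X \subseteq M^d$ and every instance $E = \varphi(M^d, M^d; b) \cap (X \times X)$, there are $A, B \subseteq X$ with $|A|, |B| \geq \delta |X|$ and $A \times B$ either entirely contained in $E$ or entirely disjoint from $E$. This is essentially the content of \cite[Theorem 3.6]{chernikov2015regularity}, a distal regularity lemma producing \emph{homogeneous} (not merely $\varepsilon$-regular) rectangles; the uniformity of $\delta$ in $b$ is what ultimately yields a constant $c$ depending only on $\varphi$.

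Granted SEH, the second step is a classical iterative extraction argument converting SEH into polynomial Ramsey bounds for symmetric relations. Let $f(n)$ denote the minimum $N$ for which every sequence in $M^d$ of length $N$ contains a $\varphi$-indiscernible subsequence of length $n$; the goal is $f(n) \leq n^c$. Given a sequence of length $N$, apply SEH to partition off two disjoint subsequences $A, B$ each of length at least $\delta N$ with a fixed homogeneous relation across $A \times B$. Symmetry of $\varphi(x_1,x_2;b)$ is what allows the two sides to be spliced back together: having recursively found an indiscernible subsequence in each of $A$ and $B$ of the same homogeneity type as the $A$-$B$ pattern, one concatenates them (with $A$ preceding $B$) to obtain a $\varphi$-indiscernible subsequence of length $n$ in the original sequence. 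The recursion gives $f(n) \leq f(n/2)/\delta$, hence $f(n) \leq n^{\log_2(1/\delta)}$, so $c = \log_2(1/\delta)$ works.

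The main obstacle is maintaining compatibility between the bipartite pattern from SEH and the internal patterns obtained by recursion: SEH offers no control over whether the pair $(A,B)$ is bi-complete or bi-empty, so one cannot assume the recursive calls will return the matching type. The way around this is to track clique-type and independent-set-type extractions together, phrased in the symmetric setting as a single quantity $f(n)$ bounding the length needed for an indiscernible subsequence of either homogeneity type; symmetry of the relation ensures the two cases are genuinely interchangeable, and the recursion $f(n) \leq f(n/2)/\delta$ then proceeds uniformly regardless of which case SEH delivers at each level. Verifying this, together with citing (or reproving in the relative setting) the SEH property for reducts of distal structures with parameter uniformity, completes the argument.
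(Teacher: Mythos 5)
The recursion you propose---$f(n) \leq f(n/2)/\delta$ with a \emph{single} quantity $f(n)$ tracking ``an indiscernible subsequence of either homogeneity type''---does not close. You correctly identify the obstruction: after applying SEH to get disjoint $A,B$ with a homogeneous $A\times B$, the recursive calls on $A$ and $B$ each return some indiscernible subsequence, but with no control over whether its homogeneity type (all-$E$ or all-$\neg E$) matches the $A$-$B$ pattern; if $A\times B$ is full while (say) the subsequence from $A$ is all-$\neg E$, the concatenation is not $E$-indiscernible. The appeal to ``symmetry of the relation'' to interchange the two types is a red herring: symmetry of $\varphi(x_1,x_2;b)$ in $x_1,x_2$ lets you pass between the unordered, set-theoretic EH property and the ordered, sequence-indiscernibility statement $R^*_\varphi(n) \leq n^c$ (the order of enumeration becomes irrelevant for a symmetric binary relation), but it in no way exchanges the ``all pairs related'' class with the ``no pairs related'' class. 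So your stated resolution does not actually repair the gap.

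The standard fix---and the one underlying the paper's citation of \cite{alon2005crossing} via Remark~\ref{rem: strong EH implies EH}---is to run an \emph{off-diagonal} two-parameter recursion. Let $f(s,t)$ be the least $N$ such that every $N$-element configuration in $\CG^{\textrm{sym}}_\varphi$ contains a clique of size $s$ or an independent set of size $t$. If SEH yields a complete bipartite $A\times B$, a clique in $A$ together with a clique in $B$ is a clique of $G$, while an independent set in $A$ alone is an independent set of $G$; so $\delta N < f(\lceil s/2\rceil, t)$ in that case, and by the symmetric reasoning $\delta N < f(s, \lceil t/2\rceil)$ if $A\times B$ is empty. This gives $f(s,t) \leq \delta^{-1}\max\bigl(f(\lceil s/2\rceil,t), f(s,\lceil t/2\rceil)\bigr)$, whence $f(n,n) \leq n^{2\log_2(1/\delta)}$; converting back to $R^*_\varphi$ costs only another small factor (to dispose of repeated entries) and preserves polynomiality. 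Note also that the statement is labeled a Fact in the paper and is not reproved there: the SEH input is exactly \cite[Theorem 3.6]{chernikov2015regularity}, and the SEH\,$\Rightarrow$\,EH step is delegated to \cite{alon2005crossing} via Remark~\ref{rem: strong EH implies EH}, which is precisely the off-diagonal argument just sketched.
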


In this paper, we continue investigating the bounds for the functions $R_{ \varphi}(n)$ and $R_{\varphi}^*(n)$ in various NIP structures. First, we consider an analog of the Bukh-Matousek theorem (Fact \ref{fac: BukhMat}) in $o$-minimal structures. Recall that a structure $\mathcal{M} = (M, <, \ldots)$ is \emph{$o$-minimal} if every definable subset of $M$ is a
finite union of singletons and intervals (with endpoints in $M\cup\{\pm\infty\}$). From this
assumption one obtains cell decomposition and other geometric information for definable subsets of $M^{n}$, for all $n$. The theory of $o$-minimal structures is rather well developed and has applications in other branches of mathematics (we refer to
\cite{van1998tame} for a detailed treatment of $o$-minimality, or to \cite[Section 3]{scanlonminimality}
and references therein for a quick introduction). Examples of $o$-minimal structures include
$\bar{\mathbb{R}} = \left(\mathbb{R},+,\times\right)$,
$\mathbb{R}_{\exp} = \left(\mathbb{R},+,\times,e^{x}\right)$,
$\mathbb{R}_{\text{an}} = \left(R,+,\times,f\restriction_{\left[0,1\right]^k}\right)$ for $f$
ranging over all functions that are real-analytic on some neighborhood of $[0,1]^k$, or $\mathbb{R}_{\text{an},\exp }$, the combination of both these last two examples. An $o$-minimal structure $\mathcal{M}$ is \emph{polynomially bounded} if for every definable one-variable function $f$, there exists $N \in \mathbb{N}$ such that $|f(x)| \leq x^N$ for all sufficiently large positive $x$. So for example $\bar{\mathbb{R}}$ and $\mathbb{R}_{\text{an}}$ are polynomially bounded, but $\mathbb{R}_{\exp}$ is not. In Section \ref{sec: BukhMat in polybdd} we generalize Fact \ref{fac: BukhMat} to arbitrary polynomially bounded $o$-minimal expansions of the field of reals $\bar{\RR}$.

\begin{thm} \label{thm: BM for polybdd omin}
	Let $\mathcal{M}$ be a polynomially bounded $o$-minimal expansion of  $\RR$. Then for every $k \in \NN$ and every formula $\varphi(x_1, \ldots, x_k;z)$ with $x_1, \ldots, x_k$ singletons, there is some $c = c(\varphi)$ such that $R^{*}_\varphi(n) \leq 2^{2^{cn}}$ for all sufficiently large $n$.
\end{thm}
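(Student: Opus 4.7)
The plan is to follow the proof strategy of Bukh--Matousek (Fact~\ref{fac: BukhMat}), substituting semialgebraic tools with their polynomially bounded $o$-minimal analogues: $o$-minimal cell decomposition in place of cylindrical algebraic decomposition, and the preparation theorem for polynomially bounded structures (Miller, after Lion--Rolin) in place of polynomial sign-pattern estimates. The feature that makes the argument go through is precisely polynomial boundedness: it ensures that definable one-variable functions have only finitely many asymptotic growth rates, captured algebraically by a finite set of real exponents (the ``field of exponents''). This is exactly what fails in $\mathbb{R}_{\exp}$, where the exponential function creates infinitely many incomparable growth rates.

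Concretely, I would pass to a sufficiently saturated elementary extension $\UU \succeq \CM$ and, given a long sequence $(a_i : i < N)$ in $\UU$, apply cell decomposition to $\varphi(x_1, \ldots, x_k; z)$ uniformly in the parameter $z$. The set $\varphi(\UU^k; b) \cap \{x_1 < \cdots < x_k\}$ decomposes into finitely many cells $C_1, \ldots, C_r$ (with $r$ depending only on $\varphi$), each described by equalities and inequalities involving finitely many definable functions $f_j$ of proper subsets of the coordinates. A preliminary binary-Ramsey reduction (via Fact~\ref{fac: EH in distal}, at only a polynomial cost in $N$) then allows us to assume the sequence is strictly increasing and that every ``two-variable'' comparison derived from $\varphi$ is already uniform along it.

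The heart of the argument is to apply the preparation theorem to each $f_j$: after a further finite refinement of the cell decomposition, $f_j$ takes the form of a definable unit times a monomial $\prod_\ell x_\ell^{\alpha_{j,\ell}}$ with real exponents $\alpha_{j,\ell}$ depending only on $\varphi$. Membership of a tuple $(a_{i_1}, \ldots, a_{i_k})$ in $\varphi$ is then asymptotically controlled by the signs of finitely many logarithmic comparisons of the form $\sum_\ell \alpha_{j,\ell} \log a_{i_\ell} \gtrless \sum_\ell \alpha_{j',\ell} \log a_{i_\ell}$. Passing to a subsequence on which the values $\log a_i$ form a ``rate-comparable'' configuration in an ordered $\QQ$-vector space, and counting the number of possible sign patterns of a fixed finite list of $\QQ$-linear functionals on such a configuration, should yield the doubly exponential bound $R^*_\varphi(n) \leq 2^{2^{cn}}$.

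The main obstacle I anticipate is establishing uniformity of the constant $c$ in the parameter $b$: the preparation theorem must be applied with $(z, x_1, \ldots, x_k)$ as the ambient variables so that both the number of pieces and the exponent set depend only on $\varphi$ (and not on the individual $b$), which is the key qualitative strengthening over the formulation in \cite{bukh2014erdHos}. A secondary complication is handling the lower-dimensional (equality) cells produced by cell decomposition, on which a sequence is forced to lie and cannot be taken asymptotically generic; these will require an inductive treatment on cell dimension. Finally, one must verify that the various pre-Ramsey reductions (to monotonic sequences, to a single cell, to asymptotically monomial behavior) incur only polynomial losses in $N$, so that the final bound remains doubly exponential rather than ballooning to a taller tower.
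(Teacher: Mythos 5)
Your proposal takes a genuinely different route from the paper. The paper's argument is purely model-theoretic: it works with the global $\emptyset$-invariant type $\tilde p$ of $+\infty$, shows that polynomial boundedness is precisely what makes Morley sequences in $\tilde p$ coincide with ``$h$-growing'' sequences in the limit (Lemma~\ref{lem: infty1 iff R-growing}, using Miller's growth dichotomy in the form of Fact~\ref{fac: poly bdd growth}), obtains by compactness that every $h$-growing sequence is $\Delta$-indiscernible for $h$ large (Lemma~\ref{lem: o-min indisc without params}), and then uses UDTFS to shrink to $\Delta(\bar x;b)$-indiscernibility at only linear cost (Lemma~\ref{lem: finitary shrinking}, Corollary~\ref{cor: o-min indisc with params}). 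Uniformity in the parameter $b$ — the point you correctly flag as delicate — then comes for free, since $\tilde p$ and the UDTFS schemata are independent of $b$. Your proposed route, by contrast, would generalize the original geometric proof of Bukh--Matousek via $o$-minimal cell decomposition and the Lion--Rolin/Miller preparation theorem. This is plausible in principle (the paper even notes that the original BM proof gives a parameter-dependent constant), and it has the merit of making the geometry explicit, but it faces real technical friction: iterated preparation in several variables, the need to control the definable units (which are bounded but not constant, so ``asymptotically controlled by logarithmic comparisons'' needs a precise quantitative formulation), and an inductive descent through lower-dimensional cells. The paper's approach sidesteps all of this.

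More importantly, there is a concrete gap in the quantitative part of your sketch. The step ``passing to a subsequence on which the $\log a_i$ form a rate-comparable configuration \ldots should yield the doubly exponential bound'' conflates two separate things. On a sufficiently rapidly growing subsequence the sign patterns of your finitely many real-linear functionals of the $\log a_i$ do collapse to a bounded set (this is the analogue of Lemma~\ref{lem: o-min indisc without params}); but the $2^{2^{cn}}$ does \emph{not} come from counting sign patterns — it comes from the combinatorial extraction lemma of Bukh--Matousek (Fact~\ref{prop:comb}), which says that in \emph{any} sequence of length $N \leq 2^{h^{2n}}$ one can find a length-$n$ subsequence that, after an affine change of variables or an inversion (hence the four transformed formulas $\varphi_1,\ldots,\varphi_4$ in the paper's Theorem~\ref{thm:main}), is $h$-growing. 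You don't invoke this lemma or supply any substitute, and without it there is no reason an arbitrary sequence of reals contains a ``rate-comparable'' subsequence of the needed length at only doubly-exponential cost. The sign-pattern count you describe governs what happens \emph{once} you have such a subsequence; the bottleneck is producing one. Any completion of your argument would have to import Fact~\ref{prop:comb} (or reprove it), at which point you are essentially reassembling the paper's proof with the preparation theorem standing in for Lemma~\ref{lem: infty1 iff R-growing} and Lemma~\ref{lem: o-min indisc without params}.
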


In particular this implies that in the semialgebraic case (Fact \ref{fac: BukhMat}) the constant $c$ only depends on the description complexity of the relation, and not on the magnitude of the parameters, which doesn't seem to have been noticed before. Our argument combines uniform definability of types over finite sets in NIP structures (see Definition \ref{def: UDTFS}), basic properties of invariant types and a combinatorial lemma from \cite{bukh2014erdHos}. On the other hand, in Section \ref{sec: Counterex in Rexp} we show that no analog of Theorem \ref{thm: BM for polybdd omin} can hold in $\mathbb{R}_{\exp}$.

In this paper, ``$\log$'' always means logarithm with base $2$, unless explicitly stated otherwise.
\begin{thm}\label{thm: unbounded Rexp}
	For every $k \geq 3$ there are relations $E_k(x_1, \ldots, x_k)$ definable in $\mathbb{R}_{\exp}$ with $x_1, \ldots, x_k$ singletons, constants $C_k > 0$ and $n_k \in \mathbb{N}$ such that,  for each  $n > n_k$,  there is a sequence $\vec{a}_n$ in $\mathbb{R}$ of length $n$ that doesn't contain an $E_k$-indiscernible subsequence of length greater than $C_k \log \log \ldots \log n$, with $k-2$ iterations of $\log$.
\end{thm}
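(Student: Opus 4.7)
The plan is to combine the high-dimensional semialgebraic Ramsey lower bound of Fact~\ref{fac: lower bounds semialg}(2) with an $\mathbb{R}_{\exp}$-definable dimension-reduction that encodes $(k-3)$-tuples as single real numbers via iterated exponentials, and to supply a separate direct construction for the base case $k = 3$. For $k \geq 4$, Fact~\ref{fac: lower bounds semialg}(2) yields a $k$-ary semialgebraic relation $E^{\mathrm{sa}} \subseteq (\mathbb{R}^{k-3})^k$ together with a sequence $(b_i)_{i=1}^{N}$ in $\mathbb{R}^{k-3}$ whose longest $E^{\mathrm{sa}}$-indiscernible subsequence has length at most $O(\log^{(k-2)} N)$. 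The task is then to convert this into a singleton-variable relation in $\mathbb{R}$.

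After an affine rescaling we may assume $b_i \in [0,1]^{k-3}$. I would construct an $\mathbb{R}_{\exp}$-definable injection $\Phi \colon [0,1]^{k-3} \to \mathbb{R}$ with definable inverse on its image by iterated exponentiation, e.g.
\[
\Phi(y_1, \ldots, y_{k-3}) = y_1 + \exp\!\bigl(C\bigl(y_2 + \exp(C(y_3 + \cdots + \exp(Cy_{k-3})))\bigr)\bigr)
\]
for a sufficiently large constant $C$. The nested $\exp$ dominates on $[0,1]^{k-3}$, so each coordinate $y_j$ is recoverable by iterated $\log$'s and subtractions, all definable in $\mathbb{R}_{\exp}$. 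Define
\[
E_k(x_1, \ldots, x_k) \iff \exists y^{(1)}, \ldots, y^{(k)} \in [0,1]^{k-3} \ \Bigl( \bigwedge_{j=1}^{k} \Phi(y^{(j)}) = x_j \ \wedge\ E^{\mathrm{sa}}(y^{(1)}, \ldots, y^{(k)}) \Bigr),
\]
which is $\mathbb{R}_{\exp}$-definable with singleton variables $x_1, \ldots, x_k$. Since $\Phi$ is a bijection onto its image, the sequence $(\Phi(b_i))_{i=1}^{N}$ in $\mathbb{R}$ inherits the Ramsey property of $(b_i)$, yielding the required bound. For the base case $k = 3$, where Fact~\ref{fac: lower bounds semialg} does not apply directly, one constructs a ternary $\mathbb{R}_{\exp}$-definable relation on a simple sequence such as $a_i = i$, with the relation comparing logarithms of successive gaps in such a way that both truth values of indiscernibility force geometric gap-growth (giving length $O(\log n)$); this may require a small Boolean combination of elementary comparisons, since a single naive inequality such as $z - y > 2(y - x)$ only controls one direction.

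The main obstacle I expect is precisely this base case $k = 3$: engineering a single $\mathbb{R}_{\exp}$-definable ternary relation whose indiscernible subsequences are short regardless of the truth value is subtle, because naive comparison relations force gap-growth in only one of the ``always'' or ``never'' cases and leave the other uncontrolled. A secondary technical point is the scaling in the construction of $\Phi$ for $k \geq 5$: one must choose $C$ large enough that the nested exponential at each level strictly dominates all lower-order contributions from $y_1, \ldots, y_{k-4}$, so that $\Phi$ is injective on $[0,1]^{k-3}$ and the iterated-$\log$ inverse is well-defined; verifying this amounts to a routine inductive estimate on the magnitude of the outer $\exp$-term.
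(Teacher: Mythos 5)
The central gap is that the injection $\Phi\colon [0,1]^{k-3}\to\mathbb{R}$ you rely on cannot exist for $k\ge 5$, and this kills the reduction for all such $k$. Since $\mathbb{R}_{\exp}$ is $o$-minimal and definable injections in $o$-minimal structures preserve dimension, there is no definable injection from a set of dimension $k-3\ge 2$ into $\mathbb{R}$. The concrete $\Phi$ you write down is also visibly non-injective: already for $k-3=2$, $\Phi(y_1,y_2)=y_1+e^{Cy_2}$ gives $\Phi(1,0)=2=\Phi(0,(\log 2)/C)$, and in general the level sets of $\Phi$ are positive-dimensional. The claim that ``the nested $\exp$ dominates, so each $y_j$ is recoverable by iterated $\log$s and subtractions'' conflates magnitude separation with invertibility: the inner exponential takes a whole interval of values, so the additive contributions of $y_1$ and of $\exp(Cy_2)$ to the sum cannot be disentangled. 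What does work is a \emph{discrete} variant of this idea, and that is exactly what the paper uses: it encodes $0$--$1$ vectors as sums $\sum_i \beta_i T^{a_i}$ with $T$ a large integer and $\vec a$ an increasing integer sequence; uniqueness of decoding comes from the discreteness of the exponents and coefficients, not from any definable inverse of a map $\mathbb{R}^m\to\mathbb{R}$. This produces a much longer sequence of real singletons from a shorter one, rather than a dimension-reducing bijection.

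The proposal also leaves $k=3$ open, flagging it as the main obstacle, but it is actually the easy part: the purely semialgebraic relation $E_3(x_1,x_2,x_3):=(x_1+x_3-2x_2\ge 0)$ on $1,2,\dots,2^n$ has no $E_3$-indiscernible subsequence of length $n+2$ (this is from \cite{conlon2014ramsey} and is the paper's base case); the only subtlety is arranging robustness, which the paper handles by passing to a Behrend-type subsequence with no nontrivial $3$-term arithmetic progression. The paper's overall route is a definable stepping-up lemma: from $E_k$ on $\mathbb{R}$, bad on an integer sequence of length $N$, it builds $E_{k+1}$ on $\mathbb{R}$, bad on a sequence of length $2^N$, by comparing $\log_T$ of successive differences and observing that, after rewriting in terms of ratios of differences, the resulting formula is $T$-independent and stays inside $\mathbb{R}$. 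Your observation for $k=4$ is sound (there $d=k-3=1$, so Fact~\ref{fac: lower bounds semialg}(2) already yields a singleton-variable semialgebraic relation), but the approach cannot be pushed past $k=4$.
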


By a theorem of Miller \cite{miller1994exponentiation}, if an $o$-minimal expansion of the field of real numbers is not polynomially bounded, then exponentiation is definable in it (i.e. the graph of the exponentiation function is a definable relation). Combining this with Theorems \ref{thm: BM for polybdd omin} and \ref{thm: unbounded Rexp} we obtain a new combinatorial characterization of polynomial boundedness for $o$-minimal expansions of $\mathbb{R}$.

\begin{cor}
Let $\mathcal{M}$ be an $o$-minimal expansion of $\mathbb{R}$. The following are equivalent.
\begin{enumerate}
\item $\CM$ is polynomially bounded.
\item For every $k \in \NN$ and every formula $\varphi(x_1, \ldots, x_k;z)$ with $x_1, \ldots, x_k$ singletons, there is some $c = c(\varphi)$ such that $R^{*}_\varphi(n) \leq 2^{2^{cn}}$ for all sufficiently large $n$.
\item There is some $h \in \mathbb{N}$ such that, for every $k \in \NN$ and every formula $\varphi(x_1, \ldots, x_k;z)$ with $x_1, \ldots, x_k$ singletons,  there is some $c = c(\varphi)$ such that $R^{*}_\varphi(n) \leq \twr_{h}(n^c)$ for all sufficiently large $n$.

\end{enumerate}

\end{cor}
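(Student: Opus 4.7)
My plan is to close the loop $(1) \Rightarrow (2) \Rightarrow (3) \Rightarrow (1)$, with all the heavy lifting done by the two theorems stated earlier in the excerpt together with Miller's dichotomy (already cited in the discussion just before the corollary).

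The implication $(1) \Rightarrow (2)$ is simply a restatement of Theorem \ref{thm: BM for polybdd omin}. For $(2) \Rightarrow (3)$ it suffices to take $h = 3$: given the bound $R^*_\varphi(n) \leq 2^{2^{cn}}$, for all sufficiently large $n$ we have $2^{2^{cn}} \leq 2^{2^{n^2}} = \twr_3(n^2)$, so the double-exponential bound from (2) is subsumed by a tower bound of height $3$ with exponent constant $\max(c,2)$.

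The substantive direction is $(3) \Rightarrow (1)$, which I would prove by contraposition. Assume $\CM$ is not polynomially bounded. Miller's theorem then ensures that the real exponential function is definable in $\CM$, hence every $\mathbb{R}_{\exp}$-definable relation is definable in $\CM$. Applying Theorem \ref{thm: unbounded Rexp}, for every $k \geq 3$ we obtain a $k$-ary $\CM$-definable relation $E_k$ (in singleton variables) and sequences of length $n$ containing no $E_k$-indiscernible subsequence longer than $C_k \log^{(k-2)} n$. Inverting this yields the lower bound
$$
R^{*}_{E_k}(m) \geq \twr_{k-2}(\beta_k m)
$$
for some $\beta_k > 0$ and all sufficiently large $m$.

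Now suppose for contradiction that (3) holds with some fixed $h$, and choose $k := h + 3$. Then (3) supplies a constant $c_k = c(E_k)$ with $R^{*}_{E_k}(n) \leq \twr_h(n^{c_k})$ for large $n$, and combining with the lower bound above gives $\twr_{h+1}(\beta_k n) \leq \twr_h(n^{c_k})$ for all sufficiently large $n$. The remaining (routine but necessary) step is the growth comparison: iterating the identity $\twr_j = 2^{\twr_{j-1}}$ and taking logarithms $h-1$ times reduces this to the inequality $2^{\beta_k n} \leq n^{c_k}$, which fails for large $n$. This contradiction establishes (1). The only real obstacle is properly bookkeeping the heights in the tower and ensuring the gap of one exponential cannot be absorbed by raising the polynomial argument; once $k$ is chosen strictly larger than $h$, the lower bound from Theorem \ref{thm: unbounded Rexp} unavoidably overwhelms the upper bound from (3).
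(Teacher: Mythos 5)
Your proof is correct and follows the same approach the paper intends when it says the corollary follows from combining Theorem~\ref{thm: BM for polybdd omin}, Theorem~\ref{thm: unbounded Rexp} and Miller's dichotomy. The tower-height bookkeeping in the contrapositive of $(3)\Rightarrow(1)$ (choosing $k = h+3$ so that the lower bound $\twr_{h+1}(\beta_k n)$ from Theorem~\ref{thm: unbounded Rexp} beats the upper bound $\twr_h(n^{c_k})$) is handled correctly.
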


Using the general method of the proof developed in Section \ref{sec: BukhMat in polybdd}, in Section \ref{sec:case-p-adics} we apply it to prove an analog of Fact \ref{fac: BukhMat} in the fields of the $p$-adics $\mathbb{Q}_p$, for $p$ prime, and many related structures (see Section \ref{sec:case-p-adics} for the definition of $P$-minimality and related notions). 

\begin{thm}\label{thm: BM in P-min}
Let $\CM$ be a $P$-minimal expansion of a field with definable Skolem functions and the value group $\mathbb{Z}$. Then,  for every $k \in \NN$ and every formula $\varphi(x_1, \ldots, x_k;z)$, with $x_1, \ldots, x_k$ singletons, there is some $c = c(\varphi)$ such that $R^{*}_\varphi(n) \leq 2^{2^{cn}}$ for all sufficiently large $n$.

\end{thm}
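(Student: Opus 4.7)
The plan is to mimic the proof of Theorem~\ref{thm: BM for polybdd omin} from Section~\ref{sec: BukhMat in polybdd}, checking that each of its ingredients from the polynomially bounded $o$-minimal setting has a suitable analog in the $P$-minimal setting under the stated hypotheses. The three inputs I expect to need are: (i) that $\CM$ is NIP and that $\CL(\CM)$-formulas in one variable admit uniform definability of types over finite sets (UDTFS); (ii) the existence of canonical $\CM$-definable extensions of one-variable types over finite parameter sets, so that a long sequence of singletons can be thinned to one with controlled behavior in each coordinate; and (iii) the purely combinatorial lemma of Bukh--Matousek which converts these local thinnings into a double-exponential global bound.

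Input (i) follows from standard facts in the $P$-minimal literature: $P$-minimality implies NIP, and one-variable UDTFS can be extracted from one-variable cell decomposition together with the definable Skolem functions hypothesis (which lets one name canonical centers and radii for the cells). Input (iii) is used as a black box in Section~\ref{sec: BukhMat in polybdd} and transfers verbatim, since it is a statement about set systems on a finite sequence of abstract points and is indifferent to the ambient structure. The substance of the transfer therefore lies in establishing (ii).

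For (ii) the assumption that the value group is $\ZZ$ plays the role that the linear order plays in the $o$-minimal proof. In the $o$-minimal case, a type over a finite set $A \subseteq M$ in one variable is determined by the cut it realizes in $A$, and for each such cut there is a canonical $\CM$-definable global extension (e.g. ``generic to the right of $a$'' or ``infinitesimally close to $a \in A$ from above''). In the $P$-minimal case, one-variable cell decomposition replaces cuts by a finite system of balls and points whose centers and radii lie in the definable closure of $A$; discreteness of the value group then ensures that this system admits canonical generic extensions, obtained by shrinking to the unique ball of least radius containing the type and then selecting a point whose valuative distance from the center is a fixed element of $\ZZ$. Definable Skolem functions are used to pick canonical parameters for these extensions, yielding an $\CM$-definable assignment sending a finite $A$ and a one-variable type over $A$ to a definable global extension.

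With (i), (ii), (iii) in hand, the inductive argument of Section~\ref{sec: BukhMat in polybdd} runs in the $P$-minimal case with only cosmetic changes: iteratively replace the variables $x_1, \ldots, x_k$ of $\varphi$ by canonical $\CM$-definable extensions, reducing the $k$-ary relation to a binary relation on a subsequence of length roughly $2^{cn}$, and conclude by invoking Fact~\ref{fac: general Ramsey bounds}(1). The main obstacle I anticipate is the precise verification of (ii): one must show that the canonical ball-cut extensions compose well with $\varphi$ so that the inductive step goes through, and in particular that the number of ball-cut parameters needed to describe a one-variable type over $A$ grows polynomially in $|A|$, which is precisely the quantitative content of UDTFS required by the combinatorial lemma.
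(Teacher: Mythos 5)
The overall plan — mimic the polynomially bounded $o$-minimal argument by transferring each ingredient — is the right strategy and matches the paper. However, your input (iii) contains a genuine and serious gap: Fact~\ref{prop:comb} does \emph{not} transfer verbatim. It is not a statement about abstract set systems; it asserts that any real sequence of length $N$ contains, up to an affine or inverse-affine transformation $x \mapsto A + Bx$ or $x \mapsto A + B/x$, an $h$-growing subsequence of length $n$, and its proof exploits the ordered field structure of $\RR$. In the $p$-adic setting the paper has to \emph{prove} a new analog (Proposition~\ref{prop:main p-adic}): it shows that, up to one of finitely many definable transformations, any sequence of length $2^{2^{Ck}}$ in $\CM$ contains a \emph{linearly $n$-growing} subsequence, meaning $v(b_{i+1}) > n\, v(b_i)$. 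The proof of this is nontrivial: it uses the ball structure and discreteness of the value group (Lemma~\ref{lem:comb1}, Proposition~\ref{prop:diff-seq}) to extract a subsequence with pairwise distinct valuations, then applies Erd\H{o}s--Szekeres and the doubling lemma Fact~\ref{fac: doubling subsequence} to the resulting \emph{integer} sequence of valuations, and then transfers back via the definable maps $x \mapsto x + \alpha$, $x \mapsto x^{-1}$, $x \mapsto b x$. Without this, the double-exponential bound cannot be established.

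Your input (ii) is also not quite the right picture. What the paper actually uses is not ``canonical ball-cut extensions'' of arbitrary types over finite sets, but one very specific invariant type: given $p\in S_1(\emptyset)$, there is at most one global type $\tilde p$ extending $p\cup\{v(x)>v(m):m\in\UU\}$, it is $\emptyset$-invariant (Proposition~\ref{prop:p-adic-types}, Lemma~\ref{lem: p-adic growth condition}), and realizations of $\tilde p^{(n)}|_M$ are characterized by a valuation-growth condition. The key analytic input is \emph{uniform polynomial boundedness} of $P$-minimal structures with value group $\ZZ$ (Fact~\ref{fac: P-min poly bdd}), which yields the cofinality statement Lemma~\ref{lem: poly bdd implies cofinal}, exactly paralleling Fact~\ref{fac: poly bdd growth} in the $o$-minimal proof. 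Your proposal does not identify this polynomial-boundedness input at all. Finally, the description of the endgame --- ``iteratively reducing the $k$-ary relation to a binary one and invoking Fact~\ref{fac: general Ramsey bounds}(1)'' --- does not match the actual proof structure: as in Theorem~\ref{thm:main}, one finds a linearly $n$-growing sequence via Proposition~\ref{prop:main p-adic}, applies the finitary shrinking Corollary~\ref{thm:value-grow-uniform} (which does use UDTFS via Lemma~\ref{lem: finitary shrinking}) to get a $\Delta$-indiscernible subsequence, and concludes directly; there is no iterated arity reduction.
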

This applies to the fields $\mathbb{Q}_p$ for all primes $p$, their finite extensions, as well as expansions by the analytic structure --- see Section \ref{sec:case-p-adics} for the details. In fact, there are no known examples of $P$-minimal structures with value group $\mathbb{Z}$ that do not satisfy Theorem \ref{thm: BM in P-min} (note that the combinatorial conclusion obviously transfers to the reducts).

\begin{prob}
Do Theorems \ref{thm: BM for polybdd omin} and \ref{thm: BM in P-min} hold in polynomially bounded $o$-minimal (respectively, $P$-minimal) theories that do not admit any archimedean models?
\end{prob}

In Section \ref{sec: Ramsey growth NIP} we consider the growth of $R^*_{\varphi}(n)$ in NIP structures for definable relations of higher arity. Generalizing Fact \ref{fac: semialgebraic tower}, we show a definable  stepping down lemma for NIP structures which implies the following.

\begin{thm}\label{thm: tower NIP}
Let $\mathcal{M}$ be an NIP structure, and assume that for all formulas $\varphi(x_1, x_2; z)$  we have $R^*_{\varphi}(n) \leq n^c$ for some $c = c(\varphi)$ and all $n$ large enough. Then for all $k \geq 3$ and all $\varphi(x_1, \ldots, x_k ; z)$ we have $R^*_{\varphi}(n) \leq \twr_{k-1}(n^c)$ for some $c = c(\varphi)$ and all $n$ large enough.
\end{thm}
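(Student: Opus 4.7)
The plan is to prove Theorem \ref{thm: tower NIP} by induction on $k \geq 2$, with the base case $k=2$ being the hypothesis. For the inductive step, fix $k \geq 3$, assume the bound $R^*_\psi(n) \leq \twr_{k-2}(n^{c(\psi)})$ holds for every $(k-1)$-ary formula $\psi$ over $\CM$, and aim to show $R^*_\varphi(n) \leq \twr_{k-1}(n^{c(\varphi)})$ for a given $k$-ary $\varphi(x_1, \ldots, x_k; z)$. The overall strategy follows the semialgebraic blueprint of \cite{conlon2014ramsey}, where semialgebraic cell decomposition is replaced by NIP combinatorics.

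The heart of the argument is a \emph{definable stepping-down lemma}. The key NIP input is Sauer-Shelah: writing $\varphi$ as $\eta(\bar{x}; x_k, z)$ with $\bar{x} = (x_1, \ldots, x_{k-1})$, the NIPness of $\eta$ gives a uniform bound $d = d(\varphi)$ on its VC dimension and hence on the number of distinct link sets
\[
L(\bar\imath) \;=\; \{j \in [N]\setminus \bar\imath : \CM \models \varphi(a_{i_1}, \ldots, a_{i_{k-1}}, a_j; b)\},
\]
namely $|\{L(\bar\imath) : \bar\imath \in [N]^{k-1}_<\}| \leq C N^d$ for some $C = C(\varphi)$, whenever $\vec{a} = (a_1, \ldots, a_N)$ is a sequence in $M^{|x_1|}$ and $b$ is a parameter. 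The ``same link'' equivalence on $(k-1)$-tuples is controlled by the $(k-1)$-ary formula $\chi(\bar x, \bar y; w, z) := \varphi(\bar x, w; z) \leftrightarrow \varphi(\bar y, w; z)$ (viewed as a binary formula in the pair of blocks $\bar x, \bar y$), instantiated at various choices of $w$ from the sequence.

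Using the inductive hypothesis applied to $\chi$ (and suitable related $(k-1)$-ary formulas encoding the link partition), one refines $\vec a$ to a subsequence on which the link equivalence becomes trivial --- every $(k-1)$-tuple shares a common link $L_0$ within the subsequence. On such a refined subsequence, the relation $\varphi(a_{\bar\imath}, a_j; b)$ depends only on whether $j \in L_0$, so the $k$-ary relation has collapsed to a unary one. A final dyadic refinement (restricting to indices all in $L_0$ or all outside $L_0$) produces the desired $\varphi(\cdot; b)$-indiscernible subsequence of length $n$.

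The main obstacle is the quantitative accounting in the stepping-down step. The naive iteration --- refining for one link class at a time, or iterating the binary Ramsey bound coordinate by coordinate --- yields only an $\twr_{k-1}(c^n)$ upper bound, falling short of the sharp $\twr_{k-1}(n^c)$. To obtain the correct polynomial-in-$n$ dependence inside the tower, one must leverage the Sauer-Shelah link bound together with a single carefully orchestrated application of the $(k-1)$-ary inductive hypothesis to an auxiliary formula capturing the link partition uniformly, so that only one additional exponential is incurred in passing from arity $k-1$ to arity $k$.
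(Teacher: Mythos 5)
Your high-level plan (induction on $k$, Sauer--Shelah to control the link combinatorics, reduce to a $(k-1)$-ary Ramsey problem) points in the right direction, but the central stepping-down step as you've described it does not work, and you acknowledge as much ("one must leverage ... a single carefully orchestrated application ... so that only one additional exponential is incurred"), without supplying that orchestration. There are two concrete problems.

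First, the auxiliary formula you propose, $\chi(\bar x, \bar y; w, z) := \varphi(\bar x, w; z) \leftrightarrow \varphi(\bar y, w; z)$, is a binary relation on $(k-1)$-tuples of $M^d$, not a $(k-1)$-ary relation on $M^d$. The inductive hypothesis applies to formulas $\psi(x_1,\dotsc,x_{k-1};z')$ with $|x_1|=\dotsb=|x_{k-1}|=d$, so there is no legitimate way to "apply the $(k-1)$-ary inductive hypothesis to $\chi$." The correct auxiliary formula is obtained by a different repartition: set $\psi(x_1,\dotsc,x_{k-1};z') := \varphi(x_1,\dotsc,x_{k-1}; x_k,z)$, i.e.\ absorb $x_k$ into the \emph{parameter} block. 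It is exactly the use of $R^*$ (maximizing over parameters) that allows one to later instantiate $x_k$ at a concrete element of the sequence. This is the key move in the paper's proof that your sketch misses.

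Second, your plan aims to force \emph{all} link sets $L(\bar\imath)$ to coincide on a long subsequence, and then finish by "dyadic refinement." This is strictly stronger than what is achievable (or needed) with Sauer--Shelah, and you give no mechanism for reaching it at the $\twr_{k-1}(n^c)$ rate. What the paper does instead (Theorem~\ref{thm: stepping down NIP}, following \cite{conlon2014ramsey}) is build a \emph{cone-condition} sequence $\vec b = (b_1,\dotsc,b_{m+1})$ with $m := R^*_\psi(n-1)$: each $(k-1)$-tuple from $(b_1,\dotsc,b_m)$ has a monochromatic cone to the right. This is achieved by an iterative pigeonhole on complete $\theta(x_k;u)$-types over the tuples chosen so far (where $\theta$ is $\varphi$ with $x_k$ alone against the remaining variables), losing only a polynomial factor $C_3(r+1)^{(k-1)C_4}$ at step $r$ by Fact~\ref{fac: PolyTypesNIP} --- this is where Sauer--Shelah enters, applied to \emph{elements}, not to $(k-1)$-tuples. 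Once $\vec b$ is built, one applies $R^*_\psi$ to the first $m$ elements with parameter $(b_{m+1},e)$ to extract a $\psi(\cdot;b_{m+1},e)$-indiscernible subsequence of length $n-1$, appends $b_{m+1}$, and the cone condition guarantees the resulting length-$n$ sequence is $\varphi(\cdot;e)$-indiscernible. The accounting gives $R^*_\varphi(n) \leq 2^{Cm\log m}$, which iterates to $\twr_{k-1}(n^c)$. Without the correct choice of $\psi$ and the cone construction, the quantitative gap you flag remains genuine, so the proposal as written is incomplete.
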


In Proposition \ref{prop: sym EH implies EH} we generalize Fact \ref{fac: EH in distal} from symmetric binary formulas to arbitrary binary formulas, demonstrating that the assumption of Theorem \ref{thm: tower NIP} is satisfied in all reducts of distal structures (and it is satisfied in stable structures by Fact \ref{fac: stable EH}). We conjecture that it also holds in arbitrary NIP structures and discuss the connection to the Erd\H{o}s-Hajnal conjecture (see e.g.~ \cite{chudnovsky2014erdos}) for graphs definable in NIP structures.

\subsection*{Acknowledgements}
We would like to thank Martin Hils for encouraging us to present the results in Section \ref{sec:case-p-adics} in full generality, Pablo Cubides Kovacsics for a very helpful discussion  on polynomial boundedness in $P$-minimal structures, and the anonymous referee for some suggestions on improving the presentation.

Chernikov was supported by the NSF Research Grant DMS-1600796; by the NSF CAREER grant DMS-1651321 and by an Alfred P. Sloan Fellowship.

Starchenko was  supported by the NSF Research Grant DMS-1500671.

Thomas was supported by the DFG Research Grant TH 1781/2-1; by the Zukunftskolleg, University of Konstanz and by the NSERC Discovery Grant RGPIN 261961.

This
work was finished during the ``Model Theory, Combinatorics and Valued Fields'' trimester program
at Institut Henri Poincar\'e. We thank IHP for their
hospitality.

\section{Preliminaries on NIP} \label{sec: prelims on NIP}
Vapnik--Chervonenkis dimension, or VC-dimension, is an important notion in combinatorics and statistical learning theory (see e.g. \cite{matouvsek2002lectures} for an exposition).
Let $X$ be a set, finite or infinite, and let $\mathcal{F}$ be a
family of subsets of $X$.  Given $A \subseteq X$, we say that it is \emph{shattered} by
$\mathcal{F}$ if for every $A' \subseteq A$ there is some $S \in \mathcal{F}$ such that
$A \cap S = A'$. A family $\mathcal{F}$ is a \emph{VC-class} if there is some $n < \omega$
such that no subset of $X$ of size $n$ is shattered by $\mathcal{F}$. In this case \emph{the
  VC-dimension of $\mathcal{F}$}, that we will denote by $VC(\mathcal{F})$, is the smallest integer
$n$ such that no subset of $X$ of size $n+1$ is shattered by $\CF$.  For a set $B\subseteq X$, let
$\mathcal{F}\cap B=\left\{ A\cap B:A\in\mathcal{F}\right\}$ and let
$\pi_{\mathcal{F}}\left(n\right)=\max\left\{ \left|\mathcal{F}\cap B\right|:B\subseteq
  X,\left|B\right|=n\right\} $.

\begin{fact}[Sauer-Shelah lemma \cite{sauer1972density, shelah1972combinatorial}]\label{fac: SauerShelah} If $VC(\mathcal{F}) \leq d$ then for
  $n\geq d$ we have
  $\pi_{\mathcal{F}}\left(n\right)\leq\sum_{i\leq d}{n \choose i}=O\left(n^{d}\right)$.
\end{fact}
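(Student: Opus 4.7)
The plan is to reduce to the case where $X$ is finite and to prove the slightly stronger combinatorial inequality $|\CF| \leq \sum_{i \leq d} \binom{|X|}{i}$ for every set system $\CF$ on a finite set $X$ with $VC(\CF) \leq d$. Once this is established, the bound on $\pi_{\CF}(n)$ follows immediately: for any $B \subseteq X$ with $|B|=n$, the trace $\CF \cap B$ is a family on $B$ whose VC-dimension is still at most $d$ (any subset of $B$ shattered by $\CF \cap B$ is shattered by $\CF$), so $|\CF \cap B| \leq \sum_{i \leq d} \binom{n}{i}$. The standard estimate $\sum_{i \leq d} \binom{n}{i} = O(n^d)$ for fixed $d$ then yields the asymptotic statement.

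For the finite inequality, I would proceed by induction on $|X|$, with trivial base case $|X| = 0$ (so $|\CF| \leq 1$). For the inductive step, fix an arbitrary $x \in X$, set $X' = X \setminus \{x\}$, and split $\CF$ into two subfamilies on $X'$ as follows. Let
\[
\CF_1 = \{ A \cap X' : A \in \CF \}
\]
be the trace of $\CF$ on $X'$, and let
\[
\CF_2 = \{ A \subseteq X' : A \in \CF \text{ and } A \cup \{x\} \in \CF \}
\]
be the family of subsets of $X'$ witnessed by $\CF$ both with and without $x$. A direct counting argument shows $|\CF| = |\CF_1| + |\CF_2|$: each element of $\CF_1$ accounts for one set of $\CF$, and each element of $\CF_2$ contributes a second.

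Now I would bound each piece using the inductive hypothesis on $X'$. For $\CF_1$, clearly $VC(\CF_1) \leq d$, so $|\CF_1| \leq \sum_{i \leq d} \binom{|X|-1}{i}$. For $\CF_2$, the key observation is that $VC(\CF_2) \leq d - 1$: if some $Y \subseteq X'$ were shattered by $\CF_2$, then by the definition of $\CF_2$ the set $Y \cup \{x\}$ would be shattered by $\CF$, contradicting $VC(\CF) \leq d$. Hence $|\CF_2| \leq \sum_{i \leq d-1} \binom{|X|-1}{i}$. Adding these two estimates and applying Pascal's identity
\[
\binom{|X|-1}{i} + \binom{|X|-1}{i-1} = \binom{|X|}{i}
\]
term by term yields $|\CF| \leq \sum_{i \leq d} \binom{|X|}{i}$, completing the induction.

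The only subtle point is the VC-dimension bookkeeping for $\CF_2$; everything else is a clean counting induction. If I wanted a different route, the shifting/compression argument of Frankl or the polynomial method of Frankl--Pach would give an alternative proof, but the inductive argument above is the most direct and is the one I would present.
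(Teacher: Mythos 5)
The paper does not prove this statement; it is cited as a known fact from Sauer and Shelah, so there is no in-paper argument to compare against. Your proof is correct and is the standard induction-on-$|X|$ argument for the Sauer--Shelah lemma: the decomposition $|\CF| = |\CF_1| + |\CF_2|$ is exact (each trace $A'\in\CF_1$ corresponds to either one or two sets of $\CF$, and $\CF_2$ counts exactly those with two preimages), the bound $VC(\CF_2)\leq d-1$ is justified correctly (shattering $Y$ by $\CF_2$ forces shattering of $Y\cup\{x\}$ by $\CF$), and Pascal's identity closes the induction. The only thing worth spelling out in a polished write-up is the degenerate corner when $d=0$, where $\CF_2$ must be empty (otherwise $\{x\}$ is shattered), so the inductive step still goes through with the convention that the empty sum is $0$; as written this is implicit but unproblematic.
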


The important class of NIP theories was introduced by Shelah in his work on the classification
program \cite{ShelahCT}. It has attracted a lot of attention recently, both from the
point of view of pure model theory and as a result of its applications in algebra and geometry (see e.g. \cite{adler2008introduction, simon2015guide} for an introduction to the area). Examples of NIP structures are given by arbitrary stable structures, (weakly or quasi) $o$-minimal structures, the field of $p$-adics for every prime $p$ (along with its analytic expansion), as well as algebraically closed valued fields. As was observed in \cite{laskowski1992vapnik}, the original definition of NIP is equivalent to the
following one (see \cite{VCD1} for a more detailed account).
\begin{defn} Let $T$ be a complete theory and $\varphi(x,y)$ a formula in $T$, where $x,y$ are tuples
  of variables, possibly of different length. We say that \emph{the formula $\varphi(x,y)$ is NIP} if
  there is a model $\CM$ of $T$ such that the family of definable sets $\{ \varphi(M,a) : a \in M^{|y|} \}$ is a
  VC-class. In this case we define the \emph{VC-dimension of $\varphi(x,y)$} to be the VC-dimension of this
  class. (It is easy to see that by elementarily equivalence the above does not depend on the model
  $\CM$ of $T$.) \emph{A theory $T$ is NIP} if all formulas in $T$ are NIP, and a structure $\CM$ is NIP if its complete theory $\Th(\CM)$
is NIP. That is, a structure $\CM$ is \emph{NIP} if for every formula
$\varphi(x,y)$ the family of $\varphi$-definable sets
$\mathcal{F}_\varphi=\{\varphi(M,a) : a \in M^{|y|} \}$ is a VC-class.

\end{defn}
By a \emph{partitioned} set of formulas $\Delta(x,y)$, where $x$ and $y$ are two groups of variables, we mean a set of formulas all of which are of the form $\varphi(x,y) \in \CL$, i.e. have the same free variables partitioned into the same two groups. 
Given a (partitioned) set of formulas $\Delta(x,y)$ and a set $B \subseteq M^{|y|}$, we say that $\pi(x)$
is a \emph{$\Delta$-type over $B$} if
$\pi(x) \subseteq \bigcup_{\varphi(x,y) \in \Delta, b \in B} \left\{ \varphi(x,b), \neg \varphi(x,b) \right\}
$
and there is some $\mathcal{N} \succeq \CM$ and some $a \in N^{|x|}$ simultaneously satisfying all
formulas from $\pi(x)$. By a \emph{complete} $\Delta$-type over $B$ we mean a maximal $\Delta$-type
over $B$. We will denote by $S_{\Delta}(B)$ the collection of all complete $\Delta$-types over
$B$. If $\Delta$ consists of a single formula $\varphi(x,y)$, we simply say $\varphi$-type and write $S_\varphi(B)$, and if $\Delta$ consists of all formulas in the language, then we simply say ``type'' and write $S_x(B)$ for the space of complete types over $B$. In view of the remarks above, the following is an immediate corollary of the Sauer-Shelah
lemma.
\begin{fact}\label{fac: PolyTypesNIP} A structure $\CM$ is NIP if and only if for any finite set of
  formulas $\Delta(x,y)$ there is some $d \in \mathbb{N}$ such that $|S_{\Delta}(B)| = O(|B|^d)$ for
  any finite $B \subseteq M^{|y|}$.
\end{fact}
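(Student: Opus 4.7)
The plan is to reduce both directions to a single application of the Sauer--Shelah lemma (Fact \ref{fac: SauerShelah}), applied to the \emph{dual} family of each partitioned formula in $\Delta$. The key bookkeeping observation is that, for any partitioned formula $\varphi(x,y)$ and any finite $B \subseteq M^{|y|}$, a complete $\varphi$-type over $B$ is determined by its trace $\{b \in B : \CN \models \varphi(a,b)\}$ for a realization $a$ in some elementary extension $\CN \succeq \CM$, and by elementarity every such trace is already realized by some $a \in M^{|x|}$. Writing $\varphi^*(y,x) := \varphi(x,y)$ for the same formula with its partition reversed, and $\CF_{\varphi^*} = \{\varphi^*(M,a) : a \in M^{|x|}\}$ for the associated family of $\CM$-definable subsets of $M^{|y|}$, this yields the identification $|S_\varphi(B)| = |\CF_{\varphi^*} \cap B|$.

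For $(\Rightarrow)$, assume $\CM$ is NIP and fix a finite $\Delta(x,y) = \{\varphi_1, \ldots, \varphi_k\}$. Since every partitioned formula of $\CM$ has finite VC-dimension, each $\varphi_i^*$ satisfies $VC(\CF_{\varphi_i^*}) \leq d_i$ for some $d_i \in \NN$. Applying Sauer--Shelah to each $\CF_{\varphi_i^*}$ gives $|S_{\varphi_i}(B)| = O(|B|^{d_i})$ for every finite $B \subseteq M^{|y|}$. Since a complete $\Delta$-type over $B$ is determined by its restriction to each $\varphi_i$, one concludes $|S_\Delta(B)| \leq \prod_{i=1}^k |S_{\varphi_i}(B)| = O(|B|^d)$ with $d = \sum_i d_i$.

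For $(\Leftarrow)$, suppose $\CM$ is not NIP, so some formula $\varphi(x,y)$ has infinite VC-dimension. Then there exist arbitrarily large finite sets $A \subseteq M^{|x|}$ shattered by $\CF_\varphi = \{\varphi(M,b) : b \in M^{|y|}\}$, meaning that every subset $A' \subseteq A$ is cut out by some parameter $b \in M^{|y|}$ via $A' = \{a \in A : \CM \models \varphi(a,b)\}$. Taking $\Delta = \{\varphi^*\}$ with the reversed partition, this exactly says that every subset of $A$ arises as the trace of a distinct complete $\varphi^*$-type over $A$, giving $|S_\Delta(A)| = 2^{|A|}$ and contradicting any polynomial upper bound. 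The only real conceptual content is the identification of $S_\varphi(B)$ with traces of the dual family; note that no external duality-of-VC-dimension result (such as Assouad's lemma) is required here, precisely because the hypothesis of the fact quantifies over \emph{all} partitioned formulas, so the reversed ones are already in scope.
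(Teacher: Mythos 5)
Your proof is correct and fleshes out exactly the argument the paper gestures at when it says the fact is ``an immediate corollary of the Sauer--Shelah lemma'' in view of the preceding remarks: the identification of $\varphi$-types over a finite $B$ with traces of the dual family $\CF_{\varphi^*}$ on $B$ is precisely what makes Sauer--Shelah applicable, and the product bound over the finitely many $\varphi_i \in \Delta$ finishes the forward direction, while the shattered set in the converse direction gives $2^{|A|}$ types. Your observation that no Assouad-type duality of VC-dimension is needed --- because NIP (and the hypothesis of the fact) quantifies over all partitioned formulas, so $\varphi^*$ is already in scope --- is a correct and worthwhile clarification of a point some readers might otherwise think requires an extra lemma.
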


This result can be strengthened. The following definition is from \cite{guingona2012uniform, VCD1}.

\begin{defn} \label{def: UDTFS}

\begin{enumerate}
	\item Given a complete $\varphi(x,y)$-type
$q \in S_\varphi (B)$ for a set $B\subseteq M^{|y|}$, an $L(M)$-formula $d \varphi(y)$ is said to \emph{define} $q$ if for all $b \in B$ we have $$ \varphi(x,b) \in q \iff \CM \models d\varphi(b).$$
	\item We say that complete $\varphi(x,y)$-types are \emph{uniformly definable over finite sets}, with $m$ parameters, if there is a finite set of $\CL$-formulas $\Delta = (d\varphi_i(y;y_1, \ldots, y_m) : i < k)$, with $|y_1| = |y|$ for all $i<k$, such that for every \emph{finite} set $B \subseteq M^{|y|}$ and every $q \in S_\varphi(B)$ there are some $b_1, \ldots, b_m \in B$ and some $i<k$ such that $d\varphi_i (y; b_1, \ldots, b_m)$ defines $q$. We call the set $\Delta$ a \emph{uniform definition} for $\varphi$-types over finite sets, with $m$ parameters.
	\item We say that $T$ satisfies the \emph{Uniform Definability of Types over Finite Sets}, or \emph{UDTFS}, if for some (equivalently, any) $\CM \models T$, complete $\varphi$-types are uniformly definable over finite sets for all formulas $\varphi \in L$.
\end{enumerate}

\end{defn}

\begin{fact} \cite{chernikov2015externally} \label{fac: UDTFS}
	Every NIP theory satisfies UDTFS.
\end{fact}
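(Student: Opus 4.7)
The plan is to derive UDTFS from the existence of \emph{honest definitions} in NIP theories, the main structural tool developed in \cite{chernikov2015externally} and its predecessor paper. The starting ingredient is the honest definitions theorem: for every NIP formula $\varphi(x,y)$, there is a formula $\theta(y;\bar z)$ in $\CL$ such that for every $\CM \models T$, every small $A \subseteq M^{|y|}$, and every $\varphi$-type $q = \tp_\varphi(a/A)$ realized by some $a$ in an elementary extension $\CN \succeq \CM$, there exist parameters $\bar c \in M^{|\bar z|}$ for which $\theta(b;\bar c) \leftrightarrow \varphi(a,b)$ holds for all $b \in A$, and moreover $\theta(\CM;\bar c) \subseteq \varphi(a,\CM)$ globally (the honesty condition). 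This already defines $q$ on $A$ by an $\CL(\CM)$-formula, but the parameters $\bar c$ need not lie in $A$.

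The crux of the argument is to upgrade this in the finite case so that the parameters $\bar c$ come from $A = B$ itself. For this I would fix a candidate honest-definition scheme $\theta(y;\bar z)$ and argue by contradiction via compactness. Suppose that for every $m$ there is a finite $B_m$ and a type $q_m \in S_\varphi(B_m)$ whose honest definition cannot be realized with parameters in $B_m$. By Fact \ref{fac: PolyTypesNIP}, the trace family $\{\theta(B;\bar c) \cap B : \bar c \in M^{|\bar z|}\}$ has size polynomial in $|B|$, and $\theta$, being a formula in $T$, is itself NIP. Extracting a mutually indiscernible configuration from the $(B_m,q_m)$ and using the fact that honest definitions stabilize along indiscernible sequences in NIP theories, one obtains an unbounded ``honest-definition complexity'' for an auxiliary formula built from $\varphi$ and $\theta$, which contradicts its finite VC dimension. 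Hence the parameters $\bar c$ can in fact be chosen inside $B$.

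Once an internal honest definition exists for every finite $B$ and every $q \in S_\varphi(B)$, a final compactness step collects a \emph{finite} list $\Delta = (d\varphi_i(y;y_1,\ldots,y_m) : i < k)$ of defining formulas satisfying Definition \ref{def: UDTFS}. The main obstacle I anticipate is the middle step: \emph{localizing} the honest-definition parameters to the given finite set $B$. This is where NIP does its essential work, combining the finite VC dimension of the formulas involved, the polynomial type-counting bound of Sauer-Shelah (Fact \ref{fac: PolyTypesNIP}), and the stabilization of honest definitions along indiscernibles; the combinatorial heart of \cite{chernikov2015externally} lies precisely in making this localization explicit and uniform in $B$.
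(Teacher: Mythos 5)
The paper does not prove this statement at all --- it is quoted as a black box from \cite{chernikov2015externally} --- so the only meaningful comparison is with the proof in that reference, which indeed goes through honest definitions, as you propose. However, your sketch has a genuine gap precisely at the step you yourself flag as the crux. First, you misstate the honest definitions theorem: the parameters $\bar c$ of an honest definition are not taken in $M$ at large, but in $A'$, where $(M',A')$ is a sufficiently saturated elementary extension of the \emph{pair} $(M,A)$ with $A$ named by a predicate; the whole point of ``honesty'' is that the defining parameters live inside (an enlarged copy of) the set $A$ itself. With the version you state (parameters anywhere in $M$), the localization to $B$ is exactly the hard content, and the argument you sketch for it --- extracting mutually indiscernible configurations, invoking an unproved ``stabilization of honest definitions along indiscernible sequences,'' and deriving a contradiction with finite VC dimension --- is not an argument one can check; it does not correspond to the actual mechanism. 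In the real proof the localization is essentially free: when $A=B$ is \emph{finite}, any elementary extension of the pair satisfies $A'=A=B$, so the honest-definition parameters automatically lie in $B$. The genuine difficulty, which your sketch does not engage with, is the \emph{uniformity} of the honest definition --- choosing $\theta(y;\bar z)$ depending only on $\varphi$ and not on $A$, $q$, or the realization $a$ --- and in \cite{chernikov2015externally} this is obtained via the $(p,q)$-theorem of Alon--Kleitman and Matou\v{s}ek applied to a suitable NIP family, not via Sauer--Shelah counting.

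Second, your closing compactness step is also problematic as stated: the assertion ``for every finite $B$ and every $q\in S_\varphi(B)$ some scheme with parameters in $B$ works'' quantifies over finite sets of unbounded size and is not in an obvious way a first-order condition to which compactness applies directly; in the actual proof no such step is needed, because once the honest definition is uniform one already has a single scheme (or a finite disjunction thereof) witnessing Definition \ref{def: UDTFS}. So while you have correctly identified the right tool (honest definitions) and the right source, the proposal as written would not assemble into a proof: the statement of the tool is off in the one place where it matters, the claimed reduction is supported by a heuristic rather than an argument, and the place where NIP does its essential quantitative work (uniformity via the $(p,q)$-theorem) is absent.
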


This result can be viewed as a model-theoretic version of the Warmuth conjecture on the existence of compression schemes for VC-families, which was later established in \cite{moran2015sample}. Special cases of Fact \ref{fac: UDTFS} were proved earlier for some subclasses of NIP theories including stable \cite{ShelahCT}, $o$-minimal \cite{johnson2010compression}, and dp-minimal \cite{guingona2012uniform} theories. Note that this implies Fact \ref{fac: PolyTypesNIP} since, under UDTFS, for every finite set of formulas $\Delta$, every $\Delta$-type over a finite set $B$ is determined by fixing a definition for each $\varphi \in \Delta$ with parameters from $B$, of which there are only polynomially many choices. Explicit bounds on the number of parameters needed are given in \cite{VCD1} for some cases considered in this article.
\begin{fact} \label{fac: bounds on UDTFS}
\begin{enumerate}
	\item \cite[Section 6.1]{VCD1} Let $\CM$ be a (weakly or quasi) $o$-minimal structure. Then $\varphi(x,y)$-types are uniformly definable over finite sets using $|x|$ parameters, for all formulas $\varphi \in L$. In particular this applies to Presburger arithmetic $(\mathbb{Z},+,<)$.
	\item \cite[Section 7.2]{VCD1} Let $\CM$ be the field of $p$-adics. Then $\varphi(x,y)$-types are uniformly definable over finite sets using $2|x|$ parameters, for all formulas $\varphi \in L$.
\end{enumerate}
	
\end{fact}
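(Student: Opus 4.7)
\medskip
\noindent\emph{Proof proposal.} The general strategy is to combine uniform cell decomposition in the relevant geometric theory with the observation that a complete $\varphi(x,y)$-type over a finite set $B$ is pinned down by specifying, from $B$, canonical ``witness'' parameters relative to the one-variable cells; induction on $|x|$ then contributes one (respectively two) witness per coordinate, yielding the total count $|x|$ (respectively $2|x|$).

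\medskip
For part~(1), fix a formula $\varphi(x,y)$ in an $o$-minimal structure $\CM$ and treat first the base case $|x|=1$. By uniform $o$-minimal cell decomposition applied to $\varphi(x,y)$, there exist $\emptyset$-definable continuous partial functions $\tau_1(y),\dots,\tau_N(y)$ such that, for every $b$, the set $\varphi(\CM,b)$ is a Boolean combination of sets of the form $\{x<\tau_i(b)\}$ and $\{x=\tau_i(b)\}$, with $N$ depending only on $\varphi$. Given a finite $B \subseteq M^{|y|}$ and $q\in S_\varphi(B)$ realized by some $a$, consider $T_B:=\{\tau_i(b): b\in B,\ i\leq N,\ \tau_i(b)\text{ defined}\}$. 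Either $a\in T_B$, in which case pick $b^*\in B$ and an index $j$ with $a=\tau_j(b^*)$ and recover the type by the $\CL(b^*)$-formula
\[
 d\varphi_j(y; b^*) \;:=\; \exists z\,\bigl(z=\tau_j(b^*)\,\wedge\,\varphi(z,y)\bigr);
\]
or $a$ lies in a maximal open gap of $T_B$, in which case pick $b^*\in B$ and $j$ so that $\tau_j(b^*)$ is the right endpoint of the gap (with symmetric conventions for the unbounded cases). Then for each $b\in B$ the formula $\varphi(x,b)$ is either identically true or identically false on any sufficiently short open interval immediately below $\tau_j(b^*)$, and this ``one-sided germ condition'' is $\CL$-definable uniformly in $y$, yielding a single formula $d\varphi_j(y; b^*)$ defining the type. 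Since $j$ and the boundary cases range over a set depending only on $\varphi$, the finitely many formulas so obtained form the required uniform scheme $\Delta$ with one parameter. For $|x|=d>1$, I would induct by writing $x=(x',x_d)$, applying uniform cell decomposition in $x_d$ over parameters $(x',y)$, and observing that a $\varphi$-type in $x$ is determined by a $\varphi'$-type in $x'$ (a derived formula with $|x'|=d-1$) together with one additional witness from $B$ locating the cell in $x_d$; this yields $|x|$ parameters in total.

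\medskip
For part~(2), the plan is identical with $o$-minimal cell decomposition replaced by Denef's $p$-adic cell decomposition (available for $\QQ_p$ and its finite and analytic extensions, and more generally in $P$-minimal settings admitting cell decomposition). A one-variable $p$-adic cell takes the form
\[
 \bigl\{x : \operatorname{val}(a_1(y)) \,\square_1\, \operatorname{val}(x-c(y)) \,\square_2\, \operatorname{val}(a_2(y)),\ \lambda(x-c(y))\in P_n\bigr\},
\]
with $c, a_1, a_2$ $\emptyset$-definable, $\square_i\in\{<,\leq,\,\text{trivial}\,\}$, $\lambda$ in the field, and $P_n$ the set of $n$-th powers; all cell data depend only on $\varphi$. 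A $\varphi$-type of $a$ over $B$ is then recovered by selecting (i)~a witness $b^*\in B$ maximizing $\operatorname{val}(a-c(b^*))$ (the analogue of ``closest boundary'') and (ii)~a witness $b^{**}\in B$ identifying the valuation interval and $P_n$-coset in which $a-c(b^*)$ lies; replicating the argument from~(1) with these two witnesses gives UDTFS with $2$ parameters for $|x|=1$, and induction on $|x|$ produces $2|x|$ parameters.

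\medskip
The main obstacle I anticipate is keeping the inductive step uniform: after choosing the witness(es), the remaining coordinates live in a cell whose shape depends on the already-fixed parameters, and one must cell-decompose uniformly over those parameters while still landing in a finite family $\Delta$ independent of $B$. This is handled by invoking uniformity of cell decomposition (all cell data are $\emptyset$-definable functions depending only on $\varphi$) and by absorbing the finitely many configurations --- index choices, boundary cases, and $P_n$-coset types in the $p$-adic setting --- into $\Delta$.
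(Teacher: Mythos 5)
This is a cited result (the paper points to [VCD1] and does not prove it), so there is no in-paper proof to compare against; I will evaluate the proposal on its own terms.

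Your high-level strategy --- one-variable structure theory plus induction on $|x|$ --- is indeed the right shape of argument, and your sketch of the $p$-adic case via Denef cell decomposition is broadly in the right direction. However, there is a genuine gap in part (1): the statement covers \emph{weakly} $o$-minimal structures, and for those the crucial input you rely on --- that there are $\emptyset$-definable partial functions $\tau_1(y),\dotsc,\tau_N(y)$ whose values give the boundary points of $\varphi(M,b)$ --- simply fails. In a weakly $o$-minimal structure, $\varphi(M,b)$ is a finite union of convex sets, but their endpoints are in general irrational cuts not realized in $M$, and there is no cell decomposition with definable boundary functions (this is precisely what separates weak $o$-minimality from $o$-minimality). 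Your formulas $d\varphi_j(y;b^*)$ involving the term $\tau_j(b^*)$ are therefore not available, and the ``one-sided germ at $\tau_j(b^*)$'' description of the type does not make sense. A correct proof in the weakly $o$-minimal case must instead encode the cut determined by $a$ using the formula $\varphi$ itself evaluated at a suitable $b^*\in B$ --- roughly, ``$a$ lies in the $i$-th convex component of $\varphi(M,b^*)$, near its right boundary,'' --- rather than via a boundary function, and that replacement is not a cosmetic change. A similar adaptation is needed for quasi-$o$-minimal structures such as Presburger arithmetic, where congruence conditions depending on $b$ must also be absorbed into the index $i$ of the defining scheme.

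Beyond this, the inductive step for $|x|=d>1$ is under-specified in a way that matters: after cell-decomposing in $x_d$ over $(x',y)$, the reduction produces a derived formula $\varphi'(x';y,y^*)$ where $y^*$ ranges over the witness $b^*$, and one needs to apply the induction hypothesis to $\varphi'$-types over a set of the form $B\times\{b^*\}$ and keep track that this still only consumes $d-1$ parameters from $B$. This can be made to work in the plain $o$-minimal case, but as written you acknowledge the issue rather than resolve it, and in the weakly $o$-minimal setting it again leans on the nonexistent $\tau_i$. I would recommend either restricting the claim to $o$-minimal structures with definable Skolem functions (where your sketch is essentially correct modulo the bookkeeping in the inductive step), or reworking the one-variable case around cuts coded by instances of $\varphi$ itself, which is what the argument in [VCD1, Section 6.1] actually does.
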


Finally, we recall global invariant types and their products. We will use some standard model-theoretic notation, e.g. ~ $\mathbb{M} \succ \CM$ will be a saturated elementary extension, and, given a set $A \subseteq \mathbb{M}$, $\dcl(A)$ will denote the model-theoretic algebraic closure of $A$, $A$ will be called \emph{small} if its cardinality is smaller than the saturation of $\mathbb{M}$, etc. Given a tuple of variables $x$, we call complete types in $S_x(\mathbb{M})$ \emph{global}, and we say that a global type $p(x)$ is \emph{$M$-invariant} if it is $\Aut(\mathbb{M} / M)$-invariant (meaning that, for every automorphism $\sigma$ of $\mathbb{M}$ fixing $M$ pointwise, for every $L(\mathbb{M})$-formula $\varphi(x,a)$, we have $\varphi(x,a) \in p \iff \varphi(x, \sigma(a)) \in p$).

\begin{defn}
Given a set of formulas $\Delta$, $d \in \mathbb{N}$, a set of parameters $A \subseteq \mathbb{M}$ and an arbitrary linear order $I$, we say that a sequence 	$(a_i : i \in I)$ of tuples from $\mathbb{M}^d$ is \emph{$\Delta$-indiscernible over $A$} if it is $E$-indiscernible for every relation $E$ of the form $\varphi(x_1, \ldots, x_n;b)$ with $\varphi(x_1, \ldots, x_n;z) \in \Delta$, $|x_i| =d$ for all $1 \leq i \leq n$ and $b \in A^{|z|}$.

If $\Delta$ consists of all formulas, we simply say that the sequence is \emph{indiscernible over $A$}, and if $A = \emptyset$, we say that the sequence is $\Delta$-indiscernible.
\end{defn}

\begin{fact} (See e.g. \cite[Section 2]{hrushovski2011nip} or \cite{simon2015guide}) \label{fac: basic inv types}
	Let $p$ be a global $M$-invariant type. Let the sequence $(c_i : i \in \mathbb N)$ in $\mathbb{M}$ be such that $c_i \models p|_{Mc_{<i}}$ (such a sequence is called a \emph{Morley sequence} in $p$ over $M$). Then the sequence $(c_i : i \in \mathbb{N})$ is indiscernible over $M$ and $\tp((c_i : i \in \mathbb{N})/ M)$ does not depend on the choice of $(c_i)$. Call this type $p^{(\omega)}|_{M}$, and let $p^{(n)}|_M := \tp(c_1, \ldots, c_n / M)$.
\end{fact}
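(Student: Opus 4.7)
The plan is to prove simultaneously, by induction on $n$, the two statements: (i) the type $\tp(c_1, \ldots, c_n / M)$ depends only on $p$ and $M$, not on the particular Morley sequence chosen; and (ii) this type is invariant under order-preserving reindexing, which yields indiscernibility over $M$ in the limit $n \to \infty$.

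For (i), the base case $n=1$ is immediate since $c_1 \models p|_M$ forces $\tp(c_1/M) = p|_M$. For the inductive step, let $(c'_i)$ be another such sequence. By the induction hypothesis, $\tp(c_1, \ldots, c_{n-1}/M) = \tp(c'_1, \ldots, c'_{n-1}/M)$, so by saturation of $\UU$ there is $\sigma \in \Aut(\UU/M)$ with $\sigma(c_i) = c'_i$ for $i < n$. The key point is that $M$-invariance of $p$ means $\sigma(p) = p$, so $\sigma$ maps $p|_{Mc_{<n}}$ bijectively onto $p|_{Mc'_{<n}}$. Since $c_n \models p|_{Mc_{<n}}$, we have $\sigma(c_n) \models p|_{Mc'_{<n}}$; but $c'_n$ also realizes this type, so $\tp(\sigma(c_n)/Mc'_{<n}) = \tp(c'_n/Mc'_{<n})$. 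Combining, $\tp(c_1, \ldots, c_n/M) = \tp(c'_1, \ldots, c'_n/M)$.

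For (ii), given any $i_1 < i_2 < \ldots < i_n$, observe that the subsequence $(c_{i_1}, \ldots, c_{i_n})$ is itself a Morley sequence of $p$ over $M$: since $p|_{Mc_{<i_k}}$ extends $p|_{Mc_{i_1} \cdots c_{i_{k-1}}}$, the condition $c_{i_k} \models p|_{Mc_{<i_k}}$ yields $c_{i_k} \models p|_{Mc_{i_1} \cdots c_{i_{k-1}}}$. Applying (i) to this subsequence shows its type over $M$ agrees with that of $(c_1, \ldots, c_n)$, which is exactly indiscernibility over $M$. The main conceptual point throughout is the use of $M$-invariance to justify $\sigma(p|_B) = p|_{\sigma(B)}$; without this, the inductive step of (i) collapses, so this is the only delicate ingredient.
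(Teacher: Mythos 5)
Your proof is correct. Note that the paper does not prove this statement itself: it is cited as a Fact from the literature (Hrushovski--Pillay, Simon's guide), so there is no in-paper proof to compare against. Your argument — induction on $n$ to show $\tp(c_1,\ldots,c_n/M)$ is independent of the Morley sequence, using strong homogeneity of $\UU$ to produce $\sigma \in \Aut(\UU/M)$ with $\sigma(c_{<n}) = c'_{<n}$ and then invoking $M$-invariance in the form $\sigma(p|_{Mc_{<n}}) = p|_{Mc'_{<n}}$; followed by the observation that any increasing subsequence of a Morley sequence is again a Morley sequence — is exactly the standard argument found in those references, and you have correctly identified the one non-trivial ingredient (the equivariance $\sigma(p|_B) = p|_{\sigma(B)}$ for $\sigma \in \Aut(\UU/M)$, which fails without $M$-invariance of $p$).
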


\section{Bukh-Matousek theorem in polynomially bounded $o$-minimal expansions of $\RR$} \label{sec: BukhMat in polybdd}

First we prove a general lemma about NIP structures, which is a finitary version of Shelah's ``shrinking of indiscernibles'' \cite{MR2062198}.

\begin{lem} \label{lem: finitary shrinking}
Let $\CM$ be an NIP structure, and let $\varphi(x_1, \ldots, x_{n};y)$ be a  formula with $|x_1| = \ldots = |x_{n}|=d$. Then there are some $k, l \in \mathbb{N}$ and a finite set of formulas $\Delta$ in the variables $x_1, \ldots, x_{l}$ with $|x_i|=d$ such that for any finite $\Delta$-indiscernible sequence $(a_i)_{i < N}$ in $M^d$ and any $b \in M^{|y|}$ there are $0=j_0<j_1<\dotsc <j_{k'}=N-1$ with $k'
\leq k$ such that for every $s\in\{0,\dotsc,k'-1\}$ the sequence $(a_i : j_s< i <j_{s+1})$ is 
 $\varphi(x_1,\ldots, x_{n},b)$-indiscernible. 

In particular, for any $N$ large enough and any $b\in M^{|y|}$, any finite $\Delta$-indiscernible sequence of elements in $M^d$ of length $N$ contains a $\varphi(x_1,\dotsc,x_n,b)$-indiscernible subsequence of length at least $\frac{N-(k+1)}{k}$.
\end{lem}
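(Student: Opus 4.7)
The plan is to prove this as a finitary uniform-in-$b$ version of Shelah's shrinking of indiscernibles in NIP theories, passing to a monster model $\UU \succ \CM$. First I establish an infinitary analogue: there is $k = k(\varphi) \in \NN$ such that for every $\emptyset$-indiscernible sequence $(a_i)_{i \in \omega}$ in $\UU^d$ and every $b \in \UU^{|y|}$, one can remove at most $k-1$ indices from $\omega$ so that the $\leq k$ complementary convex pieces are each $\varphi(x_1,\dotsc,x_n,b)$-indiscernible. For fixed $b$, the existence of some finite such number of break points is Shelah's classical shrinking of indiscernibles. The uniformity of the bound in $b$ is the content of the alternation-number characterisation of NIP: if no uniform $k$ existed, then compactness combined with a Ramsey argument on a sequence of witnesses would yield an $\emptyset$-indiscernible sequence and a single parameter along which $\varphi$ displayed arbitrarily complex alternation, from which one reads off an independence-property configuration, contradicting NIP.

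For the finitary statement I argue by contradiction and compactness. Fix $k$ from the previous step and suppose no finite $\CL$-formula set $\Delta$ in variables $x_1,\dotsc,x_l$ (of any length $l$) works with this $k$. Consider the partial type $\Sigma\bigl((x_i)_{i\in\omega},y\bigr)$ over $\emptyset$ consisting of: the full $\emptyset$-indiscernibility schema $\psi(x_{i_1},\dotsc,x_{i_l})\leftrightarrow\psi(x_{j_1},\dotsc,x_{j_l})$ for every $\CL$-formula $\psi$ and every pair of strictly increasing $l$-tuples; and, for every $L \in \NN$, a single $\CL$-formula $\Phi_L(x_0,\dotsc,x_{L-1},y)$ asserting that the finite sequence $(x_0,\dotsc,x_{L-1})$ admits no decomposition with $k'\leq k$ of the form required by the lemma. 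Each such $\Phi_L$ is a finite Boolean combination of instances of $\varphi(x_{\vec i},y) \leftrightarrow \neg\varphi(x_{\vec i'},y)$ ranging over the finitely many possible choices of break points and witnessing $n$-tuples in $[0,L-1]$, hence is a bona fide $\CL$-formula. Every finite subset of $\Sigma$ mentions only finitely many $\psi$'s (comprising some finite $\Delta_0$ in $l_0$ variables) and some finite $\Phi_L$, and is realised by a $\Delta_0$-indiscernible sequence of length $\geq L$ together with a bad parameter $b \in M$ supplied by the contradiction hypothesis. So $\Sigma$ is finitely satisfiable; by compactness it is realised in $\UU$, contradicting the infinitary step.

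The ``in particular'' clause is then an immediate pigeonhole: at most $k'+1\leq k+1$ elements of the $N$-sequence are discarded (the $k'-1$ interior break points together with the two endpoints $a_0,a_{N-1}$), and the remaining $\geq N-(k+1)$ elements are distributed among $k'\leq k$ open intervals, so the longest has length at least $(N-(k+1))/k$. The main obstacle is securing the uniform bound $k$ in the infinitary step: without uniformity in $b$, the compactness argument in the second step would only force many break points \emph{in aggregate} across varying parameters, which is consistent with each individual $b$ admitting only finitely many. Once the uniform bound is in hand, the remainder is essentially a routine compactness exercise.
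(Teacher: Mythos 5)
Your approach is genuinely different from the paper's. The paper never passes through Shelah's infinitary shrinking of indiscernibles: instead it applies UDTFS (Fact~\ref{fac: UDTFS}) to $\varphi^{\op}(y;x_1,\dotsc,x_n)$, so that for any finite set $A$ and any $b$ the $\varphi^{\op}$-type of $b$ over $A^n$ is defined by a fixed $\psi\in\Delta$ with $m$ parameters from $A^n$, and then reads off the breakpoints \emph{explicitly} as the (at most $mn$) coordinates of those parameters inside the sequence, together with the two endpoints. This is a one-paragraph direct argument yielding concrete constants $l=n+mn$ and $k=mn+2$. Your route --- establish a uniform-in-$b$ infinitary shrinking in the monster and then descend by compactness --- is a recognised strategy, and the infinitary step you sketch is correct and standard (the uniform bound on the number of convex pieces follows from the alternation-rank characterisation of NIP together with a compactness argument over $b$).

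However, the compactness descent has a genuine gap. With $k$ fixed from the infinitary step, you suppose no finite $\Delta$ works and build a type $\Sigma\bigl((x_i)_{i\in\omega},y\bigr)$ containing the full $\emptyset$-indiscernibility schema together with $\Phi_L(x_0,\dotsc,x_{L-1},y)$ for every $L$, where $\Phi_L$ asserts that the length-$L$ initial segment admits no decomposition into $\leq k$ convex $\varphi(\cdot,b)$-indiscernible pieces. The realization of $\Sigma$ does contradict the infinitary step, so $\Sigma$ is inconsistent; but when you unwind the inconsistency of a finite subset $\Sigma_0$ (mentioning $\Delta_0$ and $\Phi_{L_1},\dotsc,\Phi_{L_r}$ with $L_1<\dotsb<L_r$), the conclusion you obtain is only that every $\Delta_0$-indiscernible sequence (of length at least $L_r$), for every $b$, has some \emph{initial segment of length $L_j$} admitting a decomposition. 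This is strictly weaker than the lemma, which demands a decomposition of the \emph{whole} sequence: a decomposition of a proper initial segment says nothing about the later terms, and $\Delta_0$-indiscernibility alone does not let you extend the last piece past index $L_j-1$ because the parameter $b$ is not controlled by $\Delta_0$. (Equivalently, on the finite-satisfiability side, your contradiction hypothesis only hands you \emph{one} bad $\Delta_0$-indiscernible sequence of \emph{some} length, whereas satisfying $\Sigma_0$ requires a $\Delta_0$-indiscernible sequence of length at least $L_r$ whose length-$L_1$ initial segment is already bad; nothing guarantees such a sequence exists.) The fix is either to reformulate $\Sigma$ so that inconsistency yields a decomposition of the entire finite sequence, which requires real care with how the $\Phi_L$ interact, or to abandon the contradiction-plus-compactness scheme and extract $\Delta$ constructively --- which is exactly what the UDTFS route does, and more cheaply.
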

\begin{proof}
To simplify the notation we assume $d=1$. 

By  UDTFS (Fact \ref{fac: UDTFS}) applied to the formula $\varphi^{\op}(y; x_1,\dotsc,x_n) := \varphi(x_1,\dotsc,x_n;y)$,
there is 
a finite set of formulas $\Delta(x_1, \ldots, x_{n};\bar{x}_1, \ldots, \bar{x}_m)$ with $|{\bar{x}}_i|=n$ such that,  
for any finite set $A\subseteq M$ and $b\in M^{|y|}$, the $\varphi^{\op}$-type of $b$ over $A^n$  
is definable by an instance of some $\psi\in \Delta$ with parameters from $A^n$. That is, there are some $\bar c_1, \dotsc, \bar c_m\in A^n$,  such that, for all $a_1,\dotsc,a_n\in A$, we have 
$\models \varphi(a_1,\dotsc,a_n;b)$ if and only if  $\models\psi(a_1,\dotsc,a_n;\bar c_1, \dotsc, \bar c_m)$.

Writing each $n$-tuple $\bar x_i$, $i=1,\dotsc,m$, as $n$ single variables in every  $\psi\in \Delta$, we can view
$\Delta$ as a finite set of formulas in the variables $x_1,\dotsc,x_l$, where $l=n+mn$. 

Let $(a_i)_{i< N}$ be a finite $\Delta$-indiscernible sequence, $b\in M^{|y|}$, and $A=\{ a_i \colon i< N\}$.  
We choose $\psi\in \Delta$ and $c_{n+1},\dotsc c_l\in A$ such that for all $c_1,\dotsc c_n\in A$ we have 
$\CM \models\varphi(c_1,\dotsc,c_n;b)$ if and only if $\CM \models \psi(c_1,\dotsc,c_n,c_{n+1},\dotsc,c_l)$. 

We choose $0=j_0<j_1<\dotsc <j_{k'}=N-1$ with $k'\leq (l-n)+2=mn+2$ so that 
$\{a_{j_s} \colon  s=0,\dotsc,k'\}=\{ c_i \colon i=n+1,\dotsc,l\}\cup\{ a_0,a_{N-1}\}$.

Since $(a_i)_{i<N}$ is $\psi$-indiscernible, it follows that for any $0 \leq i_1 < \ldots < i_n < N$ the truth value of $\psi(a_{i_1}, \ldots, a_{i_{n}}; c_{n_1}, \ldots, c_l)$, and so of $\varphi(a_{i_1}, \ldots, a_{i_{n}}; b)$, is determined by the quantifier-free order type of $(i_1, \ldots, i_{n})$ over $\{ j_s : s=0,\dotsc k'\}$. 
The conclusion of the lemma follows taking $k := mn+2$.
\end{proof}

From now on we work in a polynomially bounded   $o$-minimal expansion  $\CR=\la \RR, <,  \dotsc\ra$ of the field of
real numbers. Let $T = \Th(\CR)$ and let $\mathbb{M} \succ \CR$ be a big saturated model.

As $T$ has Skolem functions (see e.g. \cite{van1998tame}), it follows that for all $\CM \prec \mathbb{M}$ and 
$\bar a\in\mathbb{M}^n$, the set 
$$ M \langle \bar a  \rangle = \{ f(\bar a) : f(x) \textrm{ is an } M \textrm{-definable function} \}$$
is an elementary substructure of $\mathbb{M}$.

Let $\tilde p(x) \in S_1(\mathbb{M})$ be the global type of ``$+ \infty$'', i.e. $\tilde p$ is the unique complete global type such that $\tilde p \vdash x>m$ for every $m\in \mathbb{M}$  (uniqueness is by $o$-minimality). It is  invariant over $\emptyset$ (as the set of formulas $\{ m < x : m \in \mathbb{M} \}$ is clearly $\Aut(\UU /\emptyset)$-invariant).

The following fact is obvious. 

\begin{fact}\label{fact:obvious}
For every $\CM \prec \mathbb{M}$, an element $\alpha\in \mathbb{M}$ realizes $\tilde p(x) |_M$ if and only if 
$\alpha >m$ for every $m\in M$.   
\end{fact}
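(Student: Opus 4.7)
The plan is to give a direct argument via $o$-minimality. The forward direction is immediate from the definition of $\tilde p$, so the real content is the backward direction: showing that any $\alpha\in\mathbb{M}$ lying above all of $M$ already satisfies every $L(M)$-formula in $\tilde p$.

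First I would dispose of the easy direction. Suppose $\alpha\models \tilde p|_M$. By definition of $\tilde p$, for every $m\in\mathbb{M}$ the formula $x>m$ lies in $\tilde p$; restricting to parameters from $M$, in particular the formula $x>m$ lies in $\tilde p|_M$ for every $m\in M$, so $\alpha>m$ for every $m\in M$.

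For the backward direction, assume $\alpha>m$ for every $m\in M$, and let $\varphi(x)$ be an arbitrary $L(M)$-formula. By $o$-minimality of $\mathbb{M}$, the definable set $\varphi(\mathbb{M})$ is a finite union of points and open intervals whose endpoints are $M$-definable, hence (since $M\prec \mathbb{M}$ is a model of the $o$-minimal theory $T$) lie in $M\cup\{\pm\infty\}$. Consequently, exactly one of two cases occurs: either $\varphi(\mathbb{M})$ contains a tail $(m_0,+\infty)$ with $m_0\in M\cup\{-\infty\}$, or $\varphi(\mathbb{M})$ is bounded above by some $m_1\in M$. In the first case, $\varphi(x)\in\tilde p$ (because $\tilde p\vdash x>m_0$ forces $\varphi$), and since $\alpha>m_0$ by hypothesis, $\mathbb{M}\models\varphi(\alpha)$. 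In the second case, $\neg\varphi(x)\in\tilde p$ (by the same reasoning applied to the complement), and since $\alpha>m_1$, $\mathbb{M}\models\neg\varphi(\alpha)$. Either way, $\alpha$ agrees with $\tilde p|_M$ on $\varphi$, so $\alpha\models\tilde p|_M$.

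I do not anticipate any real obstacle; the only thing to be careful about is the implicit use of the fact that endpoints of cells of an $M$-definable subset of $\mathbb{M}$ belong to $M\cup\{\pm\infty\}$, which is the standard consequence of Skolem functions for $T$ (already invoked in the preceding paragraph of the paper to ensure $M\langle\bar a\rangle\prec \mathbb{M}$). Hence the statement is indeed immediate from $o$-minimality and the definition of $\tilde p$, justifying the authors' remark that the fact is obvious.
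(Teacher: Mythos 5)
Your proof is correct, and since the paper simply declares this fact ``obvious'' without giving an argument, you have supplied the natural one: by $o$-minimality, $\varphi(\mathbb{M})$ is a finite union of points and intervals whose endpoints lie in $M \cup \{\pm\infty\}$, so either $\varphi(\mathbb{M})$ contains a tail $(m_0, +\infty)$ with $m_0 \in M \cup \{-\infty\}$ (whence $\varphi \in \tilde p$ and $\alpha \models \varphi$) or it is bounded above by some $m_1 \in M$ (whence $\neg\varphi \in \tilde p$ and $\alpha \models \neg\varphi$), and in either case $\alpha$ and $\tilde p$ agree on $\varphi$.

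One small correction of attribution: the reason the endpoints lie in $M \cup \{\pm\infty\}$ is not Skolem functions per se but simply that they are $M$-definable (the boundary of a one-variable definable set is finite, hence each endpoint is definable over the parameters) and $\dcl(M) = M$ whenever $M \prec \mathbb{M}$, which holds for any elementary substructure of any theory. Skolem functions are invoked in the surrounding text of the paper for a different purpose, namely to guarantee that $M\langle \bar a \rangle$ is itself an elementary substructure, and are not needed for the present fact.
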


Since  polynomial boundedness is preserved under elementarily equivalence (see \cite[Theorems A and B]{growth}) we have the following fact.
\begin{fact}  \label{fac: poly bdd growth}
If $\CM \prec \mathbb{M}$ and $\alpha \models \tilde{p}|_M$, then the set $\{ \alpha^n : n \in \mathbb{N} \}$ is cofinal in $M \langle \alpha \rangle$, i.e. for every $m \in M \langle \alpha \rangle $ there is some $n \in \NN$ such that $m < \alpha^n$.
\end{fact}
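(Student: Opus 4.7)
The plan is to combine two ingredients: the explicit description of $M\langle\alpha\rangle$ as the image of $\alpha$ under $M$-definable unary functions (which uses the existence of definable Skolem functions in $T$, as recalled just before the statement), and the transfer of polynomial boundedness along elementary equivalence cited in \cite[Theorems A and B]{growth}.

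First, I would fix an arbitrary $m\in M\langle\alpha\rangle$ and, by the definition recalled in the excerpt, write $m=f(\alpha)$ for some $M$-definable unary function $f\colon\mathbb{M}\to\mathbb{M}$. The task then reduces to producing $n\in\NN$ with $f(\alpha)\leq\alpha^n$.

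Next, I would invoke the cited transfer result: since $\CR$ is polynomially bounded, so is every model of $T=\Th(\CR)$, in the sense that every unary definable function (with parameters from the model) is eventually dominated by some power function. Applied to the $M$-definable function $f$ inside $\CM$, this yields an integer $N\in\NN$ and some $k\in M$ for which $\CM\models\forall x\,(x>k\rightarrow|f(x)|\leq x^N)$; by elementarity the same sentence holds in $\mathbb{M}$. Since $\alpha\models\tilde p|_M$ implies $\alpha>k$ by Fact \ref{fact:obvious}, we conclude $m\leq|m|=|f(\alpha)|\leq\alpha^N$, and $n:=N$ works.

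The only non-trivial input is therefore the transfer of polynomial boundedness to arbitrary models of $T$. This is the ``main obstacle'' in the sense that polynomial boundedness of $\CR$ alone gives no control over unary definable functions of $\mathbb{M}$ evaluated at the non-standard element $\alpha$; the content of \cite{growth} is precisely that the property is first-order, which is what makes the above argument work uniformly across the elementary class.
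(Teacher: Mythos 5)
Your proof is correct and is the natural unpacking of the paper's one-line justification, which simply cites the elementary transfer of polynomial boundedness from $\CR$ to all models of $T$ and treats the rest (writing $m=f(\alpha)$ for an $M$-definable $f$, bounding $f$ by a power in $\CM$, transferring to $\mathbb{M}$, and evaluating at $\alpha$) as routine. One tiny slip: you only derive $m\leq\alpha^{N}$, so take $n:=N+1$ (using $\alpha>1$, which holds since $1\in M$ and $\alpha$ exceeds every element of $M$) to obtain the required strict inequality $m<\alpha^{n}$.
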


\begin{lem}\label{lem: infty1 iff R-growing}
  	Let $\CM \prec \mathbb{M}$ and $\alpha_1,\dotsc,\alpha_n \in \mathbb{M}$.  Then $(\alpha_1,\dotsc \alpha_n)$
realizes $\tilde p^{(n)}|_M$ if and only if $\alpha_1> m$ for all $m\in M$ and $\alpha_{i+1}>\alpha_i^k$ for all $k\in \NN$ and $i=1,\dotsc,n-1$. 
\end{lem}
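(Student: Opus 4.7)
My plan is to prove the equivalence by induction on $n$, peeling off the last coordinate at each inductive step.

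For the base case $n=1$, the statement $(\alpha_1) \models \tilde p^{(1)}|_M = \tilde p|_M$ is immediately equivalent to $\alpha_1 > m$ for all $m \in M$ by Fact \ref{fact:obvious}, and no growth condition is required.

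For the inductive step, I unwind the definition of a Morley sequence (Fact \ref{fac: basic inv types}): $(\alpha_1,\dotsc,\alpha_n) \models \tilde p^{(n)}|_M$ is equivalent to $(\alpha_1,\dotsc,\alpha_{n-1}) \models \tilde p^{(n-1)}|_M$ together with $\alpha_n \models \tilde p|_{M\alpha_1\dotsc\alpha_{n-1}}$. By the induction hypothesis, the first conjunct translates exactly to the growth conditions up through index $i = n-2$, so it remains to show that the second conjunct is equivalent to $\alpha_n > \alpha_{n-1}^k$ for all $k \in \NN$. Because $T$ has definable Skolem functions, the set $M' := M\langle \alpha_1,\dotsc,\alpha_{n-1}\rangle$ is an elementary substructure of $\UU$, and $\alpha_n \models \tilde p|_{M\alpha_1\dotsc\alpha_{n-1}}$ is equivalent to $\alpha_n \models \tilde p|_{M'}$, which by Fact \ref{fact:obvious} applied to $M'$ says exactly that $\alpha_n > m$ for every $m \in M'$.

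The remaining step is to replace quantification over $m \in M'$ with quantification over the powers $\alpha_{n-1}^k$. Here I invoke Fact \ref{fac: poly bdd growth} with base model $M_0 := M\langle\alpha_1,\dotsc,\alpha_{n-2}\rangle$ and generator $\alpha_{n-1}$: the induction hypothesis ensures $\alpha_{n-1} \models \tilde p|_{M_0}$, so the set $\{\alpha_{n-1}^k : k \in \NN\}$ is cofinal in $M_0\langle\alpha_{n-1}\rangle = M'$. Therefore $\alpha_n > m$ for all $m \in M'$ if and only if $\alpha_n > \alpha_{n-1}^k$ for all $k \in \NN$, completing the induction.

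I do not anticipate a real obstacle: the argument is a direct combination of the $\Aut(\UU/\emptyset)$-invariance of $\tilde p$, the defining property of Morley sequences, and the two preparatory facts. The only point requiring care is bookkeeping of which model plays the role of ``$M$'' in each invocation of Fact \ref{fact:obvious} and Fact \ref{fac: poly bdd growth}, in particular using Skolemization to ensure that $M\langle \alpha_1,\dotsc,\alpha_{n-1}\rangle$ is an elementary submodel over which $\tilde p$ has a canonical meaning.
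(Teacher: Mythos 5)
Your proof is correct and follows essentially the same route as the paper's: decompose the Morley sequence condition into one step at a time using $\dcl$-closures built via Skolem functions, then translate each step into a cofinality condition via Fact~\ref{fact:obvious} and Fact~\ref{fac: poly bdd growth}. The paper phrases this as a direct chain ($M_0 = M$, $M_i = M_{i-1}\langle\alpha_i\rangle$, and $(\alpha_1,\dotsc,\alpha_n) \models \tilde p^{(n)}|_M$ iff each $\alpha_{i+1}$ realizes $\tilde p|_{M_i}$) rather than as an explicit induction, but the content is identical.
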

\begin{proof}
Let $M_0=M$, and for $i=1,\dotsc n-1$ let $M_i=M_{i-1}\langle \alpha_i\rangle$.  

Obviously  for any $A \subset \mathbb M$ an element $\alpha\in \mathbb{M}$ realizes $\tilde p|_A$ if and only if it realizes 
$\tilde{p}|_{\dcl(A)}$. Thus  $(\alpha_1,\dotsc \alpha_n)$
realizes $\tilde p^{(n)}|_M$  if and only if $\alpha_{i+1}$ realizes $\tilde{p}|_{M_i}$ for $i=0,\dotsc,n-1$, and the lemma follows from Facts \ref{fact:obvious} and \ref{fac: poly bdd growth}.
\end{proof}

In view of the above  lemma, we define ``finitary'' approximations to a realization of $\tilde{p}^{(n)}|_\CR$.

\begin{defn}[Definition 2.1 \cite{bukh2014erdHos}] Let $h>2$ be a real number. A
  sequence $\vec a=(a_1,\dotsc,a_n)$  in $\CR$ is called {\em $h$-growing} if
  $a_1\geq h$ and $a_{i+1}\geq a_i^h$ for $i=1,\dotsc n-1$.
\end{defn}

Notice that any subsequence of an $h$-growing sequence is $h$-growing as well.

\begin{lem} \label{lem: o-min indisc without params}
For any finite set of formulas $\Delta(x_1,\dotsc,x_l)$ with
  parameters from $\RR$ there is some $ h \in
  \RR$ such that any $h$-growing sequence $(a_i : i=1, \ldots, N)$ of elements in
  $\RR$ is $\Delta$-indiscernible.
  \end{lem}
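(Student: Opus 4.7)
The plan is to argue by contradiction, using saturation in $\mathbb{M}$ to produce two tuples that both realize $\tilde{p}^{(l)}|_{\CR}$ yet disagree on some formula from $\Delta$, contradicting that $\tilde{p}^{(l)}|_{\CR}$ is a complete type over $\CR$ (Fact \ref{fac: basic inv types}).

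First I would reduce to sequences of length exactly $l$. A sequence $(a_1,\dots,a_N)$ of length $N\geq l$ is $\Delta$-indiscernible iff all its increasing $l$-element subsequences satisfy the same formulas from $\Delta$; and any subsequence of an $h$-growing sequence is again $h$-growing. So it suffices to find $h\in\RR$ such that any two $h$-growing $l$-tuples in $\RR$ satisfy exactly the same formulas in $\Delta$.

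Next, suppose for contradiction that no such $h$ exists. Since $\Delta$ is finite, a pigeonhole argument gives a single $\psi(x_1,\dotsc,x_l)\in\Delta$ (with parameters from $\RR$) and, for unboundedly many $h\in\NN$, $h$-growing $l$-tuples $\vec b^{\,h},\vec c^{\,h}\in\RR^l$ with $\models \psi(\vec b^{\,h})\wedge\neg\psi(\vec c^{\,h})$. Consider in $\mathbb{M}$ the partial type $\Sigma(\vec x,\vec y)$ over $\CR$ consisting of $\psi(\vec x)\wedge\neg\psi(\vec y)$ together with, for every $n\in\NN$ and $i=1,\dotsc,l-1$, the formulas $x_1>n$, $y_1>n$, $x_{i+1}>x_i^n$, and $y_{i+1}>y_i^n$. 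Each finite fragment of $\Sigma$ is realized in $\RR$ by some pair $(\vec b^{\,h},\vec c^{\,h})$ with $h$ sufficiently large, so by saturation of $\mathbb{M}$ there is a realization $(\vec b,\vec c)\in\mathbb{M}^{2l}$ of $\Sigma$.

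Finally, by the Archimedean property of $\RR$, the conditions $b_1>n$ for all $n\in\NN$ force $b_1>r$ for every $r\in\RR$, and similarly for $c_1$; and the conditions $b_{i+1}>b_i^n$ for all $n\in\NN$ are precisely those needed to invoke Lemma \ref{lem: infty1 iff R-growing}. That lemma then yields that both $\vec b$ and $\vec c$ realize $\tilde{p}^{(l)}|_{\CR}$. By Fact \ref{fac: basic inv types} this common type is a single complete type over $\CR$, so $\tp(\vec b/\CR)=\tp(\vec c/\CR)$; since the parameters of $\psi$ lie in $\RR$, this contradicts $\models \psi(\vec b)\wedge\neg\psi(\vec c)$. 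The only delicate point, where the hypothesis of polynomial boundedness is used, is in verifying that the countable family $\{b_{i+1}>b_i^n : n\in\NN\}$ is enough to make $b_{i+1}$ realize $\tilde p|_{\CR\langle b_1,\dotsc,b_i\rangle}$; this is exactly Fact \ref{fac: poly bdd growth}, which ensures that the powers $b_i^{\NN}$ are cofinal in $\CR\langle b_1,\dotsc,b_i\rangle$.
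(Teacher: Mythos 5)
Your proof is correct and rests on the same ingredients as the paper's: Lemma \ref{lem: infty1 iff R-growing} together with Fact \ref{fac: basic inv types}, plus a compactness-type argument to pass to a finite growth rate $h$. The only difference is packaging: the paper argues directly, extracting a finite fragment $\Sigma_0$ of the growth conditions (in $2l$ variables, using indiscernibility of a Morley sequence realizing $\tilde p^{(2l)}|_{\RR}$) so that a suitable $h$ can be read off explicitly, whereas you argue by contradiction via saturation and the completeness of the type $\tilde p^{(l)}|_{\RR}$, which yields $h$ non-constructively --- equally sufficient for the lemma as stated and for its later uses.
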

 
 \begin{proof}
   Consider the (partial) type 
 \[ \Sigma(x_1,\dotsc,x_{2l}) = \left\{ x_1 > n \land \bigwedge_{i=1}^{2l-1} (x_{i+1} > x_i^{n}) : n \in \NN \right\}.\]
  
 By Fact \ref{fac: basic inv types}, for any $N \in \NN$, if $(a_1, \ldots, a_N) \models \tilde{p}^{(N)}|_M$, then the sequence $(a_1, \ldots, a_N)$ is indiscernible.  Together with Lemma \ref{lem: infty1 iff R-growing} this implies that 
 \[ \Sigma(x_1, \ldots, x_{2l}) \vdash \psi(x_1, \ldots, x_l ) \leftrightarrow \psi(x_{i_1}, \ldots, x_{i_l} ) \]
    for any $1 \leq i_1 < i_2 < \ldots <i_l \leq 2l$ and $\psi \in \Delta$.
  By compactness, this holds with $\Sigma$ replaced by some finite subset $\Sigma_0$. But then, if $a_1, \ldots, a_N$ is an $h$-growing sequence and $h$ is larger than the largest $n$ appearing in $\Sigma_0$, then every increasing $2l$-tuple from $a_1, \ldots, a_N$ satisfies $\Sigma_0$, hence $a_1, \ldots, a_N$ is $\Delta(x_1, \ldots, x_l)$-indiscernible.
 \end{proof}
 
 Combining Lemma \ref{lem: o-min indisc without params} with Lemma \ref{lem: finitary shrinking} we can allow additional parameters in $\Delta$.

\begin{cor} \label{cor: o-min indisc with params}
For any finite set of formulas $\Delta(x_1,\dotsc,x_l; y)$ with
  parameters from $\RR$ there is some $ h \in
  \RR$ and $m \in \NN$ such that,  for any $h$-growing sequence of elements
  $\vec{a} =  (a_i  :  i=1, \ldots, N)$ in $\RR$ with $N$ large enough and for any $b \in \RR^{|y|}$, $\vec{a}$ contains a  $\Delta(x_1, \ldots, x_l; b)$-indiscernible subsequence of length $\frac{N}{m}$.
 \end{cor}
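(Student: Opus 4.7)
The plan is to chain the two preceding lemmas: first use Lemma \ref{lem: finitary shrinking} to convert the problem into one about indiscernibility for a parameter-free finite set of formulas, and then invoke Lemma \ref{lem: o-min indisc without params} to produce $h$. Concretely, I would enumerate $\Delta = \{\varphi_1, \ldots, \varphi_r\}$ and, for each $i$, apply Lemma \ref{lem: finitary shrinking} to $\varphi_i(x_1, \ldots, x_l; y)$ inside the NIP structure $\CR$ to obtain some $k_i \in \NN$ together with a finite set of parameter-free formulas $\Delta_i$ in variables $x_1, \ldots, x_{l_i}$, such that for $N$ large enough, any $\Delta_i$-indiscernible sequence of length $N$ contains, for every $b \in \RR^{|y|}$, a $\varphi_i(x; b)$-indiscernible subsequence of length at least $(N-(k_i+1))/k_i$.

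Next, I would set $\Delta' := \bigcup_{i=1}^r \Delta_i$, regarded as a finite set of parameter-free formulas in a common tuple $x_1, \ldots, x_L$ (padding with dummy variables as needed), and apply Lemma \ref{lem: o-min indisc without params} to $\Delta'$ to extract some $h \in \RR$ such that every $h$-growing sequence in $\RR$ is $\Delta'$-indiscernible, hence $\Delta_i$-indiscernible for each $i = 1, \ldots, r$.

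Given an $h$-growing sequence $\vec a$ of length $N$ and any fixed $b \in \RR^{|y|}$, I would iteratively shrink: set $\vec a^{(0)} := \vec a$ and, at step $i$, extract from $\vec a^{(i-1)}$ a $\varphi_i(x; b)$-indiscernible subsequence $\vec a^{(i)}$ of length at least $(|\vec a^{(i-1)}| - (k_i+1))/k_i$. The key observation that keeps this iteration going is that any subsequence of an $h$-growing sequence is itself $h$-growing, hence still $\Delta_i$-indiscernible at every step, so Lemma \ref{lem: finitary shrinking} remains applicable. Because $\vec a^{(i)}$ is a subsequence of each earlier $\vec a^{(j)}$, the $\varphi_j(x; b)$-indiscernibility established previously is preserved, so the final $\vec a^{(r)}$ is $\Delta(x_1, \ldots, x_l; b)$-indiscernible. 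A straightforward bookkeeping estimate then gives $|\vec a^{(r)}| \geq N/m$ for some constant $m$ absorbing $\prod_{i=1}^{r} k_i$ together with the additive losses, once $N$ exceeds a threshold depending only on $\Delta$. There is no real obstacle here: the argument is a mechanical combination of the two lemmas, with the only point worth stressing being that subsequences of $h$-growing sequences are $h$-growing, which is precisely what allows the chained applications of Lemma \ref{lem: finitary shrinking} without needing to re-establish $\Delta'$-indiscernibility at each step.
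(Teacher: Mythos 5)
Your proof is correct and follows essentially the same strategy as the paper: invoke Lemma \ref{lem: finitary shrinking} for each $\varphi_i \in \Delta$ to obtain a parameter-free $\Delta_i$, set $\Delta' = \bigcup_i \Delta_i$, extract $h$ from Lemma \ref{lem: o-min indisc without params}, and use that subsequences of $h$-growing sequences remain $h$-growing (hence $\Delta'$-indiscernible) to pass from $\Delta'$-indiscernibility to $\Delta(x;b)$-indiscernibility. The one difference is in how the two lemmas are combined at the end: you shrink sequentially, applying Lemma \ref{lem: finitary shrinking} once per $\varphi_i$ and passing the surviving subsequence to the next stage, which gives $m$ of order $\prod_i k_i$; the paper instead applies Lemma \ref{lem: finitary shrinking} to every $\varphi_i$ simultaneously on the original $h$-growing sequence, collects all the resulting breakpoints $j^\varphi_s$ into one set, and then picks the longest gap between consecutive breakpoints, which on that interval is automatically $\varphi(x;b)$-indiscernible for every $\varphi \in \Delta$ at once; this yields a better constant $m = 2k|\Delta|$ (linear in $|\Delta|$ rather than a product). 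Since the corollary only asserts the existence of \emph{some} $m \in \NN$, both arguments prove the stated result.
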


\begin{proof}
	For every $\varphi(x_1, \ldots, x_l; y) \in \Delta$, let $k_\varphi \in \NN$ and the finite set of formulas $\Delta_\varphi$ be as given by Lemma \ref{lem: finitary shrinking} for $\varphi$, and let $\Delta' = \bigcup_{\varphi \in \Delta} \Delta_\varphi$ and $k = \max \{ k_\varphi : \varphi \in \Delta \}$. Now by Lemma \ref{lem: o-min indisc without params} there is some $h$ such that every $h$-growing sequence $\vec{a} = (a_1, \ldots, a_N)$ of elements from $\RR$ is $\Delta'$-indiscernible. By Lemma \ref{lem: finitary shrinking}, for any $b \in \RR^{|y|}$ we can find an interval $[i^0, i^1]$ in  $[1, N]$ of length at least $\frac{N-(k|\Delta| -2)}{k|\Delta|}$ such that  the sequence $(a_i : i^0 \leq  i \leq i^1)$ is $\Delta(x_1, \ldots, x_l; c)$-indiscernible. 
We can take $m = 2k |{\Delta}|$.
\end{proof}

Finally, the following combinatorial lemma is from
\cite{bukh2014erdHos} (namely, Proposition 2.4 combined with Definition 2.3 there).

\begin{fact}
  \label{prop:comb} 
For every $n$ and $h \geq h_0$, where $h_0$ is a certain absolute constant, there exists $N \leq 2^{h^{2n}}$ such that
for any sequence $\vec a$ of length $N$ there is an $h$-growing
sequence $\vec b$ of length $n$ and $A,B\in \RR$ such that one of the
following sequences is a subsequence of $\vec a$.
\begin{enumerate}
\item $A+Bb_i$, $ i=1,\dotsc, n$.
\item $A+\dfrac{B}{b_i}$, $ i=1,\dotsc, n$.
\item $A+Bb_i$, $ i=n,\dotsc ,1$.
\item $A+\dfrac{B}{b_i}$, $ i=n,\dotsc, 1$.
\end{enumerate}
(Note: the order in (3) and (4) is reversed.)
\end{fact}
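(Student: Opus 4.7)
The plan is to reduce to a monotone subsequence via Erd\H{o}s--Szekeres and then extract an $h$-growing pattern by an iterated pigeonhole on logarithmic scales, with the four affine forms arising from the direction of monotonicity and from whether the selected subsequence turns out to be ``expanding'' or ``contracting''.

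First, by the Erd\H{o}s--Szekeres theorem, $\vec{a}$ contains a monotone subsequence of length at least $\sqrt{N}$. Reversing it if necessary, which interchanges cases (1)--(2) with (3)--(4), I may assume it is strictly increasing, $a_1<\dotsb<a_M$ with $M\geq\sqrt{N}$. The two subcases (1) and (2) then correspond to a further dichotomy: either the extracted subsequence will be \emph{expanding}, meaning $a_{j_i}-A$ itself $h$-grows for an anchor $A$ chosen below all the $a_{j_i}$, or \emph{contracting}, meaning $A-a_{j_i}$ shrinks like $|B|/b_i$ for $A$ chosen above all the $a_{j_i}$.

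To produce the expanding case, I would set $A:=a_1$ and argue greedily: partition the values $a_i-A$ into multiplicative scale buckets of the form $[Bh^k,Bh^{k+1})$ for a suitably chosen normalization $B$, and, by an iterated pigeonhole, find a subsequence whose bucket indices are themselves ``$h$-growing in the scale'', so that $b_i:=(a_{j_i}-A)/B$ satisfies $b_{i+1}\geq b_i^h$ and $b_1\geq h$. The contracting case is symmetric: set $A$ slightly above $a_M$, work with the positive quantities $A-a_i$, and bucket on a reciprocal logarithmic scale to produce the form $A+B/b_i$ with $B<0$.

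The main difficulty, and the source of the doubly exponential bound $N\leq 2^{h^{2n}}$, is that maintaining the condition $b_{i+1}\geq b_i^h$ (rather than merely $b_{i+1}>b_i$) forces the gap between consecutively chosen scale-buckets to itself grow by a factor of at least $h$ at each step, so that $\log N$ must grow exponentially in $n$ rather than only linearly. Carrying out this pigeonhole accounting carefully enough to obtain the sharp exponent $2n$ is exactly the content of Proposition~2.4 of \cite{bukh2014erdHos}.
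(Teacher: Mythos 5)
The first thing to say is that the paper does not prove this statement: it is imported verbatim as a Fact, attributed to Definition 2.3 and Proposition 2.4 of \cite{bukh2014erdHos}, and then used as a black box in the proof of Theorem \ref{thm:main}. So there is no in-paper argument to compare yours against, and the only two legitimate options are to cite Bukh--Matousek, as the paper does, or to actually reprove their proposition. Your proposal does neither: it closes by saying that carrying out the pigeonhole accounting ``is exactly the content of Proposition~2.4 of \cite{bukh2014erdHos}'', which is circular, because that proposition \emph{is} the statement you were asked to prove. As it stands, the proposal is a heuristic outline wrapped around the citation, not a proof.

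As an outline, the opening moves are in the right spirit (Erd\H{o}s--Szekeres to pass to a monotone subsequence, reversal symmetry to exchange forms (1)--(2) with (3)--(4), the expanding/contracting dichotomy behind $A+Bb_i$ versus $A+B/b_i$, and the $\sqrt{N}$ loss is harmless at the scale $2^{h^{2n}}$), but the concrete plan --- fix the anchor $A:=a_1$, or a point just above the maximum, once and for all, and run a single pigeonhole over multiplicative scale buckets --- fails. Consider an increasing sequence with $a_1=0$, $a_N=3$, and all remaining terms lying in $[1,2]$: relative to $A=a_1$ the quantities $a_j-A$ of the middle terms occupy a single multiplicative scale, so any subsequence with $(a_{j_i}-A)/B$ $h$-growing has bounded length (the ratio of consecutive terms would have to exceed $h^{h-1}>2$, while all values lie within a factor $2$ of each other), and the same happens for the distances to an anchor just above the maximum. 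The statement is still true for such a sequence, but only because the affine renormalization $(A,B)$ must be chosen adaptively, inside the cluster --- and making that adaptive (recursive) choice, with the bookkeeping that produces the exponent $2n$, is precisely the content of the Bukh--Matousek argument that your sketch delegates back to the reference. A minor further point: Erd\H{o}s--Szekeres as you invoke it needs pairwise distinct terms, so repeated values should be disposed of separately (e.g.\ a constant subsequence is of the form $A+Bb_i$ with $B=0$).
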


We are ready to prove the main result of the section, generalizing
\cite[Proposition 1.6]{bukh2014erdHos}.

\begin{thm}
  \label{thm:main}
Let $\CR$ be a polynomially bounded $o$-minimal expansion of the real field. Then for any formula
$\varphi(x_1,\dotsc,x_r; z)$ with parameters from $\RR$, with all $x_i$ singletons, there is a
constant $C=C (\varphi)$ such that 
\[R^*_\varphi(n) \leq 2^{2^{Cn}}, \]
for all sufficiently large $n$. 
\end{thm}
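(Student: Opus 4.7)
The plan is to combine the three main ingredients already developed in the section: Corollary \ref{cor: o-min indisc with params}, which extracts long $\Delta$-indiscernible subsequences from $h$-growing sequences for any finite set of formulas $\Delta$; Fact \ref{prop:comb}, which embeds an $h$-growing sequence into any sufficiently long sequence of reals through one of four affine or linear-fractional transformations; and a carefully chosen set $\Delta$ of pullbacks of $\varphi$ that bridges the two. Concretely, I would introduce new singleton variables $y_1, \ldots, y_r, u, v$ and form the finite set $\Delta(y_1, \ldots, y_r; u, v, z)$ consisting of
\[
\varphi(u + v y_1, \ldots, u + v y_r; z),\qquad \varphi(u + v/y_1, \ldots, u + v/y_r; z),
\]
\[
\varphi(u + v y_r, \ldots, u + v y_1; z),\qquad \varphi(u + v/y_r, \ldots, u + v/y_1; z).
\]
The first two formulas correspond to cases (1) and (2) of Fact \ref{prop:comb}; the last two account for the order reversal inherent in cases (3) and (4).

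Next I would apply Corollary \ref{cor: o-min indisc with params} to $\Delta$ to obtain constants $h \geq h_0$ (where $h_0$ is the absolute constant of Fact \ref{prop:comb}) and $m \in \NN$ such that, for all $M$ large enough and every parameter triple $(A,B,b) \in \RR^{2 + |z|}$, any $h$-growing sequence of length $M$ contains a $\Delta(\,\cdot\,; A, B, b)$-indiscernible subsequence of length $M/m$. Then, given $n$, set $M = mn$ and let $N \leq 2^{h^{2M}} = 2^{h^{2mn}}$ be the bound produced by Fact \ref{prop:comb}. For any sequence $\vec{a}$ of length $N$ in $\RR$ and any $b \in \RR^{|z|}$, Fact \ref{prop:comb} yields $A, B \in \RR$ and an $h$-growing sequence $\vec{b}$ of length $mn$ such that the image of $\vec{b}$ under one of the four transformations $f(y) = A+By$ or $A + B/y$ (applied in reverse order in cases (3) and (4)) is a subsequence of $\vec{a}$. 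Feeding $\vec{b}$ and the parameter $(A,B,b)$ into the previous step produces a $\Delta(\,\cdot\,; A, B, b)$-indiscernible subsequence $\vec{b}'$ of $\vec{b}$ of length $n$, whose image $f(\vec{b}')$ is, by the definition of $\Delta$, a $\varphi(\,\cdot\,; b)$-indiscernible subsequence of $\vec{a}$ of length $n$. Since $h$ and $m$ depend only on $\varphi$, the bound $N \leq 2^{h^{2mn}} = 2^{2^{Cn}}$ with $C = 2m \log h$ establishes the theorem.

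The main technical obstacle I anticipate is not any individual step but the bookkeeping that ties them together: one must set $\Delta$ up so that $\Delta$-indiscernibility of $\vec{b}'$ with the specific parameters $(A,B,b)$ translates through each of the four transformations $f$ to $\varphi(\,\cdot\,; b)$-indiscernibility of $f(\vec{b}')$ as a subsequence of $\vec{a}$. The reversal of the ordering in cases (3) and (4) of Fact \ref{prop:comb} is easy to overlook and forces the inclusion of the reversed pullbacks of $\varphi$ into $\Delta$; without them, $\Delta$-indiscernibility would only give a $\varphi$-indiscernible sequence read in the wrong direction. All of the quantitative content of the theorem is already packaged into Fact \ref{prop:comb}, so once this bookkeeping is done the double-exponential bound falls out by a direct calculation.
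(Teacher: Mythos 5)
Your proposal is correct and is essentially the paper's own proof: you build the same finite set $\Delta$ of four pulled-back formulas (including the two reversed ones to handle cases (3) and (4) of Fact~\ref{prop:comb}), feed it into Corollary~\ref{cor: o-min indisc with params} to get $h$ and $m$, run Fact~\ref{prop:comb} with target length $mn$, and then extract a $\Delta(\,\cdot\,;A,B,b)$-indiscernible subsequence of length $n$ whose image is $\varphi(\,\cdot\,;b)$-indiscernible. The only differences from the paper are cosmetic (variable names, and your explicit $C = 2m\log h$ where the paper just says ``an appropriate constant'').
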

\begin{proof}
Let $\Delta(x_1, \ldots, x_r; y_1, y_2,z)$ consist of the formulas
\[ \varphi_1(x_1,\dotsc,x_r;
y_1,y_2,z)=\varphi(y_1+y_2x_1,\dotsc,y_1+y_2x_r;z), \]
\[ \varphi_2(x_1,\dotsc,x_r;
y_1,y_2,z)=\varphi(y_1+\frac{y_2}{x_1},\dotsc,y_1+\frac{y_2}{x_r};z), \]
\[ \varphi_3(x_1,\dotsc,x_r;
y_1,y_2,z)=\varphi(y_1+y_2x_r,\dotsc,y_1+y_2x_1;z), \]
\[ \varphi_4(x_1,\dotsc,x_r;
y_1,y_2,z)=\varphi(y_1+\frac{y_2}{x_r},\dotsc,y_1+\frac{y_2}{x_1};z), \]
and let $h$ and $m$ be as given by Corollary \ref{cor: o-min indisc with params} for $\Delta$. Now assume that $\vec{a}$ is an arbitrary sequence of singletons of length $N = 2^{h^{2mn}}$ (which is bounded by $2^{2^{Cn}}$ for an appropriate constant $C$ depending just on $m,h$), and let $d\in \RR^{|z|}$ be an arbitrary  tuple of elements.

By Fact \ref{prop:comb}, there is some $h$-growing sequence $\vec{b}=(b_i : 1\leq i \leq mn)$ and some $A,B \in \RR$ such that one of the corresponding sequences given by (1)--(4) in Fact \ref{prop:comb} is a subsequence of $\vec{a}$.
By Corollary \ref{cor: o-min indisc with params}, $\vec{b}$ contains a $\Delta(x_1, \ldots, x_r; A,B,d)$-indiscernible subsequence of length $n$. But by the choice of $\Delta$, the corresponding subsequence of $\vec{a}$ must be $\varphi(x_1, \ldots, x_r;d)$-indiscernible.
\end{proof}

\section{Counterexample in $\mathbb{R}_{\exp}$} \label{sec: Counterex in Rexp}

\subsection{Preliminaries}
\label{sec:preliminaries}

We work in the structure $\CM := \Rexp$ in the language $\CL := (<, +,\times, 0,1, \exp(x))$, i.e the  expansion of the field of reals with the exponential
function. It is well known to be $o$-minimal \cite{wilkie1996model}.

Instead of tower notations we use iterated $\log$ and $\exp$.  By
induction on $n$ we define functions $e_n(x)$ and $l_n(x)$ as 
\[ e_0(x)=x,  e_{n+1}(x)=2^{e_n(x)}; \text{ and } l_0(x)=x, 
l_{n+1}(x)=\log (l_n(x)),
 \]
where by $\log$ we always mean $\log_2$.
Obviously $l_n(x)$ is defined for large enough $x$ and it is the compositional inverse  of $e_n(x)$.

Our goal is to prove the following theorem.

\begin{thm}
  \label{thm:mainexp} 
For every $k\geq 3$ there is a relation $E_k(x_1,\dotsc x_k)$ definable in $\Rexp$, with
$x_1, \ldots, x_k$ all singletons, and $c_k>0$ such that 
$R_{E_k}(n)\geq e_{k-2}(c_k
n)$ for all sufficiently large $n$. 
\end{thm}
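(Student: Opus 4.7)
The strategy is a definable adaptation of the classical Erd\H{o}s--Hajnal--Rado stepping-up procedure for hypergraph Ramsey numbers, implemented inductively on $k\geq 3$ using the logarithm of $\Rexp$ in place of the integer floor function from the standard combinatorial construction. This is exactly why the construction fails in any polynomially bounded $o$-minimal expansion of $\RR$ (cf.\ Theorem \ref{thm: BM for polybdd omin}): each inductive step gains one additional $\exp$ in the lower bound by exploiting $\log$.

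For the base case $k=3$, I would encode subsets $S\subseteq [M]$ as reals $y_S := \sum_{i\in S} q^{-i}$ for a fixed large $q$, obtaining a collection of $2^M$ reals in $[0,1]$; the key observation is that, for distinct $S\neq T$, the quantity $-\log_q|y_S - y_T|$ determines $\min(S\triangle T)$ up to an additive error of at most $1$. An $\Rexp$-definable ternary relation $E_3$ built out of these $\log$-levels, mimicking a random or Erd\H{o}s--Hajnal--Rado-type coloring on a trivial base, then produces a sequence of length $2^{c_3 n}$ with no $E_3$-indiscernible subsequence of length $n$.

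For the inductive step, given $E_{k-1}$ with a witnessing sequence $(a_i)_{i\leq M}$ of length $M\geq e_{k-3}(c_{k-1}n)$, I would encode $\{0,1\}^M$ as reals $(y_S)_{S\subseteq [M]}$ via the same dyadic encoding, yielding a length-$2^M$ sequence. For each ordered $k$-tuple $y_{S_1}<\dotsb <y_{S_k}$, put $\ell_i := -\log_q(y_{S_{i+1}} - y_{S_i})$ and set
\[
E_k(y_{S_1},\dotsc,y_{S_k}) \iff E_{k-1}\bigl(a(\ell_1),\dotsc,a(\ell_{k-1})\bigr)\,\wedge\,\Phi(\ell_1,\dotsc,\ell_{k-1}),
\]
where $a(\cdot)$ is an $\Rexp$-definable interpolant of the finite sequence $(a_i)$ and $\Phi$ encodes the canonical order-type condition from the Erd\H{o}s--Hajnal--Rado construction. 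A routine adaptation of the classical analysis then shows that any $E_k$-indiscernible subsequence of length $L$ in $(y_S)$ projects, via the $\ell_i$, to an $E_{k-1}$-indiscernible subsequence of length $\Omega(L)$ in $(a_i)$; since $(a_i)$ admits none of length $n$, we conclude $L=O(n)$, which yields $R_{E_k}(n)\geq 2^M = e_{k-2}(c_k n)$ after absorbing constants.

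The principal obstacle is the absence of a genuine floor function in $\Rexp$: the level $\ell_i$ recovers $\min(S_i\triangle S_{i+1})$ only up to an additive $\pm 1$. I would handle this by taking $q$ large enough that consecutive integer levels correspond to well-separated scales, and by choosing both the interpolant $a(\cdot)$ and the predicate $\Phi$ to be insensitive to $\pm 1$ perturbations of their level arguments; the resulting constant-factor loss is absorbed into $c_k$.
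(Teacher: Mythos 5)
The high-level idea (iterate an exponential stepping-up in which the base-$T$ ``digit'' of a difference is recovered by $\log_T$) is the same as the paper's, but as written the proposal has two genuine gaps that would have to be repaired.

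\textbf{The interpolant makes $E_k$ depend on $n$.} Your inductive step defines
$E_k(y_{S_1},\dotsc,y_{S_k}) \iff E_{k-1}(a(\ell_1),\dotsc,a(\ell_{k-1}))\wedge \Phi(\dotsc)$,
where $\ell_i \approx \min(S_i \triangle S_{i+1})$ is an \emph{index} and $a(\cdot)$ is a definable interpolant of the witnessing sequence $(a_i)_{i\leq M}$. But $(a_i)$, and hence $a(\cdot)$, changes with $n$ (indeed $M = M(n)\to\infty$), so the relation $E_k$ you build is not a single fixed definable relation as the theorem requires: you get a different formula for each $n$. The paper sidesteps this completely by encoding against the \emph{values} rather than the indices, $b = \sum_i \beta_i T^{a_i}$, so that $\log_T(b-c)$ directly approximates $a_{\Delta(b,c)}$ and $E_k$ can be applied to these quantities without any interpolation. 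Moreover the stepped-up formula is written so that it is a ``ratio-of-differences'' (rd-)formula — closed under replacing each variable by $\log_T(u-v)$ — which makes it literally independent of $T$ (Claim~\ref{claim:nice}, Claim~\ref{claim:rdE}), giving one fixed $E_{k+1}$ valid for all $n$. If you want to keep your $q^{-i}$ encoding you would have to change it to $q^{-a_i}$ and then verify the analogue of the rd-formula closure; otherwise the induction produces a family of formulas, not one.

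\textbf{The base case is not a construction.} ``Mimicking a random or Erd\H{o}s--Hajnal--Rado-type coloring on a trivial base'' does not produce a definable ternary relation: a random coloring is not a formula of $\Rexp$, and the classical stepping-up lemma does not start from $k=2$. The paper uses a concrete base case: $E_3 \equiv (x_1 + x_3 - 2x_2\geq 0)$ with witness $1,2,\dotsc,2^n$ (Claim~\ref{claim:k=3}), and then passes to a Behrend subset free of $3$-term arithmetic progressions so that $E_3$ is \emph{robust} on the witness — a notion you do gesture at (``insensitive to $\pm 1$ perturbations'') but never actually secure at the base case, even though it is exactly the hypothesis the inductive step needs in order to absorb the $\log_T$ error. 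Finally, a smaller point: your $E_k$ is a single conjunction $E_{k-1}\wedge\Phi$, whereas the stepping-up argument genuinely needs a three-way disjunction (on increasing levels apply $E_{k-1}$, on decreasing levels apply the reverse, and on a local max/min fix the colour); the claimed ``projection to an $E_{k-1}$-indiscernible subsequence of length $\Omega(L)$'' is precisely the monotonicity/local-extremum case analysis of Claim~\ref{cliam:step-up-arg} and does not follow from a bare conjunction.
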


The proof of the above theorem closely follows the proof of Theorem
1.2 in  \cite{conlon2014ramsey} (see also Theorem 1.3 in \cite{elias2014lower}). In general, the so-called stepping-up lemma of Erd\H{o}s and Hajnal \cite{graham1990ramsey, conlon2010hypergraph} gives a lower bound for $(k+1)$-ary relations which is exponentially larger than the one for $k$-ary relations. In \cite{conlon2014ramsey} it is demonstrated that the stepping-up lemma can be carried out ``semialgebraically'' (the $k$-ary semialgebraic relations that they construct live on $\mathbb{R}^d$, and $d$ grows with $k$, see Fact \ref{fac: lower bounds semialg}). We show that in the structure $\Rexp$ the stepping-up approach can be implemented definably without increasing the dimension (i.e.~ our $k$-ary relations all live on $\mathbb{R}$). But first we discuss some preliminaries.

\subsection{Robustness}
\label{sec:robustness-1}

We will use the notion of robustness from \cite{elias2014lower} (that was
originally called ``depth'' in \cite{conlon2014ramsey}).

\begin{defn}
Let $\varphi(x_1,\dotsc,x_k)$ be an $\CL$-formula and let $\vec a=(a_1,\dotsc, a_n)$ be a
sequence of real numbers. We say that $\varphi$ is \emph{robust} on $\vec a$ if there is
$\varepsilon>0$ such that, for all $1\leq i_1<\dotsb < i_k\leq n$ and all
real numbers $a'_1,\dotsc, a'_k$ with $|a_{i_j} -a'_j|< \varepsilon$ for each $j=1,\ldots,k$, we have 
\[ \models \varphi(a'_1,\dotsc,a'_k)\leftrightarrow
\varphi(a_{i_1},\dotsc,a_{i_k}). \]  
\end{defn}

\subsection{$\log_T$-transformations}
\label{sec:delta-log_t-transf}

\begin{defn} Let $\varphi(x_1,\dotsc,x_r)$ be an $\CL$-formula.   Let $T>0$ be a real
 number.  For a formula
 $\psi(y_1,\dotsc, y_s)$ we say that
 $\psi$ is a
 \emph{$\log_T$-transformation} of $\varphi$ if it is obtained from
 $\varphi$ by replacing
 \textbf{every} free variable $x_{i}$ in $\varphi$ by an expression of the form
 $\log_T(u_i-v_i)$ with $u_i,v_i\in \{ y_1,\dotsc,y_s\}$.

\end{defn}

\begin{defn}
We say that an $\CL$-formula $\varphi(x_1, \ldots, x_r)$ is an
\emph{$\rd$-formula} if it 
 depends only on the ratios of differences of its
variables, i.e. 
it is equivalent to a
  formula of the form $$\psi \left(\dfrac{x_{i_1}-x_{j_1}}{x_{p_1}-x_{q_1}},\dotsc, \dfrac{x_{i_s}-x_{j_s}}{x_{p_s}-x_{q_s}} \right) $$ for some $\psi(y_1, \ldots, y_s) \in \CL$, where  $i_t, j_t, p_t, q_t \in \{ 1, \ldots, r\}$ for all $t = 1, \ldots, s$ (and there are no other  free variables in $\psi$).
\end{defn}

\begin{claim}\label{claim:nice} Let $T>0$. 
A  $\log_T$-transformation of an $\rd$-formula  $\varphi(x_1, \ldots, x_r)$, is
   also an $\rd$-formula, and it is also a $\log_2$-transformation of
   $\varphi$.
\end{claim}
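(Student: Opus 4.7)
My plan is to carry out a short direct computation based on two elementary logarithm identities: $\log_T(a) - \log_T(b) = \log_T(a/b)$ and the change-of-base formula $\log_T(x) = \log_2(x)/\log_2(T)$. Since $\varphi$ is an $\rd$-formula, it is equivalent to $\psi(R_1, \ldots, R_s)$ for some $\CL$-formula $\psi(z_1, \ldots, z_s)$ and ratios $R_t = (x_{i_t} - x_{j_t})/(x_{p_t} - x_{q_t})$. Because logical equivalence is preserved under substitution of $\CL$-terms for free variables, the $\log_T$-transformation of $\varphi$ (replacing each $x_i$ by $\log_T(u_i - v_i)$) is equivalent to $\psi(R_1', \ldots, R_s')$, where each $R_t'$ is obtained from $R_t$ by the same substitution.

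Applying $\log_T(a) - \log_T(b) = \log_T(a/b)$ to the numerator and denominator of each $R_t'$, and then change of base, the factor $\log_2(T)$ appears in both numerator and denominator and cancels, yielding
$$R_t' \;=\; \frac{\log_T(\alpha_t)}{\log_T(\beta_t)} \;=\; \frac{\log_2(\alpha_t)}{\log_2(\beta_t)}, \qquad \alpha_t := \frac{u_{i_t} - v_{i_t}}{u_{j_t} - v_{j_t}}, \ \beta_t := \frac{u_{p_t} - v_{p_t}}{u_{q_t} - v_{q_t}},$$
where the $\alpha_t, \beta_t$ are ratios of differences of the new variables $y_1, \ldots, y_s$. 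Hence the transformed formula is $\psi'(\alpha_1, \beta_1, \ldots, \alpha_s, \beta_s)$ for the $\CL$-formula $\psi'(a_1, b_1, \ldots, a_s, b_s) := \psi(\log_2(a_1)/\log_2(b_1), \ldots, \log_2(a_s)/\log_2(b_s))$ of $2s$ variables. Since its arguments are ratios of differences of the $y$'s, this exhibits the transformed formula as an $\rd$-formula in $y$, giving the first conclusion.

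For the second conclusion, repeating the same calculation starting from the $\log_2$-transformation of $\varphi$ with the same choice of $(u_i, v_i)$ produces the identical expression (now with no $\log_2(T)$ factors to cancel in the first place), so the $\log_T$-transformation of $\varphi$ is logically equivalent to the $\log_2$-transformation of $\varphi$ with the same $(u_i, v_i)$. I do not foresee any substantive obstacle; the only point worth flagging is that the cancellation of $\log_2(T)$ requires a ratio of two logarithms, and this ratio structure is guaranteed precisely because $\varphi$ is an $\rd$-formula, so that after substitution each $\log_T(u_i - v_i)$ appears only inside one of the ratios $R_t$.
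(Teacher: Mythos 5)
Your proof is correct and follows essentially the same route as the paper's: both proofs rest on the two identities $\log_T(a) - \log_T(b) = \log_T(a/b)$ and the base-independence of a ratio of logarithms (which is just the change-of-base cancellation you spell out). The only difference is cosmetic — you explicitly name the outer formula $\psi'$ witnessing that the transformed formula is an $\rd$-formula, while the paper leaves that step implicit after establishing the key identity.
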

  \begin{proof}
In a  $\log_T$-transformation  of $\varphi$ an expression of the form 
 $\frac{x_i-x_j}{x_p-x_q}$  is replaced by an expression of the form 
\[ \frac{\log_T(u_i - v_i) -\log_T(u_j-v_j)}{\log_T(u_p -
  v_p) -\log_T(u_q-v_q)}, \]
which is equivalent to 
\[
\frac{\log_T\frac{u_i-v_i }{u_j-v_j}}
{\log_T\frac{u_p-v_p}{u_q-v_q}}.\]
Since the ratio of two logarithms does not depend on the base, it is
also equivalent to 
\[
\frac{\log\frac{u_i-v_i }{u_j-v_j}}
{\log\frac{u_p-v_p}{u_q-v_q}}.\]
 
Thus, a $\log_T$-transformation of an rd-formula $\varphi$ is again an rd-formula that is also a $\log_2$-transformation of $\varphi$. 
\end{proof}

\subsection{Proof of Theorem \ref{thm:mainexp}}
\label{sec:proof-theorem}

For a formula $\varphi(x_1,\dotsc,x_k) \in \CL$ with $|x_1|=\dotsb=|x_k|=d$
and an integer $n$ we will denote by $R_\varphi^+(n)$ the smallest
integer $N$ such that any \emph{increasing} sequence $a_1<\dotsb <a_N$
contains a $\varphi$-indiscernible subsequence of length $n$. 

Obviously for any formula $\varphi(x_1,\dotsc,x_k)$ with $|x_i|=1$ we
have  $R^+_\varphi(n)\leq R_\varphi(n)$. 

Thus Theorem \ref{thm:mainexp} follows from the following  refined
version.

\begin{thm}
  \label{thm:main1}
For every $k\geq 3$ there are an $\rd$-formula $E_k(x_1,\dotsc x_k) \in \CL$ with
$x_1, \ldots, x_k$ all singletons and a constant $C_k>1$  such that, for all real 
$0<c<1$ and for all  large enough 
$n \in \NN$, there is an increasing sequence of natural numbers ${\vec a}^{\,n}$ of
length at least $e_{k-2}(cn)$  such that $E_k$ is robust on ${\vec a}^{\,n}$, and ${\vec a}^{\,n}$ does not contain an $E_k$-indiscernible subsequence of
length $C_kn$. 
\end{thm}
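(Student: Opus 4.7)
I would prove Theorem \ref{thm:main1} by induction on $k \geq 3$, implementing the classical Erdős--Hajnal stepping-up lemma in $\Rexp$ using the $\log_T$-transformation machinery of Section \ref{sec:delta-log_t-transf}. This allows the relations $E_k$ to all live on the real line while each inductive step gains one tower level in the length of the bad sequence.

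For the base case $k = 3$, I would take $T > 0$ sufficiently large and set $a_j = T^j$ for $j = 1, \ldots, N$ with $N = \lceil 2^{cn} \rceil$. Define $E_3(x_1, x_2, x_3)$ via an rd-formula that extracts the discrete signature $(j_1, j_2, j_3)$ through $\log_T$-transformations of ratios of differences, and returns the value of a fixed $2$-coloring of triples of $[N]$ with no monochromatic subset of size $C_3 n$ (whose existence follows from the standard bound $R_3(n) \geq 2^{c'n}$). For $T$ large, the signatures are integer-valued and stable under perturbations, which yields robustness.

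For the inductive step $k \to k+1$, given the rd-formula $E_k$ and a sequence $a_1 < \cdots < a_N$ with $N = e_{k-2}(cn)$ on which $E_k$ is robust and avoids $E_k$-indiscernibles of length $C_k n$, take $T = T(k)$ sufficiently large and define
\[ b_S = \sum_{i \in S} T^{-i} \qquad (S \subseteq \{1, \ldots, N\}), \]
ordered by their real values, yielding a sequence of length $2^N = e_{k-1}(cn)$. For $S \neq S'$ the leading term of $|b_S - b_{S'}|$ is $T^{-\delta(S,S')}$, where $\delta(S,S') = \min(S \triangle S')$, so that $\lfloor -\log_T|b_S - b_{S'}| \rfloor = \delta(S,S')$, robustly under small perturbations. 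Define $E_{k+1}(y_1, \ldots, y_{k+1})$ as a Boolean combination of (i) the Erdős--Hajnal stepping-up local rules comparing consecutive values $\delta_s := \delta(S_s, S_{s+1})$, expressible as comparisons of $\log_T$-transformations of differences, and (ii) a $\log_T$-transformation of $E_k$ evaluated on the implicitly extracted values $a_{\delta_1}, \ldots, a_{\delta_k}$. By Claim \ref{claim:nice}, this is again an rd-formula, and it is definable in $\Rexp$.

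Robustness of $E_{k+1}$ on $(b_S)$ then follows from the inductive robustness of $E_k$ together with the stability of $\lfloor -\log_T|\cdot|\rfloor$ for $T$ large (calibrated to grow with $k$). For the upper bound, any $E_{k+1}$-indiscernible subsequence of $(b_S)$ forces, via the local rules (i), its associated $\delta$-sequence to be monotone; the restriction (ii) of $E_{k+1}$ then equates with the truth value of $E_k$ on the projection $(a_{\delta_s})$, which must be an $E_k$-indiscernible subsequence of $(a_j)$ and hence of length at most $C_k n$ by induction, giving $C_{k+1} = O(C_k)$. The main obstacle I expect is faithfully encoding the Erdős--Hajnal local stepping-up rules as rd-formulas via $\log_T$-transformations, and calibrating $T$ at each inductive step so that neither the $\lfloor\cdot\rfloor$ operations nor the ratio-comparisons used in $E_{k+1}$ can be flipped by the perturbations permitted by the robustness window inherited from $E_k$.
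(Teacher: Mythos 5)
Your overall strategy (stepping-up via $\log_T$-transformations to keep everything on $\RR$) is the paper's strategy, but two of your concrete steps do not go through as written.

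First, the base case. You propose to let $E_3$ ``return the value of a fixed $2$-coloring of triples of $[N]$ with no monochromatic subset of size $C_3 n$,'' citing the nonconstructive bound $R_3(n)\geq 2^{c'n}$. But the theorem requires a single rd-formula $E_3$ in $\CL$, fixed once and for all, that works for every large $n$; a coloring produced by the probabilistic argument is a combinatorial object depending on $N$, and there is no mechanism to ``return its value'' from a fixed $\Rexp$-formula. The paper instead uses the explicit formula $E_3 := x_1 + x_3 - 2x_2 \geq 0$ (equivalently $\frac{x_3-x_2}{x_2-x_1}\geq 1$), proves directly that $1,2,\dotsc,2^n$ has no $E_3$-indiscernible subsequence of length $n+2$ (Claim \ref{claim:k=3}), and then passes to a subsequence of length $2^{cn}$ with no $3$-term arithmetic progression via Behrend's theorem to obtain robustness. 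Without an explicit arithmetic formula at the bottom, the induction cannot start.

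Second, the inductive step. You encode $b_S = \sum_{i\in S} T^{-i}$, so that $\lfloor -\log_T|b_S - b_{S'}|\rfloor = \delta(S,S')$ recovers an \emph{index} in $\{1,\dotsc,N\}$. But then you need to evaluate $E_k$ ``on the implicitly extracted values $a_{\delta_1},\dotsc,a_{\delta_k}$,'' and there is no definable lookup from the index $\delta_s$ to the value $a_{\delta_s}$ (the previous sequence $\vec a$ was obtained by Behrend subsampling and is not a closed-form function of its index). The paper sidesteps this entirely by defining $B_T = \{\sum_i \beta_i T^{a_i}\}$, i.e.\ the exponents are the \emph{values} $a_i$ rather than positions; then $\log_T(b-c)$ approximates $a_{\Delta(b,c)}$ directly, and robustness of $E_k$ on $\vec a$ absorbs the approximation error. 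You should replace $T^{-i}$ by $T^{a_i}$ (or $T^{-a_i}$) so that the $\log_T$-transformation produces real numbers $\varepsilon$-close to actual entries of $\vec a$, at which point Claim \ref{claim:nice} and the robustness hypothesis let you substitute the approximate values into $E_k$. As written, your step-up formula is not a formula of $\CL$ applied to $b_1,\dotsc,b_{k+1}$ alone.

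With those two repairs your outline matches the paper's argument (which is itself a definable implementation of \cite[Lemma 3.1]{conlon2014ramsey}). The remaining ingredients you mention --- monotone $\delta$-sequences force $E_k$-indiscernibility, a local max and a local min force non-indiscernibility, $T$-independence of the rd-formula via Claim \ref{claim:nice} --- are correctly identified.
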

\begin{proof}[Proof of Theorem \ref{thm:main1} $\Rightarrow$ Theorem \ref{thm:mainexp}]
Fix $0<c<1$ and set $c_{k} = \frac{c}{C_{k}}$, for each $k \geq 3$, where $C_{k}$ is the constant given by \ref{thm:main1}. We then have, for the rd-formula $E_k(x_1,\dotsc x_k)$ given to us by \ref{thm:main1} and for all  large enough 
$n$,  an increasing sequence of natural numbers ${\vec a}^{\,n}$ of
length at least $e_{k-2}(c_{k}n)$ such that ${\vec a}^{\,n}$ does not contain an $E_k$-indiscernible subsequence of
length $n$. Thus $R^+_\varphi(n) \geq e_{k-2}(c_{k}n)$, and hence $R_\varphi(n)  \geq e_{k-2}(c_{k}n)$ by the preceding remark.
\end{proof}

\begin{rem} To prove Theorem \ref{thm:main1} it is enough to construct
  formulas $E_k(x_1,\dotsc,x_k)$ whose truth values  are well defined only on increasing sequences 
    of real numbers $r_1<\dotsb<r_k$.  (The formula
  $\log(x_2-x_1)>\log(x_3-x_2)$ is an example of a formula that we
  will use often.)
\end{rem}

We proceed by induction on $k$.

\subsection{The base case $k=3$}
\label{sec:base-case-k=3}

For the following claim see \cite[Section 3.1]{conlon2014ramsey}.
\begin{claim}\label{claim:k=3}
Let $E_3(x_1,x_2,x_3)$  be the formula  $x_1+x_3-2x_2 \geq 0$.  Then
for any $n\geq 1$ the sequence $1,2,3,\dotsc,2^n$ does not contain an
$E_3$-indiscernible subsequence of length $n+2$. 
\end{claim}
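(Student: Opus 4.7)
The plan is to analyse an arbitrary $E_3$-indiscernible subsequence $b_1<b_2<\dotsb <b_m$ of $1,2,\dotsc,2^n$ and show directly that $m\leq n+1$. Unwinding the definition, $E_3(x_1,x_2,x_3)$ asserts $x_3-x_2\geq x_2-x_1$, so $E_3$-indiscernibility splits into two exhaustive cases: either $E_3$ holds on every increasing triple from the subsequence (``convex'' case) or $E_3$ fails on every such triple (``concave'' case, with strict inequality $x_3-x_2<x_2-x_1$). In both cases I set $d_i:=b_{i+1}-b_i\in\ZZ_{>0}$ and show that the $d_i$ must grow (resp.\ decay) at least geometrically, which will fight against $b_m-b_1<2^n$ and pin down $m$.

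For the convex case, I would apply $E_3$-indiscernibility to the triple $(b_1,b_k,b_{k+1})$ to get
\[
d_k = b_{k+1}-b_k \;\geq\; b_k-b_1 \;=\; \sum_{j<k} d_j.
\]
Since $d_1\geq 1$, a one-line induction then gives $d_k\geq 2^{k-2}$ for all $k\geq 2$, whence
\[
b_m-b_1 \;\geq\; 1+\sum_{k=2}^{m-1} 2^{k-2} \;=\; 2^{m-2}.
\]
Combining with $b_m-b_1\leq 2^n-1<2^n$ forces $m\leq n+1$.

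For the concave case, I would apply the strict failure of $E_3$ to the triple $(b_{k-1},b_k,b_m)$, yielding
\[
d_{k-1} \;>\; b_m-b_k \;=\; d_k+d_{k+1}+\dotsb+d_{m-1}.
\]
Starting from $d_{m-1}\geq 1$ and working backwards, this forces $d_{m-k}\geq 2^{k-1}$, hence $b_m-b_1\geq 2^{m-1}-1$, which again gives $m\leq n+1$.

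Both cases therefore yield $m\leq n+1$, which is strictly stronger than the stated conclusion that no $E_3$-indiscernible subsequence has length $n+2$. There is no serious obstacle here; the whole argument is an elementary doubling estimate. The only thing to be careful about is correctly identifying the ``concave'' case as giving strict inequalities so that the integer-valued $d_i$'s provide the base step of the backwards induction.
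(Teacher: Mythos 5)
Your argument is correct, and it reproduces the standard gap-doubling argument from \cite[Section~3.1]{conlon2014ramsey}, to which the paper simply defers rather than giving a proof of its own. Both cases check out: in the convex case, applying $E_3$ to $(b_1,b_k,b_{k+1})$ forces $d_k\geq\sum_{j<k}d_j$, so the partial sums $S_k=b_{k+1}-b_1$ satisfy $S_k\geq 2S_{k-1}$, giving $b_m-b_1\geq 2^{m-2}$; in the concave case, applying the strict failure of $E_3$ to $(b_{k-1},b_k,b_m)$ and using integrality forces $b_m-b_1\geq 2^{m-1}-1$; in either case $b_m-b_1\leq 2^n-1$ yields $m\leq n+1$. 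One small remark: your bound $m\leq n+1$ is not in fact strictly stronger than the stated conclusion, but equivalent to it --- if there were an $E_3$-indiscernible subsequence of length $>n+2$, any initial segment of it of length exactly $n+2$ would also be $E_3$-indiscernible, so ``no subsequence of length $n+2$'' already entails ``every $E_3$-indiscernible subsequence has length $\leq n+1$.''
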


It is not hard to see that $E_3$ is equivalent to an rd-formula.
Indeed we can rewrite $E_3$ as $x_3-x_2 \geq x_2-x_1$, which on increasing
sequences is equivalent to $\frac{x_3-x_2}{x_2-x_1} \geq 1$. 

We also need $E_3$ to be robust on ${\vec a}^{\,n}$. 
It is not hard to see that $E_3$ is not robust on the sequence
$1,2,\dotsc,2^n,$ since $1+3-2\cdot2=0$ and the truth of $E_3$ can change even if
we perturb the first 3 elements of the sequence by arbitrarily small positive amounts.  It is however also
easy to see that $E_3$ is robust on any sequence that does not
contain any terms
$a<b<c$ with $a+c-2b=0$, i.e. it is robust on any sequence that does
not contain a non-trivial  3-term arithmetic progression.  To get
such a sequence we use 
Behrend's Theorem (see \cite{Behrend}).

\begin{thm}[Behrend's Theorem]
  \label{thm:behrend} There is a constant $D>0$ such that 
for all natural numbers $m$ there exists a set $X\subseteq \{1,\dotsc,m\}$ with
$|X|\geq \dfrac{m}{2^{D\sqrt{\log m}}}$ not containing any 
non-trivial $3$-term arithmetic progressions. 
\end{thm}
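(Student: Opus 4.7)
The plan is to use Behrend's classical construction, which extracts a $3$-AP-free subset of $\{1,\dotsc,m\}$ from a sphere in a high-dimensional integer grid. The key geometric input is strict convexity of Euclidean spheres: if $x,y,z\in\RR^k$ all lie on a common sphere and $x+z=2y$, then $x=y=z$.

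First I would fix parameters $d,k\in\NN$ (to be chosen at the end) and consider the set $G=\{0,1,\dotsc,d-1\}^k\subset\ZZ^k$. The squared Euclidean norm $\|v\|_2^2$ of $v\in G$ is an integer in $[0,k(d-1)^2]$, so by pigeonhole there exists some $\ell$ such that
\[ S_\ell:=\{v\in G:\|v\|_2^2=\ell\}\quad\text{satisfies}\quad |S_\ell|\geq \frac{d^k}{k(d-1)^2+1}. \]
By the strict convexity observation above, $S_\ell$ contains no non-trivial $3$-term arithmetic progression as a subset of $\ZZ^k$.

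Next I would transfer $S_\ell$ to an AP-free subset of $\{1,\dotsc,m\}$ by encoding vectors in base $2d-1$. Define $\phi:G\to\NN$ by $\phi(a_0,\dotsc,a_{k-1})=\sum_{i=0}^{k-1}a_i(2d-1)^i$. The map $\phi$ is injective with image contained in $\{0,1,\dotsc,(2d-1)^k-1\}$. Because every coordinate of $a+c$ for $a,c\in G$ lies in $\{0,\dotsc,2d-2\}$, there are no base-$(2d-1)$ carries, so $\phi(a)+\phi(c)=2\phi(b)$ forces $a+c=2b$ coordinatewise. Hence $X:=\phi(S_\ell)+1\subseteq\{1,\dotsc,m\}$ with $m=(2d-1)^k$ contains no non-trivial $3$-term arithmetic progression, and $|X|=|S_\ell|$.

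Finally I would optimize $d,k$ as functions of $m$. Writing $|X|\geq m\cdot\bigl(\tfrac{d}{2d-1}\bigr)^k\cdot\tfrac{1}{k(d-1)^2+1}\geq m\cdot 2^{-k}\cdot\tfrac{1}{kd^2}$ and choosing $k\approx\sqrt{\log m}$ together with $d\approx 2^{\sqrt{\log m}}$ so that $(2d-1)^k\approx m$, the dominant loss becomes $2^{-k}\approx 2^{-\sqrt{\log m}}$, while the polynomial factor $kd^2$ is absorbed into a constant in the exponent. This yields $|X|\geq m/2^{D\sqrt{\log m}}$ for a suitable absolute constant $D>0$. The main (minor) obstacle is simply balancing the two competing terms $(2d-1)^k/d^k=(2-1/d)^k$ and $kd^2$; everything else is an application of pigeonhole and the strict convexity of the sphere.
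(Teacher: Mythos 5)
The paper does not supply a proof of this statement; it is quoted as a known result with a citation to Behrend's original paper. Your argument is precisely Behrend's classical sphere-on-a-grid construction from that source (pigeonhole on the level sets of $\|\cdot\|_2^2$ on $\{0,\dotsc,d-1\}^k$, strict convexity to rule out $3$-APs on a sphere, base-$(2d-1)$ encoding to avoid carries, and the $k\approx\sqrt{\log m}$ optimization), and it is correct; the only cosmetic gap is that for a general $m$ not of the form $(2d-1)^k$ one should take the largest such power below $m$ and absorb the resulting factor of at most $2d-1=2^{O(\sqrt{\log m})}$ into the constant $D$.
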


For any $0<c<1$ and for all $n$ large enough, $2^{n-D\sqrt{\log{2^n} }} > 2^{cn}$. 
Therefore, for all large enough $n$, the sequence 
$ 1,2,\dotsc, 2^n$ contains a subsequence of length  $2^{cn}$ that does not
contain a non-trivial $3$-term arithmetic progression.  

\medskip
This finishes the case $k=3$, and we can take  $C_{3} := 2^\eta$, for any $\eta>0$    (as then  $C_{3}n
\geq n+2$ for all large enough  $n$).
\subsection{Inductive Step}
\label{sec:inductive-step}

Assume we have an $\rd$-formula $E_k(x_1,\dotsc,x_k)$ as in Theorem
\ref{thm:main1}. To complete the inductive step it is enough to construct an $\rd$-formula $E_{k+1}(x_1,\dotsc,x_{k+1} )$ satisfying the following for any $N \in \mathbb{N}$:\\

Let $\vec a$ be  an increasing  sequence of natural numbers of length
$N$  such that $E_k$ is robust on $\vec a$, and  
$\vec a$ does not contain an $E_k$-indiscernible subsequence of length
$n$.  Then there is an increasing  sequence of natural numbers  $\vec b$ of length  $2^N$
such that  $E_{k+1}$ is robust on $\vec b$ and  
$\vec b$ does not contain an $E_{k+1}$-indiscernible subsequence of length
$2n+k-4$. 

(We are then done taking $C_{k} := 2^{{k-3}+\eta}$ for all $k \geq 3$, where $\eta >0 $ was fixed in the base case.)

\medskip

Let  $\vec a=(a_1,\dotsc,a_{N})$ be an
increasing sequence of natural numbers such that $E_k$ is robust on
$\vec a$, and $\vec a$ does not contain an $E_k$-indiscernible
sequence of length $n$. 

Let $T$ be a very large integer,
specified later (in terms of $\vec{a}$).  

Consider the set 
\[  B_T=\left\{ \sum_{i=1}^N \beta_i T^{a_i} \colon \beta_i\in
  \{0,1\}\right\}. \]
Since $T$ is large enough, any $b\in B_T$ can be written uniquely as 
$b=\sum_{i=1}^N b(i) T^{a_i}$ with $b(i)\in \{0,1\}$. 
 Obviously $B_T$ has size $2^N$ and we construct the sequence $\vec b_T$ by
taking the increasing enumeration of $B_T$. 

For $b,c \in B_T$ with $b \neq c$, let $\Delta(b,c) :=\max\{ i \colon b(i)\neq c(i)\}$.
It is easy to see that, when $T$ is large enough, for $b, c \in B_T$ with $b \neq c$ 
and $i : =\Delta(b,c)$
we have $b< c
\Leftrightarrow b(i) < c(i)$. It follows then that
\begin{equation}
  \label{eq:1}
  b<c<d\in B_T \Rightarrow \Delta(b,c) \neq \Delta(c,d).
\end{equation}

Finally for $b\neq c \in B_T$ let $\delta(b,c) : =a_{\Delta(b,c)}$.

We will now construct the step-up relation
$E^\uparrow_{k}(x_1,\dotsc,x_{k+1})$ (not definable in $\Rexp$) on
increasing $(k+1)$-tuples of elements of $B_T$ (we don't care how it is defined on the other
elements).

Let $b_1< b_2<\dotsc < b_{k+1}$ be elements of $B_T$ and for
$i=1,\dotsc,k$ let $\delta_i :=\delta(b_{i+1},b_i)$.  Notice that
$\delta_i$ is an element of $\vec a$. 

We define $E^\uparrow_{k}(b_1,\dotsc,b_{k+1})$ to be true if and only if 
\[
\begin{array}{ccc} 
   E_k(\delta_1,\dotsc,\delta_k) & \text{ and } & 
                                                \delta_1<\delta_2<\dotsc <
                                                \delta_k, \\
& \text{ or } \\
 E_k(\delta_k,\dotsc,\delta_1) & \text{ and } &  \delta_1> \delta_2 >\dotsc >
                                                \delta_k, \\
& \text{ or } \\ 
\delta_1 < \delta_2     &\text{ and } &\delta_2> \delta_3.
\end{array}
\]

\begin{claim}
  \label{cliam:step-up-arg}
The sequence $\vec b_T$ does
  not contain an $E^\uparrow_{k}$-indiscernible subsequence of length $2n+k-4$.  
\end{claim}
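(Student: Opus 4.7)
The plan is to assume for contradiction that $c_1 < \cdots < c_m$ is an $E^\uparrow_k$-indiscernible subsequence of $\vec b_T$ of length $m = 2n+k-4$, set $e_i := \delta(c_{i+1},c_i) \in \vec a$ for $i=1,\dots,m-1$, and extract from the $e$-sequence an $E_k$-indiscernible subsequence of $\vec a$ of length at least $n$. The key arithmetic is the identity $\delta(b,d) = \max(\delta(b,c),\delta(c,d))$ for $b<c<d$ in $B_T$, which is immediate from the definition of $\Delta$ together with (\ref{eq:1}). Consequently, for any indices $i_1<\dots<i_{k+1}$ from $\{1,\dots,m\}$, the tuple $(\delta_1,\dots,\delta_k)$ used to evaluate $E^\uparrow_k(c_{i_1},\dots,c_{i_{k+1}})$ is given by $\delta_s = \max(e_{i_s},\dots,e_{i_{s+1}-1})$; in particular, for consecutive $(k+1)$-tuples this is just $(e_i, e_{i+1}, \dots, e_{i+k-1})$.

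First I would determine the shape of $e$ using indiscernibility on consecutive $(k+1)$-tuples. A peak $e_i<e_{i+1}>e_{i+2}$ activates the third disjunct of $E^\uparrow_k$ at $(c_i,\dots,c_{i+k})$ and forces the value TRUE; a valley $e_i>e_{i+1}<e_{i+2}$ makes all three disjuncts fail there and forces FALSE. Since $E^\uparrow_k$ has a fixed truth value on all $(k+1)$-tuples, the sequence $e$ cannot contain both a peak and a valley; combined with (\ref{eq:1}), which ensures consecutive $e_i$'s are distinct, this forces $e$ to be $\Lambda$-shaped (no valley, TRUE case) or V-shaped (no peak, FALSE case), with strictly monotone pieces on each side of the extremum.

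Next I would pigeonhole: the two monotone pieces of $e$ have lengths summing to $m = 2n+k-4$ (sharing only the extremum), so one has length $\ell \geq \lceil m/2\rceil \geq n$ for all $k\geq 3$. Now restrict attention to $(k+1)$-tuples $c_{i_1}<\dots<c_{i_{k+1}}$ whose indices all lie within this monotone piece; the max formula collapses to $\delta_s = e_{i_{s+1}-1}$ in an increasing piece and $\delta_s = e_{i_s}$ in a decreasing piece, so the $\delta$-vector is itself strictly monotone. The third disjunct of $E^\uparrow_k$ is therefore inactive, and its truth value equals the truth value of $E_k$ applied to the corresponding $k$-subtuple of $\vec a$ read in natural increasing order (through the first disjunct when the piece is increasing, the second when decreasing). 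Combined with the constant truth value of $E^\uparrow_k$, this shows that every $k$-subtuple of the chosen monotone piece of $e$ gets the same $E_k$-value, producing an $E_k$-indiscernible subsequence of $\vec a$ of length $\ell\geq n$ and the desired contradiction.

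The only delicate step is the bookkeeping that, given any chosen $k$-subtuple $e_{j_1}<\dots<e_{j_k}$ inside the monotone piece, recovers an honest $(k+1)$-tuple of $c$-indices realizing it as the $\delta$-vector; in the increasing case one takes $i_{s+1} = j_s + 1$ for $s=1,\dots,k$ and $i_1 = j_1$, with the symmetric choice in the decreasing case. Everything else should be a routine consequence of the three-disjunct definition of $E^\uparrow_k$ together with the max-structure of $\delta$ on $B_T$.
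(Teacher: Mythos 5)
Your approach is essentially the one the paper uses: detect peaks and valleys in the sequence $e_i = \delta(c_{i+1},c_i)$ via the third disjunct of $E^\uparrow_k$, and turn the resulting near-monotone structure into a long strictly monotone piece which, by the $\max$-identity for $\delta$, yields an $E_k$-indiscernible subsequence of $\vec a$. The paper organizes this as a case split (either some length-$n$ window of the $\delta'$-sequence is monotone, or one finds both a peak and a valley), while you derive the ``at most one direction change'' shape directly from indiscernibility and then pigeonhole; these are contrapositive reformulations of the same argument. Your explicit statement of the identity $\delta(b,d)=\max(\delta(b,c),\delta(c,d))$ and of the bookkeeping $i_{s+1}=j_s+1$, $i_1=j_1$ makes the ``monotone piece is $E_k$-indiscernible'' step cleaner than the paper's one-line appeal to the definition.

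There is one small gap worth flagging. You assert that the \emph{entire} sequence $e_1,\dots,e_{m-1}$ is $\Lambda$- or V-shaped, because a peak or valley at $e_{i+1}$ would be witnessed by the consecutive tuple $(c_i,\dots,c_{i+k})$. But that witness tuple needs $i+k\leq m$, i.e.\ $i\leq m-k$, so peaks and valleys at positions $m-k+2,\dots,m-2$ (there are $k-3$ of them, so this is an actual issue as soon as $k\geq 4$) are \emph{not} visible to any $(k+1)$-tuple of the $c$'s, and the full $e$-sequence need not be unimodal. This is why the paper restricts attention to $\delta'_1,\dots,\delta'_{2n-2}$: those are exactly the positions whose local extrema can be witnessed. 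The fix is purely a matter of bookkeeping and leaves your proof intact: restrict the peak/valley argument to $e_1,\dots,e_{m-k+2}$, which has length $2n-2$, so its two monotone pieces have lengths summing to $2n-1$ and one of them still has length at least $\lceil(2n-1)/2\rceil = n$. Note that the final step (turning a length-$\ell$ strictly monotone piece $e_{j_0},\dots,e_{j_1}$ into an $E_k$-indiscernible subsequence of $\vec a$) needs no such restriction, since the witness tuple $(c_{i_1},\dots,c_{i_{k+1}})$ you build has $i_{k+1}\leq j_1+1\leq m$ automatically; the position constraint is only relevant to the peak/valley detection.
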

\begin{proof} We repeat the Erd\H{o}s-Hajnal argument (see \cite[Lemma 3.1]{conlon2014ramsey}).

Assume, towards getting a contradiction, that   $\vec b_T$  contains
an $E^\uparrow_{k}$-indiscernible subsequence $c_1<c_2<\dotsb <c_{2n+k-4}$.
Let $\delta'_i :=\delta(c_{i+1},c_i)$.

Assume  first that there exists  $j$ such that $\delta'_j, \delta'_{j+1},\dotsc,\delta'_{j+n-1}$ is  a monotone sequence.
Then by \eqref{eq:1} this sequence must be strictly monotone.  From the definition of $E^\uparrow_{k}$ it follows then that the sequence 
$\delta'_j, \delta'_{j+1},\dotsc,\delta'_{j+n-1}$ is $E_k$-indiscernible --- a contradiction.

Thus neither of the sequences   $\delta'_1,\dotsc, \delta'_{n}$
or $\delta'_{n-1},\dotsc, \delta'_{2n-2}$ is monotone. Hence each of them  contains either a local maximum, i.e. $\delta'_{j-1} < \delta'_j > \delta'_{j+1}$,  or a local minimum, i.e. $\delta'_{j-1} > \delta'_j < \delta'_{j+1}$.
Since between two local minima there is a local maximum and vice versa, the sequence  $\delta'_1,\dotsc, \delta'_{2n-2}$ contains both a local maximum and a local minimum. But then, by the definition of $E^\uparrow_{k}$, the sequence 
$c_1<c_2<\dotsb <c_{2n+k-4}$ cannot be $E^\uparrow_{k}$-indiscernible.
A contradiction.

\end{proof}

\subsubsection{Definability}
\label{sec:definability}

Now, as in \cite{conlon2014ramsey}, for $b>c$ we define 
\[\bar \delta_T(b,c)=\log_T(b-c). \]   

It is not hard to see that for any fixed $\varepsilon>0$, if $T$ is
large enough, then for all $b>c\in B_T$ we have 
  $|\delta(b,c)-\log_T(b-c)|< \varepsilon$.   

Since $E_k$ is robust on $\vec a$, choosing a very large integer $T$
and considering the relation $E^{\uparrow T}_{k}(x_1,\dotsc,x_{k+1})$ obtained from
$E^\uparrow_{k}$ by replacing  $\delta_i$ by $\bar\delta_T(b_{i+1},b_i)$ for all $i$, we
obtain that for $b_1,\dotsc,b_{k+1}\in B_T$ with $b_1<\dotsc<b_{k+1}$ we have
$E^\uparrow_{k}(b_1,\dotsc,b_{k+1})$ if and only if
$E^{\uparrow T}_{k}(b_1,\dotsc,b_{k+1})$.
Hence $\vec b_T$ does not contain an $E^{\uparrow T}_{k}$-indiscernible subsequence
of length $2n+k-4$. 

Notice that $E_{k}^{\uparrow T}$ is definable in  $\Rexp$ and 
for $b_1<b_2<\dotsc<
b_{k+1}$ we have 
that $E^{\uparrow T}_{k}(b_1,\dotsc,b_{k+1})$ holds  if and only if 
\[
\begin{array}{ccc} 
   E_k(\bar\delta_T(b_2,b_1),\dotsc,\bar\delta_T(b_{k+1},b_k)) & \text{ and } & 
                                              \bigwedge_{i=1}^{k-1}
                                                                                \bar\delta_T(b_{i+1},b_i)<\bar\delta_T(b_{i+2},b_{i+1})  \\
                                                                               
& \text{ or } \\
    E_k(\bar\delta_T(b_{k+1},b_k),\dotsc,\bar\delta_T(b_{2},b_1)) &
                                                                    \text{ and } &  \bigwedge_{i=1}^{k-1}
                                                                                \bar\delta_T(b_{i+1},b_i)>\bar\delta_T(b_{i+2},b_{i+1}) \\

 & \text{ or } \\
\bar\delta_T(b_2,b_1) < \bar\delta_T(b_3,b_2)  & \text{ and } & \bar\delta_T(b_3,b_2)  > \bar\delta_T(b_4,b_3). 
\end{array}
\]

\begin{claim}
  \label{claim:rdE}
$E^{\uparrow T}_k$ is equivalent to an $\rd$-formula and does not
depend on $T$.
\end{claim}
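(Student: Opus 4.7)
The plan is to decompose the explicit disjunctive description of $E^{\uparrow T}_k(b_1,\dotsc,b_{k+1})$ displayed just before the claim into its component subformulas, show each component is an $\rd$-formula independent of $T$, and then assemble the result using the fact that Boolean combinations preserve both properties.

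First I would isolate the two types of atomic pieces appearing in the displayed formula. The first type is $E_k\bigl(\bar\delta_T(b_2,b_1),\dotsc,\bar\delta_T(b_{k+1},b_k)\bigr)$ and its reverse-ordered analog. By construction this is exactly the formula obtained from $E_k(x_1,\dotsc,x_k)$ by replacing each $x_i$ with an expression of the form $\log_T(u_i-v_i)$ where $u_i,v_i\in\{b_1,\dotsc,b_{k+1}\}$; so it is a $\log_T$-transformation of $E_k$. The inductive hypothesis of Theorem \ref{thm:main1} guarantees that $E_k$ is itself an $\rd$-formula, so Claim \ref{claim:nice} applies and tells us that the transformation is again an $\rd$-formula equivalent to the corresponding $\log_2$-transformation; in particular it does not depend on $T$.

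The second type is an inequality of the form $\bar\delta_T(b_{i+1},b_i)<\bar\delta_T(b_{i+2},b_{i+1})$ (and the symmetric reversed inequality). Here I would bypass Claim \ref{claim:nice} and argue directly: since $b_{i+1}-b_i$ and $b_{i+2}-b_{i+1}$ are positive and $\log_T$ is strictly increasing for $T>1$, this inequality is equivalent to
\[
\frac{b_{i+2}-b_{i+1}}{b_{i+1}-b_i}>1,
\]
which is manifestly an $\rd$-formula and visibly independent of $T$.

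Finally I would combine the pieces: the full formula $E^{\uparrow T}_k$ is a Boolean combination of the atomic components above, and the class of $\rd$-formulas is closed under Boolean combinations (since the outer $\psi$ in the definition of $\rd$-formula can absorb any Boolean structure), as is the class of formulas whose truth value is independent of $T$. I don't anticipate a genuine obstacle here; the only subtlety to flag is that Claim \ref{claim:nice} is only directly applicable to the $E_k$-subformulas, so the short direct argument handling the inequalities between $\bar\delta_T$-values should be stated explicitly rather than being folded into an appeal to Claim \ref{claim:nice}.
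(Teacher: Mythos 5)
Your proposal is correct and follows essentially the same route as the paper: both decompose $E^{\uparrow T}_k$ into the $\log_T$-transformations of $E_k$ (handled by Claim~\ref{claim:nice} together with the inductive hypothesis that $E_k$ is an $\rd$-formula) and the comparison inequalities $\log_T(y-x)>\log_T(u-v)$ (handled directly by rewriting as $\frac{y-x}{u-v}>1$), then close under Boolean combinations. No gaps.
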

\begin{proof}
 By definition, $E_{k}^{\uparrow T}$ is a Boolean combination of
 $\log_T$-transformations of $E_k$  and  formulas of the form $\log_T(y-x)>\log_T(u-v)$. 

By Claim \ref{claim:nice}, a $\log_T$-transformation of an $\rd$-formula
is an $\rd$-formula that does not depend on $T$, and so we only need to check
that  $\log_T(y-x)>\log_T(u-v)$ is equivalent to an $\rd$-formula that
does not depend on $T$. Indeed, 
 $\log_T(y-x)-\log_T(u-v)>0$ is equivalent to
 $\frac{y-x}{u-v}>1$, which is an $\rd$-formula. 
\end{proof}

Using Claim \ref{claim:rdE}, we define $E_{k+1}$ to be $E^{\uparrow
  2}_k$. We can write a more explicit definition of $E_{k+1}$. It is the
disjunction of three formulas $\varphi_1\vee \varphi_2\vee \varphi_3$, where
\[ \varphi_1  \text{ is } 
   E_k \Bigl(\log(x_2-x_1),\dotsc,\log(x_{k+1}-x_k)\Bigr) \wedge
   \bigwedge_{i=1}^{k-1} \Biggl(\frac{x_{i+1}-x_i}{x_{i+2}-x_{i+1}}<1
   \Biggr), \]
\[ \varphi_2  \text{ is } 
   E_k \Bigl(\log(x_{k+1}-x_k),\dotsc,\log(x_{2}-x_1)\Bigr) \wedge
   \bigwedge_{i=1}^{k-1} \Biggl(\frac{x_{i+1}-x_i}{x_{i+2}-x_{i+1}}>1
   \Biggr), \]
and 
\[ \varphi_3 \text{ is } \frac{x_2-x_1}{x_3-x_2}<1 \,\wedge\,
\frac{x_3-x_2}{x_4-x_3}>1. \]

\medskip
It remains to show that for large enough $T$ the relation $E_{k+1}$ 
is robust on $\vec b_T$.

\subsubsection{Robustness}
\label{sec:robustness}

It is not hard to see that since $E_k$ is robust on $\vec a$ and
$\log_T$ is continuous, both    $E_k(\log_T(x_2-x_1),\dotsc,\log_T(x_{k+1}-x_k))$
and $E_k(\log_T(x_{k+1}-x_k),\dotsc,\log_T(x_2-x_1))$ are robust on
$\vec b_T$, and we only need to check that all  of the formulas $x_{i+1}-x_{i} <
x_{i+2}-x_{i+1}$ and $x_{i+1}-x_{i} >x_{i+2}-x_{i+1}$ are robust on $\vec b_T$, i.e. for $b<c<d$ in $B_T$ we
don't have $c-b=d-c$.
It is easy to check that there are no such $b,c,d$ in $B_T$.

\section{Bukh-Matousek in expansions of the $p$-adics}
\label{sec:case-p-adics}

In this section we give an analog of Theorem 
  \ref{thm:main} for relations definable in the fields of the $p$-adic numbers $\mathbb{Q}_p$ for $p$ prime and many of their expansions. We begin by recalling the relevant definitions and facts.

 Let $\CL_p$ be the Macintyre language for the p-adics \cite{Mac}, i.e. $\CL_p$
consists of 
\begin{enumerate}[(a)]
\item the language of rings: $(0,1,+,-,\cdot, \ ^{-1})$;
\item a unary predicate $V$;
\item a unary predicate $P_n$ for each $n\in \NN$;
\end{enumerate}
with the usual interpretations in $\QQ_p$:  $V(\QQ_p)=\ZZ_p$ and
$P_n(\QQ_p) =\{ x\in \QQ_p\colon  \exists y \,x=y^n\}$. 
We will denote by $T_p$ the complete theory $\Th(\QQ_p)$.  Given $a \in \mathbb{Q}_p$, we will write $v(a)$ to denote the $p$-adic valuation of $a$; note that the relation $v(x) < (y)$ is definable in $\CL_p$.

By a result
of Macintyre (see \cite{Mac}), the theory $T_p$ eliminates quantifiers in the language $\CL_p$.
Similarly to the $o$-minimal case, there is a notion of minimality for expansions of $p$-valued fields. Recall that a \emph{$p$-valued field} $\mathcal{K}$ is a valued field of characteristic $0$ with the residue field of characteristic $p$, and such that $\mathcal{O}/p\mathcal{O}$ has finite dimension as a vector space over $\mathbb{F}_p$, where $\mathcal{O}$ is the valuation ring of $\CK$.

\begin{defn} \label{def: p-minimal}\cite{haskell1997version}
	Let $\CK$ be a $p$-valued field, viewed as a structure in the language $\CL_p$. An expansion $\CM$ of $\CK$  in a language $\CL \supseteq \CL_p$ is 
\emph{$P$-minimal} if, in every
model of $\Th(\CM)$, every definable subset in one variable is quantifier-free definable just using
the language $\CL_p$. 
\end{defn}
\begin{sample}\label{ex: P-minimal structs}
Important examples of $P$-minimal structures are given by:
\begin{enumerate}
\item for each prime $p$, the field $\mathbb{Q}_p$ (by Macintyre's theorem);
\item any finite extension of $\mathbb{Q}_p$ \cite{prestel1984formally};
\item given a finite extension of $\mathbb{Q}_p$, the expansion obtained by adding a new function symbol for every restricted analytic function \cite{van1999one}.
\end{enumerate}
\end{sample}

\begin{fact}\cite{haskell1997version}\label{fac: P-minimal fields}
Every $P$-minimal field $\CK$ is \emph{$p$-adically closed}, i.e. it is henselian and its value group is elementarily equivalent to $\mathbb{Z}$ as an ordered group.

In particular, if the value group is $\mathbb{Z}$, then $\CK$ is a finite extension of $\mathbb{Q}_p$, hence the residue field is finite.
\end{fact}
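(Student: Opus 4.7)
The plan is to prove the two halves of ``$p$-adically closed'' --- henselianity and that the value group $\Gamma$ is a $\mathbb{Z}$-group --- from $P$-minimality, and then deduce the particular case. A crucial point is that the $P$-minimality assumption is required in every model of $\Th(\CM)$ (Definition~\ref{def: p-minimal}), so one may freely pass to a saturated elementary extension $\CK^* \succ \CK$ and still invoke it there.

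For henselianity, I would fix $f(x) \in \CO[x]$ with $f(0) \in \m$ and $f'(0) \in \CO^{\times}$; the goal is to find a root of $f$ in $\m$. Standard Newton iteration, performed purely algebraically inside $\CO$, produces for every $n \in \mathbb{N}$ an element $x_n \in \CO$ with $v(f(x_n)) \geq n$. By $P$-minimality in $\CK^*$, the zero set $\{x \in \CO^* : f(x) = 0\}$ is quantifier-free $\CL_p$-definable, hence a Boolean combination of atomic sets defined by polynomial equalities together with the $V$- and $P_n$-predicates applied to polynomials. A saturation/compactness argument using the accumulating approximations $x_n$ together with a case analysis of this qf $\CL_p$-description forces the zero set to be non-empty in $\CK^*$, after which ``$\exists y \in \CO\ f(y) = 0$'' transfers back to $\CK$. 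The residue of the resulting root lifts the simple zero of $\bar f$, as required.

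For the value group $\Gamma = K^{\times}/\CO^{\times}$, which is interpretable in $\CK$, definable subsets arise from definable subsets of $K$; quantifier-free $\CL_p$-definable subsets of $K$ push down to Boolean combinations of cosets of $n\Gamma$ and initial/final segments of $\Gamma$. This is precisely the Presburger structure, so $\Gamma \equiv (\mathbb{Z},+,<)$ is a $\mathbb{Z}$-group, completing $p$-adic closedness. For the ``in particular'' clause, when $\Gamma = \mathbb{Z}$ the $p$-valued hypothesis that $\CO/p\CO$ is finite-dimensional over $\mathbb{F}_p$ combined with $v(p) \in \mathbb{Z}$ yields $[\CO/\m : \mathbb{F}_p] \cdot v(p) < \infty$, so the residue field is finite; the classical structure theorem for henselian discretely valued fields of mixed characteristic $(0,p)$ with finite residue field then identifies $\CK$ as a finite extension of $\mathbb{Q}_p$ of degree equal to ramification times residue degree.

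The hard part will be the henselianity step: converting the one-variable definability condition of $P$-minimality into an algebraic existence statement about roots of polynomials. The bridge combines Newton iteration inside $\CO$ (which only uses field operations and divisibility, so is available in any $p$-valued field) with saturation in $\CK^*$ and the simple qf $\CL_p$-description of zero sets. A secondary subtlety is the value-group step, where one must use the $P_n$-predicates crucially to pull back all $n$-congruence information to $\Gamma$; without them one would only recover a discretely ordered group without the full Presburger divisibility structure.
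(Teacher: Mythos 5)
The paper states this as a Fact cited from \cite{haskell1997version} and provides no proof of its own, so your sketch must be judged on its own merits. It does not close the gap; the crux of the henselianity step is missing.

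You apply $P$-minimality to the zero set $\{x\in\CO^{*}:f(x)=0\}$, but this set is \emph{already} quantifier-free $\CL_p$-definable via the atomic formula $f(x)=0$, so invoking $P$-minimality here is vacuous. The pivotal assertion --- that saturation plus the qf $\CL_p$-description plus the Newton approximations $x_n$ with $v(f(x_n))\geq n$ ``forces the zero set to be non-empty in $\CK^{*}$'' --- is stated but not argued, and cannot follow from the data in hand: in a saturated $\CK^{*}$ the partial type $\{v(f(x))>n:n\in\NN\}\cup\{x\in\CO^{*}\}$ is realized by some $\alpha$, but this only makes $v(f(\alpha))$ a nonstandard positive element of the value group, not $f(\alpha)=0$. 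Indeed, Newton iterates with $v(f(x_n))\geq n$ exist in every $p$-valued field, henselian or not; for instance $x^{2}-2$ over $(\QQ,v_{7})$ has approximate roots of arbitrarily high $7$-adic valuation and no root. To use $P$-minimality non-vacuously you would need to apply it to a definable object whose quantifier-free $\CL_p$-definability is \emph{not} automatic --- e.g.\ the image $f(\CO)$, or structural consequences for one-variable definable functions (which is closer to what Haskell--Macpherson actually do) --- and then extract a contradiction from the assumption that the root set is empty. That bridge is entirely absent.

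Two further cautions. The value-group step concludes $\Gamma\equiv\ZZ$ from ``definable one-variable subsets of $\Gamma$ are Presburger-like'', but being a $\ZZ$-group additionally requires $\Gamma/n\Gamma\cong\ZZ/n\ZZ$ for all $n$, and that needs its own argument beyond simplicity of definable sets. And in the ``in particular'' clause, ``henselian, discretely valued, mixed characteristic $(0,p)$, finite residue field'' does \emph{not} imply ``finite extension of $\QQ_p$'' without completeness (the henselization of $\QQ$ at $p$ is a countable counterexample satisfying all those properties); in any case the conclusion actually used later in the paper, finiteness of the residue field, already follows directly from the $p$-valued hypothesis (finite $\mathbb{F}_p$-dimension of $\CO/p\CO$) and needs neither $\Gamma=\ZZ$ nor henselianity.
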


In this section we will prove the following.
\begin{thm} \label{thm: BM in P-min gen}
Let $\CM$ be a $P$-minimal expansion of a field, and assume that $\CM$ has definable Skolem functions and the value group of $\CM$ (i.e. the value group of the underlying $p$-adically closed field) is $\mathbb{Z}$. Then for any formula
$\varphi(x_1,\dotsc,x_r; z) \in \CL(\CM)$, with all $x_i$ singletons, there is a
constant $C=C (\varphi)$ such that 
\[R^*_\varphi(n) \leq 2^{2^{Cn}} \]
for all sufficiently large $n \in \NN$.

\end{thm}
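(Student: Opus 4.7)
The plan is to mirror the strategy of Section \ref{sec: BukhMat in polybdd}, with the $p$-adic valuation playing the role of the order on $\RR$. Three ingredients are needed: (i) a global $\CM$-invariant type on ``elements of very negative valuation''; (ii) a finitary ``valuationally $h$-growing'' notion of sequence approximating Morley sequences in that type; and (iii) a $p$-adic analogue of the Bukh--Matousek combinatorial lemma (Fact \ref{prop:comb}).

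First, I would fix a saturated elementary extension $\mathbb{M} \succ \CM$ and define a global $\CM$-invariant type $\tilde p(x) \in S_1(\mathbb{M})$ concentrated on $\alpha$ with $v(\alpha) < v(m)$ for all $m \in \mathbb{M}$. By $P$-minimality every one-variable $\CL(\mathbb{M})$-formula is quantifier-free $\CL_p$-definable, so to obtain a complete type it suffices to specify, in an $\Aut(\mathbb{M}/\CM)$-invariant fashion, the $P_n$-class of $x$ for every $n$ (for instance by declaring $P_n(x)$ to hold for all $n$). Definable Skolem functions then guarantee $M\langle\alpha\rangle \preceq \mathbb{M}$ for any $\alpha \models \tilde p|_M$. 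The technical heart of this step is the $P$-minimal analogue of Fact \ref{fac: poly bdd growth}: if $\alpha \models \tilde p|_M$, then for every $m \in M\langle\alpha\rangle$ there exists $N \in \NN$ with $v(m) \geq N\cdot v(\alpha)$. Granted this, the exact analogue of Lemma \ref{lem: infty1 iff R-growing} characterizes tuples $(\alpha_1,\dotsc,\alpha_n) \models \tilde p^{(n)}|_M$ by $v(\alpha_1) < v(m)$ for all $m\in M$ and $v(\alpha_{i+1}) < h\cdot v(\alpha_i)$ for every $h\in \NN$ and $i<n$.

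Calling a sequence $(a_1,\dotsc,a_N)$ in $M$ \emph{valuationally $h$-growing} if $v(a_1) < -h$ and $v(a_{i+1}) < h\cdot v(a_i)$ for each $i < N$, compactness together with the preceding characterization yields the analogue of Lemma \ref{lem: o-min indisc without params}: for every finite $\Delta(x_1,\dotsc,x_l)$ over $M$ some $h$ makes every valuationally $h$-growing sequence $\Delta$-indiscernible over $\emptyset$. Combining with Lemma \ref{lem: finitary shrinking} (applicable in any NIP structure, and in particular in $\CM$ via Facts \ref{fac: UDTFS} and \ref{fac: bounds on UDTFS}(2)) one obtains the parametrized version for $\Delta(x_1,\dotsc,x_l;y)$: a valuationally $h$-growing sequence of length $N$ contains, for every parameter $b$, a $\Delta(\cdot;b)$-indiscernible subsequence of linear length. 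Finally, I would prove a $p$-adic analogue of the Bukh--Matousek lemma (Fact \ref{prop:comb}): for $n, h$ large, any sequence in $M$ of length $\leq 2^{h^{2n}}$ contains a subsequence of length $n$ that is the image of some valuationally $h$-growing $\vec b$ under either a map $b_i \mapsto A + Bb_i$ or $b_i \mapsto A + B/b_i$ (read in either order), for some $A,B \in M$. This should follow by iterated pigeonhole on the valuations of consecutive differences and on the residues modulo powers of the uniformizer, exploiting finiteness of the residue field (implied by the value-group-$\ZZ$ hypothesis, by Fact \ref{fac: P-minimal fields}). Applying the parametrized step to the finite family $\Delta$ consisting of the four affine/inversion twists of $\varphi$ then yields $R^*_\varphi(n) \leq 2^{2^{Cn}}$ exactly as in the proof of Theorem \ref{thm:main}.

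The main obstacle I expect is the ``polynomial boundedness in the value group'' step: unlike in the $o$-minimal case, where polynomial boundedness is an explicit external hypothesis on one-variable functions, its $P$-minimal analogue requires a finer structural analysis of definable one-variable functions in $P$-minimal fields with value group $\ZZ$ (and is presumably the issue alluded to in the acknowledgement to Cubides Kovacsics). The $p$-adic combinatorial lemma in the last step, while not automatic, should admit a direct adaptation of the pigeonhole arguments of \cite{bukh2014erdHos}.
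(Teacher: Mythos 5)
Your outline follows the same overall route as the paper (an invariant type at extreme valuation, a finitary growth condition approximating its Morley sequences, UDTFS-based shrinking, and a $p$-adic Bukh--Matousek lemma), but there is a genuine gap in your step (ii). Unlike the $o$-minimal case, where ``$+\infty$'' determines a unique complete type, the condition $v(x)<v(m)$ for all $m$ does not by itself determine a type: one must also fix the coset data given by the parameter-free formulas (the $P_n$'s, and in fact $P_n$ and $V$ applied to rational functions of $x$ over $\mathbb{M}$ --- reducing those to conditions on $x$ itself is exactly the content of Proposition \ref{prop:p-adic-types}, which needs the henselian root-extraction argument of Fact \ref{fact:roots} and the lowest-coefficient valuation computation, not just the phrase ``specify the $P_n$-class of $x$''). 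Consequently your claimed characterization of $(\alpha_1,\dotsc,\alpha_n)\models\tilde p^{(n)}|_M$ purely by valuation growth is false: in $\QQ_p$ an element of odd (very negative) valuation satisfies the cut condition but is not a square, so it cannot realize your chosen $\tilde p$. For the same reason the asserted analogue of Lemma \ref{lem: o-min indisc without params} fails: a valuationally $h$-growing sequence can alternate between $P_n$-classes, so it need not be $\Delta$-indiscernible for any $h$ (already for $\Delta=\{P_2(x_1)\}$). The paper repairs precisely this point: Lemma \ref{lem: p-adic growth condition}(2) additionally requires each $\alpha_i$ to realize the same type over $\emptyset$, and Lemma \ref{thm:value-grow} therefore only yields a $\Delta$-indiscernible \emph{subsequence} of length $N/d_0$, obtained by pigeonholing the growing sequence onto constant truth values of the finitely many parameter-free formulas produced by compactness ($d_0=2^s$). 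This constant-factor loss is harmless for the final bound, but without it your argument does not go through as written, and fixing a single completion of the type (``$P_n(x)$ for all $n$'') does not interface with the combinatorial lemma, which produces growing sequences with no control on their coset data.

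Two further ingredients are named but not supplied. The cofinality statement ($\{N\,v(\alpha)\}$ cofinal in the value group of $M\langle\alpha\rangle$) is indeed the crux; the paper derives it (Lemma \ref{lem: poly bdd implies cofinal}) from uniform polynomial boundedness of $P$-minimal structures whose value group is elementarily equivalent to $\ZZ$ (Fact \ref{fac: P-min poly bdd}, after Darni\`ere--Cubides Kovacsics), whereas you leave it as an acknowledged unknown. And the combinatorial lemma is not a routine transcription of \cite{bukh2014erdHos}: the paper first disposes of repeated elements, then uses a ball-chain argument (Lemma \ref{lem:comb1}, Proposition \ref{prop:diff-seq}) to extract, at exponential cost $|A|\geq 2q^{k-1}$, elements whose shifted differences have pairwise distinct valuations, then applies Erd\H{o}s--Szekeres and the doubling lemma (Fact \ref{fac: doubling subsequence}) to the integer sequence of valuations; the definable maps that arise are, as you predict, of affine/inversion type, but the quantitative bookkeeping of these successive losses is exactly where the threshold $2^{2^{Ck}}$ comes from, and your ``iterated pigeonhole'' sketch does not yet account for it.
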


It is well known that all of the structures in Example \ref{ex: P-minimal structs} satisfy the assumptions of the theorem.

~

Given a $P$-minimal expansion of a field $\CM$, we will write $\Gamma_{\CM}$ to denote its value group. It is well-known that $\Gamma_{\CM}$ is interpretable in $\CM$.
\begin{defn}(\cite[Definition 4.4]{kovacsics2018definable})\label{def: poly bdd P-min}
A $P$-minimal structure $\CM$ is \emph{uniformly polynomially bounded} if, for all definable sets $X, W$ and every definable family of functions $f:X \times W \to M$, there is some $n \in \mathbb{N}$ and a definable function $a: W \to \Gamma_{\CM}$ such that for each $w \in W$ we have $v(f_w(x)) > n v(x)$ for all $x \in X$ with $v(x) < a(w)$. 
\end{defn}

The next fact is immediate from \cite[Lemma 4.3]{darniere2017cell} (as their  ``Extreme Value Property'' holds in every $P$-minimal expansion of a field elementarily equivalent to one with the value group $\mathbb{Z}$; see the discussion in \cite[Page 123]{darniere2017cell})  and compactness. 
\begin{fact}\label{fac: P-min poly bdd}
Let $\CM$ be a $P$-minimal expansion of a field elementarily equivalent to a structure with the value group $\mathbb{Z}$. Then $\CM$ is uniformly polynomially bounded.
\end{fact}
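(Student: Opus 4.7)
The plan is to deduce Fact \ref{fac: P-min poly bdd} by combining \cite[Lemma 4.3]{darniere2017cell} (pointwise polynomial boundedness under the Extreme Value Property) with compactness and the discreteness of the value group $\Gamma_{\CM} \equiv \ZZ$. Since the Extreme Value Property is first-order, it is preserved under elementary equivalence, so by hypothesis it holds in $\CM$ and in every elementary extension of $\CM$; hence Lemma 4.3 applies to each such structure. Applying it fibrewise to the family $f: X \ttimes W \to M$ yields, for each $w \in W$, some $n_w \in \NN$ and $\gamma_w \in \Gamma_{\CM}$ with $v(f_w(x)) > n_w v(x)$ whenever $v(x) < \gamma_w$.

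To obtain a single uniform exponent $n$, I would argue by contradiction: if no such $n$ exists, then for each $n \in \NN$ the $\CL(\CM)$-formula
\[ \psi_n(w) \; : \; \forall \gamma \in \Gamma_{\CM}\; \exists x \in X\; \bigl(v(x) < \gamma \wedge v(f(x,w)) \leq n v(x)\bigr) \]
is satisfiable in $\CM$. Moreover $\psi_n \Rightarrow \psi_m$ for $m \leq n$ (a witnessing $x$ for $\psi_n$ can be taken with $v(x) < 0$, so $nv(x) \leq mv(x)$), so finite conjunctions of the schema $\{\psi_n : n \in \NN\}$ reduce to the largest instance and are satisfiable. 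By saturation the schema is realised by some $w^* \in \CM^* \succeq \CM$, and then $f_{w^*}$ contradicts the pointwise application of Lemma 4.3 in $\CM^*$.

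With the uniform exponent $n$ in hand, the set
\[ S(w) \; = \; \bigl\{ \gamma \in \Gamma_{\CM} : \forall x\; (v(x) < \gamma \to v(f_w(x)) > n v(x)) \bigr\} \]
is definable uniformly in $w$ and downward closed in $\Gamma_{\CM}$. Because $\Gamma_{\CM} \equiv \ZZ$ is discretely ordered, $S(w)$ is either all of $\Gamma_{\CM}$ or of the form $(-\infty, \gamma_0]$ for a uniformly definable $\gamma_0 = \gamma_0(w)$; setting $a(w) := \gamma_0(w)$ in the latter case and $a(w) := 0$ in the former yields the required definable witness. The principal subtlety is the compactness step, namely confirming that the failure of uniform polynomial boundedness descends to a finitely satisfiable partial type in the single variable $w$ indexed by external natural numbers; this is what the monotonicity observation $\psi_n \Rightarrow \psi_m$ ($m \leq n$) achieves, and it is what allows us to transfer the counterexample into an elementary extension where pointwise Lemma 4.3 still applies.
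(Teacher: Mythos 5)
Your proposal is correct and takes essentially the same route as the paper, which simply observes that the statement is immediate from \cite[Lemma 4.3]{darniere2017cell} (whose Extreme Value Property hypothesis holds in any $P$-minimal field elementarily equivalent to one with value group $\mathbb{Z}$) together with compactness; you have merely written out the compactness step (the monotonicity $\psi_n \Rightarrow \psi_m$ and saturation) and the extraction of the definable cutoff $a(w)$ from discreteness of $\Gamma$, which the paper leaves implicit. The only cosmetic point is that the Extreme Value Property is a first-order \emph{schema} rather than a single sentence, but this does not matter: any elementary extension of $\CM$ is still elementarily equivalent to the structure with value group $\mathbb{Z}$, so the cited discussion applies to it directly.
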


From now on, we fix  a $P$-minimal expansion $\CM_0$ of a field  with the value group $\Gamma_{\CM_0} = \mathbb{Z}$ and definable Skolem functions in a language $\CL$. We will denote by $T$ the complete theory of $\CM_0$, and  we also fix  a large sufficiently saturated
and homogeneous model $\UU$ of $T$. We are following the same strategy as in Section \ref{sec: BukhMat in polybdd}. First we isolate some sufficiently representative global invariant types (in the $o$-minimal case, working with a single type of ``$+ \infty$'' was sufficient).
\begin{prop}
  \label{prop:p-adic-types}
Let $\CM\prec\UU$ be a small model of $T$ and let $\alpha_1,\alpha_2\in \UU$ be
singletons with $\alpha_1\equiv_\emptyset
\alpha_2$ and  $v(\alpha_l)>v(m)$
for $l=1,2$ and every $m\in M$. Then $\alpha_1\equiv_M \alpha_2$.
\end{prop}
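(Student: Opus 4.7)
The plan is to mirror the $o$-minimal argument for the analogous Fact \ref{fact:obvious}, first reducing via $P$-minimality to equivalence with respect to quantifier-free $\CL_p(M)$-formulas, and then handling the three types of atomic $\CL_p$-formulas separately --- with the $P_n$ predicates requiring the bulk of the work.

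For the reduction, any $\CL(M)$-formula $\varphi(x,\bar b)$, viewed inside $M$, defines (by $P$-minimality applied in $M$) the same subset as some quantifier-free $\CL_p(M)$-formula $\psi(x)$, and since $M\prec\UU$ this equivalence transfers to $\UU$. So it suffices to show that $\alpha_1$ and $\alpha_2$ satisfy the same atomic $\CL_p(M)$-formulas in $x$: equations $f(x)=0$, valuation conditions $V(f(x))$, and power predicates $P_n(f(x))$, where $f(x)=P(x)/Q(x)$ for some $P,Q\in M[x]$. For the first two families, a unique-minimum argument handles things cleanly: writing $P(x)=\sum_i c_ix^i$ with smallest nonzero coefficient $c_{i_0}$, the hypothesis $v(\alpha_l)>v(M)$ yields $v(P(\alpha_l))=v(c_{i_0})+i_0\,v(\alpha_l)$. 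This shows each $\alpha_l$ is transcendental over $M$ (so equational formulas are trivially preserved), and that the truth of $v(P(\alpha))\geq v(Q(\alpha))$ reduces to a condition on the sign of $i_0-j_0$ or on the coefficient valuations (when $i_0=j_0$), neither depending on $\alpha$ itself.

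The heart of the argument is the $P_n$ case. Factoring out leading terms we obtain $f(\alpha_l)=u\,\alpha_l^k(1+\eta_l)$ with $u\in M^*$ and $k\in\ZZ$ depending only on $P,Q$, and $v(\eta_l)>v(M)\ni 2v(n)$; so Hensel's lemma applied to $(1+T)^n=1+\eta_l$ gives $1+\eta_l\in P_n$, reducing to $P_n(u\alpha_1^k)\iff P_n(u\alpha_2^k)$. This in turn would follow from $\alpha_1,\alpha_2$ belonging to the same coset of $P_n$ in $\UU^*$. For that last step I would use that $\UU^*/P_n$ is a finite $\emptyset$-definable group (since the $\CL_p$-reduct of $\UU$ is $p$-adically closed) together with the hypothesis of definable Skolem functions, from which an easy induction on cardinality shows every element of a finite $\emptyset$-definable set lies in $\dcl(\emptyset)$; consequently each coset of $P_n$ is $\emptyset$-definable as a subset of $\UU^*$, and $\alpha_1\equiv_\emptyset\alpha_2$ forces them into the same one. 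This coset argument is the main obstacle, and it is precisely where the hypotheses of value group $\ZZ$ (to ensure finiteness of $\UU^*/P_n$) and definable Skolem functions (to force each finite $\emptyset$-definable set into $\dcl(\emptyset)$) are simultaneously essential.
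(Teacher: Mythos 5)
Your reduction via $P$-minimality and your treatment of equations and $V$-conditions coincide exactly with the paper's; the interesting divergence is in the $P_n$ case, where the initial steps (factor out the leading term, Hensel to dispose of the $1+\eta_l$ factor) again match, but the final step differs. The paper concludes by asserting that each coefficient $a_i\in M$ can be written as $a_i=b^nc$ with $b\in M$ and $c\in\ZZ$, i.e.\ that $\ZZ$ supplies a full set of coset representatives for $P_n$; you instead argue abstractly that each coset of $P_n(\UU)$ is $\emptyset$-definable, which is both cleaner and more robust (the literal claim $c\in\ZZ$ is delicate once one moves from $\mathbb{Q}_p$ to ramified finite extensions, where $\ZZ$ is not dense in $\CO^*$ and $v(\ZZ)\subsetneq v(\CO)$, so the correct invariant is really $\dcl(\emptyset)$ rather than $\ZZ$). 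Your diagnosis of where the two hypotheses (value group $\ZZ$, Skolem functions) are used is also apt.

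However, there is a small gap in how you derive $\emptyset$-definability of the cosets. Your auxiliary claim — every element of a finite $\emptyset$-definable subset of $\UU^k$ lies in $\dcl(\emptyset)$, by Skolem functions plus induction on cardinality — is correct, but it does not directly apply: the cosets of $P_n$ are finite in number but are definable \emph{subsets} of $\UU^*$, not elements of some $\UU^k$, and passing through the interpretable quotient $\UU^*/P_n$ smuggles in a statement about $\dcl^{\mathrm{eq}}(\emptyset)$ that your induction (phrased for home-sort definable sets and home-sort Skolem functions) does not immediately cover. The repair is short: since $T$ has definable Skolem functions, $\CM':=\dcl(\emptyset)\prec\UU$; since $P_n$ is $\emptyset$-definable, $P_n(\UU)\cap\CM'=P_n(\CM')$, so the induced map $(\CM')^*/P_n(\CM')\to\UU^*/P_n(\UU)$ is injective; by elementarity both quotients have the same finite cardinality $m$, so the map is onto, meaning every coset of $P_n(\UU)$ has a representative $c\in\dcl(\emptyset)$ and is therefore defined over $\emptyset$ by $x\neq 0\wedge P_n(x/c)$. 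With that substitution your argument closes cleanly, and in fact recovers the intended reading of the paper's $c\in\ZZ$ step.
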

\begin{proof} Since $\alpha_1$ and $\alpha_2$ are singletons, by
  $P$-minimality, we need to show the following:
  \begin{enumerate}[itemsep=5pt]
  \item $p(\alpha_1)=0$ if and only if $p(\alpha_2)=0$ for any polynomial $p(x)\in
    M[x]$;

  \item  $\models V(p(\alpha_1)/q(\alpha_1))$ if and only if
    $\models V(p(\alpha_2)/q(\alpha_2))$, for  any $p(x),q(x)\in
    M[x]$;
  \item  $\models P_n(p(\alpha_1)/q(\alpha_1))$ if and only if
    $\models P_n(p(\alpha_2)/q(\alpha_2))$, for  any $n\geq 2$ and $p(x),q(x)\in
    M[x]$.
 \end{enumerate}

Now (1) holds since the assumption implies that both $\alpha_1$ and $\alpha_2$
are transcendental over $M$: if $p(\alpha_l) = 0$ for some $p(x) \in M[x]$, then, as $\CM$ is a model and $p(x) = 0$ is an $M$-definable algebraic set, necessarily $\alpha_l \in M$, but $v(\alpha_l) \not > v(\alpha_l) \in M$, contradicting the assumption. And (2) is equivalent to: 
\[ v(p(\alpha_1)) \geq v(q(\alpha_1)) \\\text{ if and
    only if }v(p(\alpha_2)) \geq v(q(\alpha_2)),
\]
for any non-zero $p(x),q(x)\in M[x]$. Let $p(x)=a_0+a_1x+\dotsc+a_kx^k$ and
$q(x)= b_0+b_1x+\dotsc+b_sx^s$. Let $i$ be minimal with $a_i\neq 0$ and $j$ be minimal with $b_j\neq
0$. Then, for $l=1,2$ we have
$v(p(\alpha_l))=v(a_i\alpha_l^i)=v(a_i)+iv(\alpha_l)$, 
and $v(q(\alpha_l))=v(b_j\alpha_l^j)=v(b_j)+jv(\alpha_l)$.
Thus $v(p(\alpha_l))\geq v(q(\alpha_l))$ if and only if $i>j$, or $i=j$ and
$v(a_i)\geq v(b_j)$. The latter condition is independent of $l$.

Finally, we demonstrate (3).  It is easy to see that $\models P_n(p(\alpha_l)/q(\alpha_l))$ if and only if
$\models P_n(p(\alpha_l)q^{n-1}(\alpha_l))$. Thus we need to show that
$\models P_n(p(\alpha_1))$ if and only if $\models P_n(p(\alpha_2))$, for any $p(x)\in M[x]$.

We will need the following fact that follows easily from henselianity of $\CM$ (which holds by Fact \ref{fac: P-minimal fields}).

\begin{fact}
  \label{fact:roots}
If $\varepsilon\in \UU$ satisfies $v(\varepsilon)>k$ for all $k\in
\NN$ then for any $n\in \NN$ the element $1+\varepsilon$ has $n$-th root. 
\end{fact}

Let $p(x)=a_0+a_1x+\dotsc a_kx_k$ be a nonzero polynomial over $M$ and
choose minimal $i$ such that $a_i\neq 0$.  Then, for $l=1,2$ we have
$p(\alpha_l)=a_i\alpha_l^i(1+\varepsilon_l)$ with $v(\varepsilon_l)>
k$ for all $k \in \mathbb{N}$,  and
$p(\alpha_l)$ has $n$-th root if and only if $a_i\alpha_l^i$ had $n$-th
root.  We can find $b_i\in M$ and $c\in \ZZ$ such that $a_i= b^nc$.
Hence $a_i\alpha_l^i$ has $n$-th root if and only if $c\alpha_l^i$
does. Since $\ZZ$ is in the definable closure of $\emptyset$, we have 
$\alpha_1 \equiv_\ZZ \alpha_2$  and $P_n(c\alpha_1)$ if and only if
$P_n(c\alpha_2)$.
\end{proof}

\begin{lem}\label{lem: poly bdd implies cofinal}
Let $\CM$ be a small model of $T$ and let $\alpha\in \UU$ satisfy
$v(\alpha)>v(m)$ for every $m\in M$. Then the sequence $\{ nv(\alpha) : 
n\in \NN \}$ is cofinal in the value group of $M\la \alpha\ra$, where
$M\la \alpha\ra$ is a prime model over $M\cup\{\alpha\}$ (i.e. $M\la \alpha\ra=\dcl(M\cup\{\alpha\})$). 
\end{lem}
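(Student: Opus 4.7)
The plan is to reduce the cofinality assertion to the following pointwise bound: for every nonzero $\beta \in M\la\alpha\ra$, there exists $n \in \NN$ with $v(\beta) < n\cdot v(\alpha)$. Since $\Gamma_{M\la\alpha\ra} = v(M\la\alpha\ra^{\times})$, this immediately yields cofinality of $\{nv(\alpha) : n \in \NN\}$. Since $\CM$ has definable Skolem functions, $M\la\alpha\ra = \dcl(M \cup \{\alpha\})$, so any such $\beta$ has the form $f(\alpha)$ for some $M$-definable (partial) function $f$ with $f(\alpha) \neq 0$. I would then pass to the reciprocal variable by introducing the $M$-definable partial function $G(y) := 1/f(1/y)$, which is defined at $y = 1/\alpha$ (since $f(\alpha) \neq 0$), with $G(1/\alpha) = 1/f(\alpha)$.

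The main step is to invoke uniform polynomial boundedness (Fact \ref{fac: P-min poly bdd}), which applies to $\CM$ because $\CM \equiv \CM_0$ and $\CM_0$ has value group $\ZZ$. Applied to the one-function family $\{G\}$, UPB yields $n \in \NN$ and a constant $a \in \Gamma_{\CM} = \ZZ$ such that $v(G(y)) > n\cdot v(y)$ for every $y$ in the domain of $G$ with $v(y) < a$; by elementary equivalence, the same statement transfers to $\UU$ with the same $n$ and $a$. Substituting $y = 1/\alpha$, the hypothesis $v(\alpha) > v(m)$ for all $m \in M$ forces $v(1/\alpha) = -v(\alpha) < a$ (since $a$ is a standard integer), and $1/\alpha$ lies in the domain of $G$; hence $v(1/f(\alpha)) > n\cdot v(1/\alpha) = -n\cdot v(\alpha)$, i.e., $v(f(\alpha)) < n\cdot v(\alpha)$, as required.

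I expect the only delicate aspect to be bookkeeping of the inequality directions when switching between $\alpha$ (which has very large valuation, hence is close to $0$ in $p$-adic norm) and $1/\alpha$ (which has very negative valuation, in the regime where UPB directly applies). Handling the edge case $f(\alpha) = 0$ is immediate, since then $\beta = 0$ contributes no element to $\Gamma_{M\la\alpha\ra}$. Once the reciprocal substitution is set up correctly, Fact \ref{fac: P-min poly bdd} together with the hypothesis on $v(\alpha)$ produces the linear bound, completing the proof.
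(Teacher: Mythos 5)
Your approach is essentially the same as the paper's: both pass to the reciprocal variable to convert the "valuation large" regime into the "valuation negative" regime where uniform polynomial boundedness applies, and both then invert the inequality to get the linear bound. The paper packages $\beta$ as $f_m(\alpha)$ for an $\emptyset$-definable family indexed by a tuple $m\in M$ and then applies Fact~\ref{fac: P-min poly bdd} to the corresponding family $g_m(x)=1/f_m(1/x)$, obtaining a definable threshold $a(m)\in\Gamma_M$; you absorb the parameter $m$ into a single $M$-definable function $G$, which is equally fine.

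There is one slip you should repair: you write $a\in\Gamma_{\CM}=\ZZ$ and later justify $-v(\alpha)<a$ by saying "$a$ is a standard integer". But $\CM$ is just a small model of $T$, hence only elementarily equivalent to $\CM_0$; its value group $\Gamma_{\CM}$ is elementarily equivalent to $\ZZ$ but need not be $\ZZ$, so $a$ can be a nonstandard element. The correct justification does not need $a$ to be standard: since $a\in\Gamma_{\CM}=v(M^{\times})$, we have $-a=v(m)$ for some nonzero $m\in M$, and the hypothesis $v(\alpha)>v(m)$ for all $m\in M$ then gives $v(\alpha)>-a$ directly, i.e.~$v(1/\alpha)=-v(\alpha)<a$. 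This is exactly how the paper argues (it only uses $a(m)\in\Gamma_M$, not that $a(m)$ is a standard integer). With that one-line fix, your proof is correct and coincides with the paper's.
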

\begin{proof}

Let $\gamma \in \Gamma_{M\la \alpha\ra}$ be arbitrary. As $\gamma \in \dcl(M \cup \{ \alpha \})$, we  have that $\gamma = v \left( f_m(\alpha) \right)$ for some $\emptyset$-definable family of functions $f$ and some tuple $m$ in $M$. Consider the $\emptyset$-definable family of functions $g: X \times \mathbb{M} \to \mathbb{M}$ with $X = \mathbb{M} \setminus \{0\}$ given by $g_m(x) := \frac{1}{f_m \left(\frac{1}{x} \right)}$.

Let $n \in \mathbb{N}$ and the definable map $a: M \to \Gamma_M$ be  given by Definition \ref{def: poly bdd P-min} for the family $g$ using Fact \ref{fac: P-min poly bdd}. Then $-a(m) \in \Gamma_M$, and so $v(\alpha)  > -a(m)$ by assumption. Hence $v\left(\frac{1}{\alpha}\right) = - v(\alpha) < a(m)$  and  so we have
$$ -v(f_m(\alpha)) = v \left(\frac{1}{f_m(\alpha)} \right)= v \left( g_m \left(\frac{1}{\alpha} \right) \right) > n v \left(\frac{1}{\alpha} \right) = -n v(\alpha).$$
Hence $\gamma = v(f_m(\alpha)) < nv(\alpha)$, as wanted.
\end{proof}

\begin{lem} \label{lem: p-adic growth condition}
Let $p \in S_1(\emptyset)$ be arbitrary. 
\begin{enumerate}
\item 	There is at most one  global type $\tilde{p} \in S_1(\UU)$ such that $\tilde{p} \supseteq p \cup \{ v(x) > v(m) : m \in \mathbb{M} \}$, and $\tilde{p}$ is $\emptyset$-invariant. 
\item Assume $\tilde p$ as in (1) exists.  	Let $\CM \prec \mathbb{M}$ and $\alpha_1,\dotsc,\alpha_n \in \mathbb{M}$.  Then $(\alpha_1,\dotsc \alpha_n)$
realizes $\tilde p^{(n)}|M$ if and only if each $\alpha_i$ realizes $p$, 
$v(\alpha_1)> v(m)$ for all $m\in M$ and $v(\alpha_{i+1})>kv(\alpha_i)$ for all $k\in \NN$ and $i=1,\dotsc,n-1$.

\end{enumerate}

\end{lem}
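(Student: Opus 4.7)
The plan is to establish part (1) by bootstrapping uniqueness from Proposition \ref{prop:p-adic-types}, and then deduce part (2) inductively, mirroring the $o$-minimal argument in Lemma \ref{lem: infty1 iff R-growing} but using Lemma \ref{lem: poly bdd implies cofinal} in place of Fact \ref{fac: poly bdd growth}.

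For part (1), suppose $\tilde p_1, \tilde p_2 \in S_1(\UU)$ are two global types containing $p \cup \{v(x) > v(m) : m \in \UU\}$. Fix any small $\CM \prec \UU$ and pick realizations $\alpha_l \models \tilde p_l|_M$ inside $\UU$ for $l=1,2$. Since both $\alpha_l$ realize $p$ they satisfy $\alpha_1 \equiv_\emptyset \alpha_2$, and by construction $v(\alpha_l) > v(m)$ for all $m \in M$, so Proposition \ref{prop:p-adic-types} gives $\alpha_1 \equiv_M \alpha_2$, i.e.\ $\tilde p_1|_M = \tilde p_2|_M$. As $M$ was arbitrary, $\tilde p_1 = \tilde p_2$. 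For $\emptyset$-invariance, note that for any $\sigma \in \Aut(\UU)$ the type $\sigma(\tilde p)$ again extends $p$ (as $p$ is over $\emptyset$) and still contains $\{v(x) > v(m) : m \in \UU\}$ (as $\sigma$ permutes $\UU$), so the uniqueness just proved forces $\sigma(\tilde p) = \tilde p$.

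For part (2), set $M_0 = M$ and $M_i = M_{i-1}\la \alpha_i\ra = \dcl(M_{i-1} \cup \{\alpha_i\})$. The forward direction is straightforward: by definition of $\tilde p^{(n)}|_M$ we have $\alpha_{i+1} \models \tilde p|_{M_i}$, so $\alpha_{i+1}$ realizes $p$ and $v(\alpha_{i+1}) > v(m)$ for every $m \in M_i$; in particular, applying Lemma \ref{lem: poly bdd implies cofinal} to $M_{i-1}$ and $\alpha_i$ (which is legitimate since inductively $v(\alpha_i) > v(m')$ for all $m' \in M_{i-1}$), we learn that $\{k v(\alpha_i) : k \in \NN\}$ is cofinal in the value group of $M_i$, so the inequality $v(\alpha_{i+1}) > k v(\alpha_i)$ for all $k$ is just a consequence of $v(\alpha_{i+1})$ dominating the whole value group of $M_i$.

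For the backward direction, we show inductively that $\alpha_{i+1} \models \tilde p|_{M_i}$. Assuming by induction that $v(\alpha_j) > v(m)$ for all $m \in M_{j-1}$ and $j \leq i$, Lemma \ref{lem: poly bdd implies cofinal} applied to $M_{i-1}, \alpha_i$ shows that every element of the value group of $M_i$ is bounded above by some $k v(\alpha_i)$, $k \in \NN$. Combined with the hypothesis $v(\alpha_{i+1}) > k v(\alpha_i)$ for all $k$, this yields $v(\alpha_{i+1}) > v(m)$ for every $m \in M_i$. Since $\alpha_{i+1}$ realizes $p$ by assumption, Proposition \ref{prop:p-adic-types} applied with the small model $M_i$ gives $\alpha_{i+1} \equiv_{M_i} \beta$ for any $\beta \models \tilde p|_{M_i}$, so $\alpha_{i+1} \models \tilde p|_{M_i}$, which is exactly the Morley condition. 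The only subtlety, and the main thing to track, is the iterated application of Lemma \ref{lem: poly bdd implies cofinal} to ensure that $v(\alpha_{i+1})$ dominates the \emph{entire} value group of $M_i$ (not merely the valuations of elements of $M_{i-1} \cup \{\alpha_i\}$); this is what makes the cofinality hypothesis $v(\alpha_{i+1}) > k v(\alpha_i)$ sufficient.
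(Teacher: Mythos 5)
Your proof is correct and follows essentially the same route as the paper: part (1) is the intended unpacking of Proposition \ref{prop:p-adic-types} (comparing restrictions to arbitrary small models and deducing invariance from uniqueness), and part (2) mirrors the argument of Lemma \ref{lem: infty1 iff R-growing}, with Lemma \ref{lem: poly bdd implies cofinal} supplying the cofinality of $\{k v(\alpha_i) : k \in \NN\}$ in the value group of $M_i = M_{i-1}\la \alpha_i \ra$, exactly as the paper indicates. The only cosmetic remark is that in the forward direction the cofinality lemma is not needed, since $\alpha_i^k \in M_i$ already gives $v(\alpha_{i+1}) > v(\alpha_i^k) = k\,v(\alpha_i)$ directly.
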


\begin{proof}
Part (1) follows from Proposition \ref{prop:p-adic-types} and $P$-minimality. 

Part (2) follows by the same argument as in Lemma \ref{lem: infty1 iff R-growing} using Lemma \ref{lem: poly bdd implies cofinal}.
\end{proof}

\begin{defn} For an integer $n>0\in \NN$, we say that a sequence $a_i,
  i=1,\dotsc, L$, of elements of $\CM_0$ is \emph{linearly $n$-growing} if
  $v(a_0)>n$ and $v(a_{i+1})> nv(a_i)$ for all $i$.
\end{defn}

  Notice that a subsequence of a linearly $n$-growing sequence is also
  linearly $n$-growing.

\begin{lem}\label{thm:value-grow}
 For any finite set of formulas $\Delta(x_1,\dotsc,x_k)$ with
  parameters from $\CM_0$ there are $n\in
  \NN$ and $d_0\in \NN$ such that any linearly $n$-growing sequence of elements
  $a_i\in \CM_0$ of length $N$ contains a $\Delta$-indiscernible
  subsequence of length at least $\frac{N}{d_0}$. 
\end{lem}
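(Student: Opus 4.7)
The plan is to mimic the compactness argument from Lemma~\ref{lem: o-min indisc without params}, but to allow for the fact that in the $P$-minimal setting $S_1(\emptyset)$ can have many complete types rather than the single type at $+\infty$ used in the $o$-minimal case; this is where the multiplicative loss $d_0$ will come from.

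Working in the saturated elementary extension $\UU\succ\CM_0$, I consider the partial type $\Sigma(x_1,\dotsc,x_k,y_1,\dotsc,y_k)$ over $\emptyset$ consisting of: (i)~$\chi(x_1)\leftrightarrow\chi(z)$ for every 1-variable $\CL$-formula $\chi$ over $\emptyset$ and every $z\in\{x_2,\dotsc,x_k,y_1,\dotsc,y_k\}$; (ii)~$v(x_1)>n$ and $v(y_1)>n$ for every $n\in\NN$; and (iii)~$v(x_{i+1})>nv(x_i)$ and $v(y_{j+1})>nv(y_j)$ for every $n\in\NN$ and admissible $i,j$. The key claim is that $\Sigma\vdash\psi(x_1,\dotsc,x_k)\leftrightarrow\psi(y_1,\dotsc,y_k)$ for every $\psi\in\Delta$. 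Indeed, suppose $(a_1,\dotsc,a_k,b_1,\dotsc,b_k)\models\Sigma$ in $\UU$: clause~(i) yields a common type $p\in S_1(\emptyset)$ realized by all the $a_i$ and $b_j$; since the value group of $\CM_0$ is $\ZZ$, clause~(ii) upgrades to $v(a_1),v(b_1)>v(m)$ for every $m\in\CM_0$, so the partial type $p\cup\{v(x)>v(m):m\in\UU\}$ is consistent. Hence by Lemma~\ref{lem: p-adic growth condition}(1) the global $\emptyset$-invariant extension $\tilde p$ exists, and by Lemma~\ref{lem: p-adic growth condition}(2) combined with (iii) both $(a_1,\dotsc,a_k)$ and $(b_1,\dotsc,b_k)$ realize $\tilde p^{(k)}|\CM_0$; by Fact~\ref{fac: basic inv types} they then have the same type over $\CM_0$, so they agree on the $\CM_0$-formula $\psi$.

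By compactness, some finite $\Sigma_0\subseteq\Sigma$ already entails $\bigwedge_{\psi\in\Delta}\bigl(\psi(\bar x)\leftrightarrow\psi(\bar y)\bigr)$. Let $\chi_1,\dotsc,\chi_r$ be the 1-variable $\emptyset$-formulas whose instances appear in $\Sigma_0$ and let $n\in\NN$ be larger than any numerical threshold in $\Sigma_0$. Given any linearly $n$-growing sequence $(a_1,\dotsc,a_N)$ in $\CM_0$, I partition $\{a_1,\dotsc,a_N\}$ into the at most $2^r$ classes cut out by the $(\chi_1,\dotsc,\chi_r)$-pattern and extract the largest class to obtain a subsequence of length at least $N/2^r$ that is still linearly $n$-growing (since the $v(a_i)$ are positive and $n\geq 1$, sub-sequencing preserves the growth condition) and whose elements are pairwise $\chi_l$-equivalent for each $l$. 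Any two increasing $k$-subtuples of this subsequence, viewed as $\bar x$ and $\bar y$, then realize $\Sigma_0$ and so agree on every $\psi\in\Delta$, which gives $\Delta$-indiscernibility; set $d_0:=2^r$. The main technical point to watch is that the compactness implication must be uniform across all $p\in S_1(\emptyset)$ possibly arising; this is precisely what Lemma~\ref{lem: p-adic growth condition} furnishes, by characterizing Morley sequences of $\tilde p$ over $\CM_0$ purely through growth of valuations, independent of $p$ beyond the assertion that all tuple entries share the 1-type.
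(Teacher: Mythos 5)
Your proof is correct and follows essentially the same strategy as the paper: set up a partial type combining agreement of $\emptyset$-types with unbounded valuation growth, use Lemma~\ref{lem: p-adic growth condition} and Fact~\ref{fac: basic inv types} to see that any realization is a Morley sequence of an $\emptyset$-invariant type $\tilde p$ and hence $\CM_0$-indiscernible, apply compactness, and extract the largest $\chi$-pattern class of size $\geq N/2^r$. The only cosmetic difference is that you use two disjoint $k$-chains $\bar x,\bar y$ instead of the paper's single $2k$-chain, which works equally well; the one small imprecision is attributing the \emph{existence} of $\tilde p$ to Lemma~\ref{lem: p-adic growth condition}(1), which only gives uniqueness and invariance, but existence follows by a routine elementarity argument (using $\Gamma_{\CM_0}=\ZZ$ and $\CM_0\equiv\UU$) that the paper likewise elides.
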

\begin{proof}

  Let $\Sigma(x_1,\dotsc,x_{2k})$ be the partial type that is the
  union of 
\[
    \Sigma_1= \left\{\bigwedge_{1\leq i<i\leq 2k} \varphi(x_i)\leftrightarrow
    \varphi(x_j) \colon \varphi(x) \text{ is an $\CL$-formula over $\emptyset$} \right\} \]
and 
\[ \Sigma_2=\left\{( x_1 >n )\wedge \bigwedge_{i=1}^{2k-1} v(x_{i+1}) >n v(x_i) \colon n\in \NN\right\}. 
\]

Let $( a_i : 1 \leq i \leq N )$ be any sequence of elements in $\UU$ such that all of the $a_i$'s have the same type 
 over the empty set. Assume that $v(a_0) > \NN$ and $v(a_{i+1})
> nv(a_i)$ for every $i=1,\dotsc,N-1$ and $n \in \NN$. Then, by Lemma \ref{lem: p-adic growth condition}, the sequence
$( a_i : 1 \leq i \leq N )$ realizes $\tilde{p}^{(N)}|\CM_0$ for $p=\tp(a_1/\emptyset)$, and so is indiscernible over $\CM_0$ by Fact \ref{fac: basic inv types}. It follows that 
\[
\Sigma(x_1,\dotsc,x_{2k})\vdash \psi(x_1,\dotsc,x_k)\leftrightarrow
\psi(x_{i_1},\dotsc,x_{i_k})  
\]
for any $1\leq i_1<i_2<\dotsc<i_k\leq 2k$ and $\psi \in \Delta$.

By compactness, there are finite subsets $\Sigma_1^0\subseteq \Sigma_1$ and $\Sigma_2^0\subseteq \Sigma_2$
such that 
\[
\Sigma_1^0\cup \Sigma_2^0\vdash  \psi(x_1,\dotsc,x_k)\leftrightarrow
\psi(x_{i_1},\dotsc,x_{i_k})  
\]
for any $1\leq i_1<i_2<\dotsc<i_k\leq 2k$ and $\psi \in \Delta$.

Let $\varphi_1,\dotsc,\varphi_s$ be all $\CL$-formulas over $\emptyset$ appearing
in $\Sigma_1^0$, and let $n\in \NN$ be maximal such that the condition $v(x_{i+1}) > nv(x_i)$ appears in  $\Sigma_2^0$.

Let $d_0=2^s$. Now any linearly $n$-growing sequence of length $N$ contains
a subsequence of length at least $\frac{N}{d_0}$ satisfying the same
$\varphi_1,\dotsc, \varphi_s$, and this subsequence is $\Delta$-indiscernible. 
\end{proof}

As in the $o$-minimal case, combining Lemma \ref{lem: finitary shrinking} with Lemma \ref{thm:value-grow} we can also allow additional parameters in $\Delta$.

\begin{cor}\label{thm:value-grow-uniform}
 For any finite set of $\CL$-formulas $\Delta(x_1,\dotsc,x_k; y)$ with
  parameters from $\CM_0$ there are $n\in
  \NN$ and $d\in \NN$ such that for all sufficiently  large $N$, for  any $ c\in \CM_0^{|y|}$, any linearly $n$-growing sequence 
  $\vec a= ( a_1,a_2,\dotsc, a_N)$ of elements from $\CM_0$   contains a
  $\Delta(x_1,\dotsc, x_k; c)$-indiscernible
  subsequence of length at least $\frac{N}{d}$. 
\end{cor}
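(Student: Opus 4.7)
The plan is to mimic the derivation of Corollary \ref{cor: o-min indisc with params} from Lemmas \ref{lem: o-min indisc without params} and \ref{lem: finitary shrinking} in the $o$-minimal setting, now using Lemma \ref{thm:value-grow} as the parameter-free input. The key point is that $\CM_0$ is NIP (every $P$-minimal field is NIP), so Lemma \ref{lem: finitary shrinking} is available.

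First, I would absorb the parameter $y$ into a parameter-free finite set of formulas. For each $\varphi(x_1,\dotsc,x_k;y)\in \Delta$, apply Lemma \ref{lem: finitary shrinking} to obtain integers $k_\varphi, l_\varphi$ and a finite set $\Delta_\varphi$ of $\CL$-formulas in the variables $x_1,\dotsc,x_{l_\varphi}$ (no $y$) such that every $\Delta_\varphi$-indiscernible sequence of length $N'$ can be split, for each parameter $c\in \CM_0^{|y|}$, into at most $k_\varphi$ intervals of $\varphi(x_1,\dotsc,x_k;c)$-indiscernibility, yielding one such interval of length at least $(N'-(k_\varphi+1))/k_\varphi$. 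Set $\Delta' := \bigcup_{\varphi\in\Delta} \Delta_\varphi$ and $k := \max_{\varphi\in\Delta} k_\varphi$. Note that $\Delta'$ is a parameter-free finite set of $\CL$-formulas.

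Next, apply Lemma \ref{thm:value-grow} to $\Delta'$ to obtain $n, d_0\in\NN$ such that every linearly $n$-growing sequence in $\CM_0$ of length $N$ contains a $\Delta'$-indiscernible subsequence of length at least $N/d_0$. Fix such $n$, $d_0$. Given now an arbitrary parameter $c\in \CM_0^{|y|}$, apply Lemma \ref{lem: finitary shrinking} successively once for each $\varphi\in\Delta$ to the $\Delta'$-indiscernible subsequence: after $|\Delta|$ such reductions (each preserves $\Delta_\varphi$-indiscernibility for the remaining $\varphi$'s since the property passes to subsequences) we obtain a subsequence that is $\Delta(x_1,\dotsc,x_k;c)$-indiscernible and whose length is at least $N/(d_0 \cdot (k+1)^{|\Delta|})$ for $N$ large enough.

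Take $d := d_0\cdot(k+1)^{|\Delta|}$. The only nontrivial ingredient is NIP-ness of $\CM_0$, invoked via Lemma \ref{lem: finitary shrinking}; everything else is a straightforward combination, so I do not expect a conceptual obstacle. The mildly delicate bookkeeping is checking that each successive application of Lemma \ref{lem: finitary shrinking} is legitimate, i.e., that the subsequence extracted for $\varphi_i$ remains $\Delta_{\varphi_j}$-indiscernible for $j>i$; but this is immediate since $\Delta'$-indiscernibility is inherited by subsequences.
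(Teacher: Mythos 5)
Your proposal is correct and follows essentially the paper's intended route: the paper explicitly states that Corollary \ref{thm:value-grow-uniform} is obtained "as in the $o$-minimal case, combining Lemma \ref{lem: finitary shrinking} with Lemma \ref{thm:value-grow}," and you implement exactly that. You also correctly handle the one genuine difference from the $o$-minimal setting: Lemma \ref{lem: o-min indisc without params} makes the \emph{whole} $h$-growing sequence $\Delta'$-indiscernible, whereas Lemma \ref{thm:value-grow} only yields a $\Delta'$-indiscernible subsequence of proportional length (since a linearly $n$-growing sequence can mix elements of different $\emptyset$-types), so an initial extraction step is needed before invoking the shrinking lemma. The only deviation from the paper's proof of the $o$-minimal analogue (Corollary \ref{cor: o-min indisc with params}) is cosmetic: the paper applies Lemma \ref{lem: finitary shrinking} to all $\varphi \in \Delta$ \emph{simultaneously}, pooling the cut points to get a single long interval and a multiplicative loss of about $k|\Delta|$, whereas you apply it \emph{sequentially}, one $\varphi$ at a time, incurring an exponential-in-$|\Delta|$ loss $(k+1)^{|\Delta|}$. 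Since $|\Delta|$ and $k$ are fixed constants determined by $\Delta$, both yield a valid $d \in \NN$; the paper's one-shot version is tighter but the qualitative conclusion is identical.
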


\medskip

It remains to establish an analog of Fact \ref{prop:comb} in the $p$-adic case, demonstrating that there are ``enough'' linearly $n$-growing sequences.

For elements $\alpha \in \CM_0$ and $r$ in the value group of $\CM_0$, we will denote by
$B(\alpha,r)$ the closed ball in $\CM_0$ of (valuational) radius $r$ centered at $\alpha$,
i.e. 
\[ B(\alpha,r)=\{ a\in \CM_0 \colon v(a-\alpha)\geq r \}. \] 

\begin{rem}
Since the value group $\Gamma_{\CM_0} = \mathbb{Z}$ is discrete by assumption, and the residue field is $\mathbb{F}_q$ with $q=p^t$ for some prime $p$ and $t \in \mathbb{N}$ by Fact \ref{fac: P-minimal fields}, it is easy to see that every ball $B(\alpha,r)$ is given by a disjoint union of $q$ balls $B(\alpha_i, r+1)$, $i=0, \ldots, q-1$ with $\alpha_i = \alpha + i\beta $, where $\beta \in \CM_0$ is arbitrary with $v(\beta) = r$.
\end{rem}

\begin{lem}
  \label{lem:comb1}
Let $A \subseteq \CM_0$ be a finite non-empty set with $|A|\geq 2$
and $A\subseteq
B(\alpha,r)$ for some $\alpha\in \CM_0$ and $r\in \ZZ$. Then there is
$\alpha'\in B(\alpha,r)$ and $r' >r$ such that $ \frac{1}{q}|A|\leq  |A \cap B(\alpha',r')|<|A|$.
\end{lem}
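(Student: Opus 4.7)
The plan is a straightforward pigeonhole argument based on iteratively refining the ball containing $A$ using the preceding Remark (which describes how any ball $B(\gamma,s)$ decomposes into $q$ disjoint sub-balls of radius $s+1$, since $\Gamma_{\CM_0}=\mathbb{Z}$ is discrete and the residue field has size $q$).

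First, I would set up a descent. Starting from $\gamma_0 := \alpha$ and $s_0 := r$, suppose we have already constructed $\gamma_i \in B(\alpha,r)$ and $s_i \geq r$ such that $A \subseteq B(\gamma_i, s_i)$. Split $B(\gamma_i, s_i)$ into the $q$ disjoint sub-balls $B(\gamma_i + j\beta_i, s_i+1)$ for $j=0,\dotsc,q-1$, where $\beta_i \in \CM_0$ satisfies $v(\beta_i) = s_i$. If $A$ meets more than one of these sub-balls, stop. Otherwise, all of $A$ lies in a single sub-ball; set $\gamma_{i+1}$ to be its center (which still lies in $B(\alpha,r)$) and $s_{i+1} := s_i + 1$, then repeat.

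Next, I would verify termination. Since $|A|\geq 2$, pick distinct $a,b \in A$ and let $m := v(a-b) \in \mathbb{Z}$, which is finite. If the process has not stopped by step $i$, then $a,b$ both lie in a common ball of radius $s_i+1$, so $v(a-b) \geq s_i+1$, forcing $s_i \leq m-1$. Hence the process must stop at some step $i^* \leq m - r + 1$. At that step, $A$ splits non-trivially among the $q$ sub-balls of radius $r' := s_{i^*}+1 \geq r+1 > r$, so by pigeonhole at least one sub-ball $B(\alpha',r')$ satisfies $|A \cap B(\alpha',r')| \geq \lceil |A|/q \rceil \geq |A|/q$, while $|A \cap B(\alpha',r')| < |A|$ because at least one other sub-ball of the partition also intersects $A$ non-trivially. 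Since $B(\alpha',r') \subseteq B(\gamma_{i^*}, s_{i^*}) \subseteq \dotsb \subseteq B(\alpha,r)$, we have $\alpha' \in B(\alpha,r)$, completing the verification.

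There is no real obstacle here; the only point to watch is the strict inequality $r' > r$, which is automatic because we perform at least one splitting step (the assumption $|A|\geq 2$ guarantees that we cannot stay at radius $r$ forever once we refine).
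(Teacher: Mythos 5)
Your proposal is correct and takes essentially the same approach as the paper: the paper directly picks the maximal radius $r_1$ of a ball (centered in $B(\alpha,r)$) containing $A$ and splits it once, while you reach the same radius by iteratively refining the ball until $A$ first straddles two sub-balls, which is the same step phrased constructively. Your termination argument via $v(a-b)$ for distinct $a,b\in A$ makes explicit the boundedness that the paper's appeal to ``maximal $r_1$'' implicitly uses.
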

\begin{proof}
  Let $r_1\in \ZZ$ be maximal such that some ball $B(\alpha_1,r_1)$ with $\alpha_1\in B(\alpha,r)$
  contains $A$ (so $r_1 \geq r$). As remarked above, the ball $B(\alpha_1,r_1)$ is the union of $q$ balls of
  radius $r'=r_1+1$. Hence for at least one of these $q$ balls, say $B(\alpha',r')$ with $\alpha'\in B(\alpha_1,r_1)$, we have $ \frac{1}{q}|A|\leq  |A \cap B(\alpha',r')|<|A|$ (the last inequality is by maximality of $r_1$).
\end{proof}

\begin{prop}\label{prop:diff-seq} Let $k\in \NN$ be positive. For every finite $A\subseteq \CM_0$ with $|A|\geq 2q^{k-1}$
  there is $\alpha\in \CM_0$ and elements $a_1,\dotsc a_k \in A$ such that
  the valuations of $\alpha-a_i,i=1,\dotsc,k$, are pairwise distinct.
\end{prop}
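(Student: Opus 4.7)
The plan is to iterate Lemma \ref{lem:comb1} to build a strictly nested chain of balls $B_0 \supsetneq B_1 \supsetneq \dotsb \supsetneq B_{k-1}$ whose intersections with $A$ shrink by at most a factor of $q$ per step, and then to pick the witnesses $a_1, \dotsc, a_k$ so that $a_i$ lies in the $(i-1)$-st layer but not in the $i$-th, with $\alpha$ placed in the innermost layer. The strong triangle inequality, applied one layer at a time, then separates the valuations $v(\alpha - a_i)$.

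Setting up the chain. Put $A_0 := A$ and let $B_0 = B(\alpha_0, r_0)$ be any ball containing $A$ (one exists since $A$ is finite). For $i = 1, \dotsc, k-1$, apply Lemma \ref{lem:comb1} to $A_{i-1} \subseteq B_{i-1}$ to obtain $B_i = B(\alpha_i, r_i) \subsetneq B_{i-1}$ with $A_i := A \cap B_i$ satisfying $\tfrac{1}{q}|A_{i-1}| \leq |A_i| < |A_{i-1}|$. An easy induction using $|A| \geq 2q^{k-1}$ gives $|A_i| \geq 2q^{k-1-i}$, so the hypothesis $|A_{i-1}| \geq 2$ of the lemma is met at each step and $|A_{k-1}| \geq 2$ at the end. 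In an ultrametric space $B_i \subsetneq B_{i-1}$ forces $r_i > r_{i-1}$, so $r_0 < r_1 < \dotsb < r_{k-1}$. Now pick $a_k \in A_{k-1}$ arbitrarily, set $\alpha := a_k$, and for each $1 \leq i \leq k-1$ choose $a_i \in A_{i-1} \setminus A_i$ (nonempty by strict decrease).

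Separating the valuations. Fix $i \in \{1, \dotsc, k-1\}$. Since $a_k \in A_{k-1} \subseteq B_i$ we have $v(a_k - \alpha_i) \geq r_i$, whereas $a_i \notin B_i$ gives $v(a_i - \alpha_i) < r_i$; the strong triangle inequality then yields
\begin{align*}
v(\alpha - a_i) \;=\; v(a_k - a_i) \;=\; v(a_i - \alpha_i) \;<\; r_i.
\end{align*}
When $i \geq 2$, the same reasoning one level out (using $a_k, a_i \in B_{i-1}$) gives $v(\alpha - a_i) \geq r_{i-1}$. Chaining these bounds,
\begin{align*}
v(\alpha - a_1) \,<\, r_1 \,\leq\, v(\alpha - a_2) \,<\, r_2 \,\leq\, \dotsb \,\leq\, v(\alpha - a_{k-1}) \,<\, r_{k-1} \,<\, v(\alpha - a_k) = \infty,
\end{align*}
so the $k$ valuations are pairwise distinct. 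No serious obstacle is anticipated: the only delicate point is the choice $\alpha := a_k$ inside the innermost ball, which is exactly what makes every layer of the nested chain contribute a strict inequality in the final chain of valuations.
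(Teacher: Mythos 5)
Your overall strategy --- iterating Lemma~\ref{lem:comb1} to build a strictly nested chain of balls whose intersections with $A$ shrink by at most a factor of $q$ per step, and then reading off witnesses from the successive layers --- is the same as the paper's, and your ultrametric bookkeeping (the strict inequalities $r_0 < r_1 < \cdots < r_{k-1}$ and the estimates $r_{i-1} \leq v(\alpha - a_i) < r_i$) is correct. But the way you close the construction introduces a real, if small, defect: by setting $\alpha := a_k \in A_{k-1}$ you force $\alpha - a_k = 0$, so one of the $k$ ``valuations'' you produce is $v(0) = \infty$, not a genuine element of the value group $\mathbb{Z}$. While this may appear to satisfy the literal wording ``pairwise distinct,'' the proposition is invoked in the proof of Proposition~\ref{prop:main p-adic} with the understanding that all $k$ valuations are finite: they must be treated as real numbers for the Erd\H{o}s--Szekeres step (Fact~\ref{fac: Erdos-Szekeres}), and the differences $a_i - \alpha$ are subsequently inverted via $x \mapsto x^{-1}$, both of which break if one of the differences is zero.

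The paper avoids this by running the ball-splitting argument for one more step, constructing $A_0 \supsetneq A_1 \supsetneq \cdots \supsetneq A_k$ together with centers $\alpha_1, \dotsc, \alpha_k$ and radii $r_1 < \cdots < r_k$, then taking $\alpha := \alpha_k$ (the center of the innermost ball, which need not lie in $A$) and choosing $a_i \in A_{i-1} \setminus A_i$ for every $i = 1, \dotsc, k$. Since each $a_i$ lies in $B(\alpha_k, r_{i-1})$ but not in $B(\alpha_k, r_i)$, one gets $r_{i-1} \leq v(\alpha - a_i) < r_i$, so all $k$ differences are nonzero with distinct finite valuations. The hypothesis $|A| \geq 2q^{k-1}$ is exactly what guarantees $|A_{k-1}| \geq 2$, so that the $k$-th application of Lemma~\ref{lem:comb1} is legitimate; your version used only $k-1$ applications, leaving this slack unused. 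Your argument is repaired simply by running that extra step and taking $\alpha$ to be the final ball center rather than an element of $A$.
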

\begin{proof} Let $A\subseteq \CM_0$ be a finite set with $|A|\geq
  2q^{k-1}$. We set $A_0=A$, and also choose $\alpha_0\in \CM_0$ and
  $r_0\in \ZZ$ so that $A\subseteq B(\alpha_0,r_0)$. 
 
Using Lemma \ref{lem:comb1}, by  induction on $i=1,\dotsc,k$ we
construct  finite  sets $A_0
\supsetneqq A_1\supsetneqq \dotsb \supsetneqq A_k$, elements
$\alpha_1,\dotsc,\alpha_k\in \CM_0$ and integers $r_1< r_2 <\dotsc < r_k$
such that:  
\begin{itemize}
\item $A_i=B(\alpha_i,r_i)\cap A$;
\item $|A_i|\geq \frac{1}{q^i}|A|$;
\item $\alpha_{i} \in B(\alpha_{i-1}, r_{i-1})$ for $i=1,\dotsc, k$.
\end{itemize}

We take $\alpha=\alpha_k$, and for $i=1,\dotsc,k$ we let $a_i\in
A_{i-1}\setminus A_i$ be arbitrary. Then $a_i  \in B(\alpha,r_{i-1}) \setminus
B(\alpha,r_i)$, hence $  r_{i-1} \leq v(\alpha-a_i) < r_i$.
\end{proof}

We will use the following combinatorial facts.

\begin{fact}(Erd\H{o}s--Szekeres Theorem  \cite{erdios1935combinatorial})\label{fac: Erdos-Szekeres}
For any $r, s \in \mathbb{N}$, any sequence of pairwise distinct real numbers of length at least $(r-1)(s-1)+1$ contains an increasing subsequence of length $r$ or a decreasing subsequence of length $s$.
\end{fact}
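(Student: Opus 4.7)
The plan is to use the classical Seidenberg-style pigeonhole argument. Let $(a_1, \ldots, a_N)$ be a sequence of pairwise distinct real numbers with $N \geq (r-1)(s-1)+1$. For each index $i \in \{1, \ldots, N\}$, I would define $f(i)$ to be the length of the longest strictly increasing subsequence of $(a_1, \ldots, a_N)$ ending at position $i$, and $g(i)$ to be the length of the longest strictly decreasing subsequence ending at position $i$. Both $f(i)$ and $g(i)$ are at least $1$, witnessed by the singleton $(a_i)$.

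The central observation is that the assignment $i \mapsto (f(i), g(i))$ is injective. Indeed, for $i < j$, since the $a_l$ are pairwise distinct we have either $a_i < a_j$ or $a_i > a_j$. In the first case, appending $a_j$ to any maximal strictly increasing subsequence ending at $a_i$ produces a strictly longer one ending at $a_j$, so $f(j) \geq f(i)+1$; in the second case the symmetric argument gives $g(j) \geq g(i)+1$. Either way, $(f(i), g(i)) \neq (f(j), g(j))$.

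Now suppose for contradiction that no increasing subsequence of length $r$ and no decreasing subsequence of length $s$ exists. Then $1 \leq f(i) \leq r-1$ and $1 \leq g(i) \leq s-1$ for every $i \in \{1, \ldots, N\}$, so the map $i \mapsto (f(i), g(i))$ takes values in a set of size at most $(r-1)(s-1) < N$, contradicting injectivity. Hence a monotone subsequence of the required length must exist. The only mild subtlety in this plan is verifying the injectivity of the pair map, which reduces to the short case analysis above using the pairwise distinctness hypothesis; no further obstacles arise.
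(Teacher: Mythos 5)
Your proof is correct; it is the classical Seidenberg pigeonhole argument for Erd\H{o}s--Szekeres. Note, however, that the paper does not prove this statement at all: it is quoted as Fact~\ref{fac: Erdos-Szekeres} with a citation to the original Erd\H{o}s--Szekeres paper and used as a black box (in Proposition~\ref{prop:main p-adic} and Proposition~\ref{prop: sym EH implies EH}). So there is no in-paper proof to compare against, but your argument is a complete and standard proof of the cited fact, and the injectivity claim for the map $i \mapsto (f(i),g(i))$ is handled exactly as it should be.
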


\begin{fact}\cite[Lemma 4.1]{bukh2014erdHos}\label{fac: doubling subsequence}
Given $n \in \mathbb{N}$, every strictly increasing sequence of real numbers of length $4^n$ contains a subsequence $(b_1, \ldots, b_n)$ of length $n$ such that either $b_2 - b_1 \geq 2$ and $b_{i+1} - b_1 \geq 2 (b_i - b_1)$ for all $i = 2, \ldots, n-1$, or $b_n - b_{n-1} \geq 2$ and $b_{n} - b_{n - (i+1)} \geq  2( b_n - b_{n - i})$ for all $i = 1, \ldots, n-2$.
\end{fact}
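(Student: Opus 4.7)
I would proceed by induction on $n$. The base case $n=1$ is vacuous (both conditions (a) and (b) impose no constraints, as their ranges of $i$ are empty), so any single element of the sequence works. For the inductive step, I want to show that every strictly increasing sequence $a_1 < a_2 < \dotsb < a_N$ of length $N = 4^n$ contains a subsequence of length $n$ satisfying one of the two doubling conditions, assuming the analogous statement holds at level $n-1$.

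The key idea is a greedy left-doubling procedure. I would set $b_1 := a_1$, and then iteratively let $b_{j+1}$ be the element $a_k$ with the smallest index $k$ (greater than the index of $b_j$) satisfying $a_k - b_1 \geq 2\max(b_j - b_1,\, 1)$. If this process produces a subsequence of length at least $n$, then by construction $b_2 - b_1 \geq 2$ and $b_{i+1} - b_1 \geq 2(b_i - b_1)$ for all $i \geq 2$, which is exactly condition (a), and we are done. Otherwise the process terminates after producing some $b_1 < \dotsb < b_m$ with $m < n$, and every element of $\vec a$ following $b_m$ lies in the interval $[b_m,\, b_1 + 2(b_m - b_1))$.

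In this failure case, the $N = 4^n$ elements of $\vec a$ are distributed among at most $m \leq n$ ``shells'' carved out by the greedy picks, together with the final clustered interval; by pigeonhole some shell, say the one between $b_{j}$ and $b_{j+1}$ (or the final cluster beyond $b_m$), contains at least $4^n / n$ elements of $\vec a$, and for $n$ sufficiently large this is $\geq 4^{n-1}$ (small $n$ can be handled by direct inspection). Applying the inductive hypothesis to this sub-sequence of length $4^{n-1}$ produces a length-$(n-1)$ subsequence inside the shell satisfying either (a) or (b). I then prepend $b_j$ (or $b_1$) or append $b_{j+1}$ to this inductively obtained subsequence to produce a length-$n$ subsequence of $\vec a$; the greedy defining inequality $b_{j+1} - b_1 \geq 2(b_j - b_1)$ ensures that the appended endpoint dominates all internal distances and thereby restores whichever of (a) or (b) we are aiming for.

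The main obstacle, and the place requiring the most care, is verifying that this ``append/prepend to an inductive subsequence'' step preserves the relevant doubling condition in every combination of cases (e.g.\ an internal right-doubling subsequence combined with an appended right endpoint, versus an internal left-doubling subsequence combined with a prepended left endpoint). This is where the specific factor $4$ in $4^n$ is crucial: one factor of $2$ is consumed by pigeonhole over the $O(n)$ greedy shells (since $4^n/n \geq 4^{n-1}$ for large $n$), and the other factor of $2$ by the geometric growth demanded by the doubling condition. Getting both factors to cooperate, and handling the boundary interaction between the terminal greedy shell and the inductively produced subsequence, is the technical heart of the argument.
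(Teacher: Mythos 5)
Your pigeonhole step is arithmetically wrong, and in fact you state the inequality backwards: $4^n/n \geq 4^{n-1}$ holds if and only if $n \leq 4$, so the claim "for $n$ sufficiently large this is $\geq 4^{n-1}$" is precisely reversed. When the greedy halts after $m < n$ picks, the $4^n$ elements sit in $O(n)$ shells, and pigeonhole only guarantees one shell of size $\geq 4^n/n$, which for $n \geq 5$ is strictly \emph{less} than $4^{n-1}$; the inductive hypothesis therefore cannot be invoked. The budget of one factor of $4$ per level of the recursion is tight, and a pigeonhole over $n$ shells wastes a factor of $n$, which is far too much. The direct-doubling part of your argument (if the greedy already returns $n$ elements, condition (a) holds by construction) is fine; it is the fallback branch that does not close.

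Even if the count were right, the gluing step you defer to as "the technical heart" fails in half the cases, and this is exactly the part a correct proof has to confront. Suppose the shell between $b_j$ and $b_{j+1}$ supplies, by induction, a subsequence $c_1 < \dotsb < c_{n-1}$ with $b_j \leq c_1$ and $c_{n-1} < b_{j+1}$. If $(c_i)$ is right-anchored (condition (b)), prepending $b_1$ does work: one checks $c_1 - b_1 \geq b_j - b_1 > c_{n-1} - c_1$ using the greedy inequality $b_{j+1} - b_1 \geq 2(b_j - b_1)$, and the remaining constraints of (b) are inherited. But if $(c_i)$ is left-anchored (condition (a)), neither extension succeeds: appending $b_{j+1}$ requires $b_{j+1} - c_{n-1} \geq c_{n-1} - c_1$, which fails when $c_{n-1}$ lies near the top of the shell while $c_{n-1} - c_1$ is comparable to the shell width; and prepending $b_1$ requires $c_{i+1} - b_1 \geq 2(c_i - b_1)$, which is strictly stronger than the inductively given $c_{i+1} - c_1 \geq 2(c_i - c_1)$ whenever $b_1 < c_1$, because changing the anchor point from $c_1$ to $b_1$ adds an unfavourable constant to both sides of a doubling inequality. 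Since you cannot control which of (a) or (b) the induction returns, the recursion does not close. A correct argument must exploit the two-sided alternative in the conclusion more symmetrically (e.g.\ arranging matters so that the inductive outcome is always the one that glues) and must spend the factor $4$ on a bounded number of case splits per level rather than on an $n$-fold pigeonhole.
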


\begin{prop}
  \label{prop:main p-adic} 
There are finitely many functions $F_1(x,\bar
y),\dotsc, F_s(x,\bar y)$ definable with parameters from $\CM_0$ such that for any $n\in \NN$ there is a
constant $C > 0$ such that for   any $k\in
\NN$ the following holds. For any $K\geq 2^{2^{Ck}}$ and any sequence $\vec a= ( a_1,\dotsc, a_K )$ in $\CM_0$ there are a
linearly $n$-growing sequence $\vec b= ( b_1,\dotsc, b_k )$ of
elements  in $\CM_0$, $\bar c\in
\CM_0^{|\bar y|}$ and $i\in \{ 1,\dotsc,s \}$ such that one of the sequences 
\[F_i(\vec b, \bar c) := \left(  F_i(b_1, \bar c), F_i(b_2,\bar c),\dotsc,       F_i(b_k,\bar
c) \right) \] 
or 
\[F_i(\, \cev b, \bar c) := \left( F_i(b_k,\bar c), F_i(b_{k-1},\bar c),\dotsc,       F_i(b_1,\bar
c)\right) \] 
is a subsequence of $\vec a$.

\end{prop}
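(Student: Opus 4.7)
The plan is to prove the proposition with $s = 2$ and the two ring-term functions $F_1(x, y, z) = y + zx$ and $F_2(x, y, z) = y + zx^{-1}$, so $\bar y = (y, z)$ has length two. All parameter tuples $\bar c$ produced below will be of the form $(\alpha, a_i - \alpha)$ with $\alpha, a_i \in \CM_0$, and hence lie in $\CM_0^{2}$.

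Fix $n \in \NN$ and let $K = 2^{2^{Ck}}$, with $C$ to be chosen in terms of $n$ and $q$. Given $\vec a = (a_1,\dotsc,a_K)$ in $\CM_0$, I would first invoke Proposition \ref{prop:diff-seq}: since $K \geq 2q^{K'-1}$ for $K' = \Theta(2^{Ck})$, this produces $\alpha \in \CM_0$ and, after reordering the $K'$ selected elements by their position in $\vec a$, a subsequence $a_{i_1},\dotsc,a_{i_{K'}}$ whose valuations $\beta_j := v(\alpha - a_{i_j})$ are pairwise distinct integers. Applying Erd\H{o}s--Szekeres (Fact \ref{fac: Erdos-Szekeres}) to $(\beta_j)_{j=1}^{K'}$ extracts a strictly monotone sub-subsequence of length $L \geq \sqrt{K'} = \Theta(2^{Ck/2})$. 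Then Fact \ref{fac: doubling subsequence}, applied to this monotone sequence (or to its negation in the decreasing case), yields a ``doubling'' sub-subsequence $a_{i_{j_1}}, \dotsc, a_{i_{j_{n_1}}}$ with $j_1 < \dotsb < j_{n_1}$ and $n_1 = \Theta(\log_4 L) = \Theta(Ck)$.

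The argument then splits into four cases according to the monotonicity direction of $(\beta_j)$ and to which of the two alternatives of Fact \ref{fac: doubling subsequence} applies. In each case I would select one of the endpoints $j_{\rm anc} \in \{j_1, j_{n_1}\}$ as an ``anchor'', set $\mu := a_{i_{j_{\rm anc}}} - \alpha \in \CM_0$, take $\bar c := (\alpha, \mu)$, and define $\vec b$ of length $n_1 - 1$ by running through the $n_1 - 1$ non-anchor elements $a_{i_{j_t}}$ in an appropriate order and taking either $b_r := (a_{i_{j_t}} - \alpha)/\mu$ (when the case will be recovered by $F_1$) or $b_r := \mu/(a_{i_{j_t}} - \alpha)$ (when the case will be recovered by $F_2$). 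In each case $v(b_r)$ equals the positive difference $|\beta_{j_t} - \beta_{j_{\rm anc}}|$, and the growth condition of Fact \ref{fac: doubling subsequence} forces $v(b_1) \geq 2$ together with $v(b_{r+1}) \geq 2 v(b_r)$. In each of the four cases exactly one of the sequences $F_i(\vec b, \bar c)$ or $F_i(\cev b, \bar c)$ reproduces the chosen $a$-elements in the order they appear in $\vec a$, and is therefore a subsequence of $\vec a$.

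Finally, to upgrade from factor-$2$ growth of $(v(b_r))_r$ to the linearly $n$-growing condition $v(b_1) > n$ and $v(b_{r+1}) > n v(b_r)$, I would pass to a further subsequence of $\vec b$ by discarding an initial block of length $O(\log_2 n)$ and keeping only every $\lceil \log_2 n \rceil$-th term; under $F_i$ this remains a subsequence of $\vec a$. The final length is of order $n_1 / \log_2 n$, so choosing $C$ to be a sufficiently large constant depending on $n$ and $q$ ensures the output has length at least $k$. The main obstacle will be the case bookkeeping: pairing each of the two Erd\H{o}s--Szekeres cases with each of the two alternatives of Fact \ref{fac: doubling subsequence}, and verifying in each of the four resulting cases which choice of $F_1/F_2$ and of $\vec b/\cev b$ recovers a subsequence of $\vec a$ in its original order. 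The remaining computations are routine.
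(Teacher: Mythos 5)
Your proposal follows the same overall route as the paper — Proposition~\ref{prop:diff-seq}, then Erd\H{o}s--Szekeres, then the doubling lemma (Fact~\ref{fac: doubling subsequence}), then a logarithmic thinning to upgrade from factor-$2$ growth to the linearly $n$-growing condition — and your observation that the whole family of transformations can be folded into just two parametrized functions $F_1(x,y,z)=y+zx$ and $F_2(x,y,z)=y+zx^{-1}$ is a nice tightening of the bookkeeping in the paper.

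However, there is a genuine gap right at the first step. You invoke Proposition~\ref{prop:diff-seq} on $\vec a$ directly, arguing ``since $K\geq 2q^{K'-1}$''. But Proposition~\ref{prop:diff-seq} is a statement about \emph{sets}: its hypothesis is $|A|\geq 2q^{k-1}$, where $A\subseteq\CM_0$. The sequence $\vec a$ is not required to have distinct terms, so $|\{a_1,\dotsc,a_K\}|$ can be far smaller than $K$ (indeed $\vec a$ could be constant, in which case no $\alpha$ can make the valuations $v(\alpha-a_{i_j})$ pairwise distinct). The paper handles this by first splitting into two cases: either some value is repeated at least $\sqrt K$ times, in which case a constant map suffices, or else there are at least $\sqrt K$ pairwise distinct entries, and only then is Proposition~\ref{prop:diff-seq} applied to the resulting set of distinct elements. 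Your argument silently assumes the second case. The fix is cheap within your framework — when a value $a'$ is repeated $\geq\sqrt K$ times, take $\bar c=(a',0)$, so $F_1(x,a',0)=a'$ reproduces that repeated block from any linearly $n$-growing $\vec b$ — but this degenerate case must be stated, and the dichotomy made explicit, before Proposition~\ref{prop:diff-seq} can legitimately be used. As written, the proof does not cover all sequences $\vec a$.
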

\begin{proof} 
\newcommand{\cell}[1]{\lceil #1 \rceil}
As usual, for a real number $R$ we will denote by $\cell R$
the smallest integer $N$ satisfying $N\geq R$.

First, notice that it is sufficient to prove the proposition for
$n=2$. Indeed if $a_1, a_2,\dotsc, $ is a linearly $2$-growing sequence of length $N$,
then for a given $n$, taking $l=\cell {\log_2{n}}$, the sequence $a_l,
a_{2l},\dotsc$ is a linearly $n$-growing sequence of length at least $\frac{N}{l}$.  

\medskip 
Assume $k$ is given and  $K\geq 2^{2^{Ck}}$, where a suitable constant $C$ will be
  determined in the proof.  
Let $a_1,\dotsc a_K$ be a sequence of elements of $\CM_0$. 

\medskip
\noindent{\bf Case 1}.  The sequence $a_1,\dotsc a_K$ contains
at
least $\sqrt{K}$ equal elements.  

Let's call this repeated element $a'$. Then the conclusion of the proposition holds as we can map any linearly
$2$-growing sequence of length $\cell{\sqrt K}$ onto a subsequence $\vec a$ using
the constant map $F(x,a') := a'$.

\noindent{\bf Case 2}.  The sequence $a_1,\dotsc a_K$ does not
contain $\cell{\sqrt{K}}$ equal elements. Then it contains at least
$K_1 : =\sqrt{K}$ pairwise distinct elements.

Using  Proposition
\ref{prop:diff-seq} we can find an element $\alpha\in \CM_0$ and a
subsequence $\vec a^1= ( a^1_1,\dotsc a^1_{K_2} ) $ of $\vec a$ with 
$K_2 := \cell{ \log_q(\frac{1}{2} K_1)+1 }$ such that the valuations
$v(\alpha-a^1_i)$ are pairwise distinct for all $1 \leq i \leq K_2$. 

Thus, using the map $F(x,\alpha) = x+\alpha$ we can find a sequence 
$\vec b= ( b_1,\dotsc b_{K_2} )$ such that $F(\vec b,\alpha) = \left( F(b_1, \alpha), \ldots, F(b_{K_2}, \alpha) \right)$ is a
subsequence of $\vec a$ and  all of the valuations $v(b_i)$ are pairwise
distinct. 

By the Erd\H{o}s--Szekeres Theorem (Fact \ref{fac: Erdos-Szekeres}),  the sequence $\vec b$ contains a
subsequence $\vec b^1= ( b^1_1,\dotsc, b^1_{K_3} )$ with $K_3 := 
\cell{\sqrt{K_2}}$ such that the corresponding sequence of valuations
$( v(b^1_1),\dotsc, v(b^1_{K_3}) )$ is either increasing or
decreasing.  Using the function $F(x)=x^{-1}$ if needed, we can assume that the
sequence is increasing.

By Fact \ref{fac: doubling subsequence}, there is a subsequence 
$\vec b^2 = ( b^2_1,\dotsc, b^2_{K_4} )$ of $\vec b^1   $ with $K_4 := \cell{ \frac{1}{2} \log_2 K_3 }$ such
that either for  the sequence 
\[ \vec v=\left( v(b^2_2)-v(b^2_1), v(b^2_3)-v(b^2_1), \dotsc,
v(b^2_{K_4})-v(b^2_1) \right) \]
we have $v_1\geq 2, v_{i+1}\geq 2v_i$, or for the sequence  

\[ \vec v'=\left( v(b^2_{K_4})-v(b^2_{K_4-1}), v(b^2_{K_4})-v(b^2_{K_4-2}), \dotsc,
v(b^2_{K_4})-v(b^2_1) \right) \]
we have  $v_1\geq 2, v_{i+1}\geq 2v_i$.
  
In the first case, the sequence 
$\vec b^3= ( b^2_2/b^2_1,\dotsc, b^2_{K_4}/b^2_1 )$ is linearly
$2$-growing and can be embedded into $\vec b^2$ via the transformation
$x\mapsto b^2_1x$. 

In the second case, the sequence 
 $\vec b^4= ( b^2_{K_4}/b^2_{K_4-1},\dotsc, b^2_{K_4}/b^2_1 )$ is linearly
$2$-growing, and its reverse sequence $\cev{b}^4$  can be embedded into
$\vec b^2$ via the transformation $x\mapsto x^{-1}/ b^2_{K_4}$. 

Hence we have demonstrated that every sequence of length $K$ contains a subsequence of length $\min \{\cell{\sqrt K}, K_4 \}$ with the desired property. Going backwards through the proof we have $K_3 \leq 4^{K_4}$, $K_2 \leq K_3^2$, $K_1 \leq 2 q^{K_2-1}$ and $K \leq K_1^2$. As $q$ is fixed, an easy calculation shows that for any sufficiently large $C \in \mathbb{R}$, we have $K \leq 2^{2^{C K_4}}$ for all sufficiently large $K_4 \in \mathbb{N}$, and taking $K_4 = k$ we can conclude the result.
\end{proof}

Combining Corollary \ref{thm:value-grow-uniform} and Proposition \ref{prop:main p-adic} exactly as in the $o$-minimal case (see Theorem \ref{thm:main}), we obtain Theorem \ref{thm: BM in P-min gen}.

\section{Ramsey growth in NIP} \label{sec: Ramsey growth NIP}
In this section we consider Ramsey numbers for definable relations of higher arity.
We fix a structure $\CM$ in a language $\CL$, and by a ``formula''  we always mean an $\CL$-formula. Following the method of \cite{conlon2014ramsey} for the semialgebraic case, we obtain the following recursive bound for higher arity Ramsey numbers in arbitrary NIP structures.
\begin{thm}\label{thm: stepping down NIP}
Let $\CM$ be an NIP structure, $k\geq 3$ and $\varphi(x_1, \ldots, x_k;z)$ a formula
with $|x_1|=\dotsb=|x_k|=d$. Then, defining the formula $\psi(x_1,\dotsc,x_{k-1};z') := \varphi(x_1,\dotsc,x_{k-1}; x_k,z)$ and taking $m := R^{*}_\psi(n-1) $, for all large enough $n$ we have $$R^*_\varphi(n) \leq 2^{C m \log m}$$ for some constant $C = C(\varphi)$.
\end{thm}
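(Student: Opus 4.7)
The plan is to extract from the given sequence, by a greedy construction exploiting NIP, a subsequence $(b_1, \dotsc, b_m)$ together with a non-empty ``tail pool'' $C \subseteq M^d$ disjoint from $\{b_1,\dotsc,b_m\}$ on which $\varphi(\,\cdot\,;b)$ reduces to $\psi(\,\cdot\,;c,b)$ in the following precise sense: for all $1\le j_1<\dotsb<j_{k-1}\le m$ and all $x\in C$, the truth value of $\varphi(b_{j_1},\dotsc,b_{j_{k-1}},x;b)$ is independent of $x$. Once this invariant is attained, pick any $c\in C$; the hypothesis $R^*_\psi(n-1)\le m$ applied to $(b_1,\dotsc,b_m)$ with parameter $(c,b)$ produces a $\psi(\,\cdot\,;c,b)$-indiscernible sub-subsequence of length $n-1$, and appending $c$ yields a $\varphi(\,\cdot\,;b)$-indiscernible sequence of length $n$. (For $k$-tuples of the appended sequence ending at the appended coordinate, the conclusion is immediate from $\psi$-indiscernibility and the definition of $\psi$; for $k$-tuples lying entirely in the sub-subsequence, the invariant lets one replace the last coordinate by $c$, after which $\psi$-indiscernibility applies.)

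The construction is greedy. Set $C_0 := \{a_1,\dotsc,a_N\}$, and at step $t$ let $b_{t+1}$ be the first (in the original order) element of $C_t$. Prune $C_t\setminus\{b_{t+1}\}$ to a subset $C_{t+1}$ on which the pattern
\[
\pi_x(j_1,\dotsc,j_{k-2}) \ := \ \varphi(b_{j_1},\dotsc,b_{j_{k-2}},b_{t+1},x;b), \quad 1\le j_1<\dotsb<j_{k-2}\le t,
\]
is constant. Older constraints of the invariant (those with largest index $\le t$) are inherited from $C_t\supseteq C_{t+1}$; only the new constraints, which have $t+1$ as their largest index and are exactly what $\pi_x$ records, need to be enforced at step $t+1$.

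The essential NIP input is the polynomial bound on the pattern count. Let $\chi(y;x_1,\dotsc,x_{k-1},z) := \varphi(x_1,\dotsc,x_{k-1},y;z)$, with VC-dimension $d^\ast$. Then $\pi_x$ is precisely the $\chi$-type of $x$ over the parameter set $B_t := \{(b_{j_1},\dotsc,b_{j_{k-2}},b_{t+1},b): 1\le j_1<\dotsb<j_{k-2}\le t\}$, and $|B_t|\le t^{k-2}$. By Fact~\ref{fac: PolyTypesNIP} the number of such types is at most $C_1\,t^{(k-2)d^\ast}$ for a constant $C_1=C_1(\varphi)$, so by pigeonhole $|C_{t+1}|\ge (|C_t|-1)/(C_1 t^{(k-2)d^\ast})$. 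Iterating $m$ times and applying Stirling gives
\[
|C_m| \ \ge \ \frac{N}{C_1^m\,(m!)^{(k-2)d^\ast}} \ \ge \ \frac{N}{2^{O(m\log m)}},
\]
so $N=2^{Cm\log m}$ with $C=C(\varphi)$ sufficiently large forces $|C_m|\ge 1$ (recall $m\ge n-1\to\infty$). The main role of NIP is exactly this step: without it, the naive bound of $2^{\binom{t}{k-2}}$ patterns per step would replace the factor $2^{O(m\log m)}$ above by something doubly exponential, costing an extra tower level. Beyond the NIP pattern count, the only subtlety is bookkeeping the invariant, and in particular noting that the variable partition in the statement --- where the last coordinate of $\varphi$ is the one promoted into a parameter of $\psi$ --- matches exactly the coordinate varying freely over $C$ in our construction.
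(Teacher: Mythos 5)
Your approach is the same as the paper's: a greedy pruning construction that at each step pins down, via the NIP polynomial bound on types, the behaviour of $\varphi$ with the new element in the $(k-1)$-st slot, followed by an application of $R^{*}_\psi$ to the chosen $(b_1,\dotsc,b_m)$ and the appending of one surviving tail element. The NIP counting, the per-step pigeonhole loss of a factor polynomial in $t$, and the resulting $2^{O(m\log m)}$ budget are all as in the paper, and the role of the last coordinate as the ``new parameter'' of $\psi$ is correctly identified.

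However, the ``precise sense'' in which you state the invariant is slightly too weak for the endgame step to follow as written. You quantify $x$ only over the final pool $C=C_m$, which by construction is disjoint from $\{b_1,\dotsc,b_m\}$; but for $k$-tuples $(b_{j_1},\dotsc,b_{j_k})$ lying entirely inside the chosen subsequence, the last coordinate $b_{j_k}$ has $j_k\le m$ and hence $b_{j_k}\notin C$, so the stated invariant says nothing about the comparison $\varphi(b_{j_1},\dotsc,b_{j_{k-1}},b_{j_k};b)$ versus $\varphi(b_{j_1},\dotsc,b_{j_{k-1}},c;b)$. The fix is already latent in your construction: since $b_{j_k}$ is chosen as the first element of $C_{j_k-1}$ and the pools are nested, both $b_{j_k}$ and $c$ lie in $C_{j_{k-1}}$, so the step-$j_{k-1}$ version of the invariant gives the replacement. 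Equivalently, the invariant you should record (and what the paper's condition (1) records) is that for every $1\le j_1<\dotsb<j_{k-1}\le m$ the truth value of $\varphi(b_{j_1},\dotsc,b_{j_{k-1}},x;b)$ is constant as $x$ ranges over $\{b_j\colon j_{k-1}<j\le m\}\cup C$, not merely over $C$. With that correction, the endgame goes through exactly as you describe; everything else in the proposal is sound.
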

\begin{proof}
We are generalizing the argument from \cite[Theorem 2.2]{conlon2014ramsey}.

Let $e\in M^{|z|}$ be arbitrary, 
 $\psi(x_1,\dotsc,x_{k-1};z')=\varphi(x_1,\dotsc,x_{k-1}; x_k,z)$, $n\in \NN$ large enough (to be determined in the proof), and $m= R^{*}_\psi(n-1)$. Let $\vec a = (a_1, \ldots, a_N )$ be a sequence of elements in $M^d$  with $N \geq  2^{C m \log m}$, where $C=C(\varphi)$ is a constant to be specified later. We need to find a $\varphi(x_1,\dotsc,x_k;e)$-indiscernible subsequence of $\vec a$ of length $n$.  

Let $E\subseteq (M^d)^k$ be the $k$-ary relation on $M^d$ defined by $\varphi(x_1,\dotsc,x_k;e)$, i.e.
$E  =\{(t_1, \ldots, t_k) \in (M^{d})^{k} : \CM \models \varphi(t_1, \ldots, t_k; e)\}$.

The idea is to find  a subsequence $\vec b=(b_1,\dotsc, b_{m+1})$ of $\vec a$ such that 
for all $1\leq i_1<\dotsc < i_{k-1} \leq m$, either $(b_{i_1}, \dotsc,b_{i_{k-1}}, b_i)\in E$ for all $i_{k-1}<i\leq m+1$ or
 $(b_{i_1}, \dotsc,b_{i_{k-1}}, b_i)\not\in E$ for all $i_{k-1}<i\leq m+1$.

To build a sequence $\vec b$ as above,  
we recursively choose elements $b_r$ in $\vec a$ and also subsequences  $\vec c_r$ of $\vec a$  for $r = k-2, k-1, \ldots, m+1$ with $\vec c_{r+1} \subset \vec c_r$
 so  that the following holds.
\begin{enumerate}
\item For every $(k-1)$-subsequence  $(b_{i_1}, \ldots, b_{i_{k-1}})$ of  $(b_1, \ldots, b_{r-1} )$ with $i_1 < \ldots < i_{k-1}$, either $(b_{i_1}, \ldots, b_{i_{k-1}}, b) \in E$ for every $b \in \{ b_j : i_{k-1} < j \leq r \} \cup \vec c_r$ or $(b_{i_1}, \ldots, b_{i_{k-1}}, b) \notin E$ for every $b \in \{ b_j : i_{k-1} < j \leq r \} \cup \vec c_r$.
\item $|\vec c_r| \geq \frac{N}{C_1^r r^{C_2 r}}$, where $C_1, C_2$ are some constants depending just on $\varphi$.
\item The subsequence $(b_1,\dotsc,b_r)$ appears in $\vec a$ in front of the subsequence $\vec c_r$, i.e.
$(b_1,\dotsc,b_r)\hat\ \vec c_r$ is a subsequence of $\vec a$. 
 \end{enumerate}

We start with $r=k-2$ by taking  $(b_1, \ldots, b_{k-2}) = ( a_1, \ldots, a_{k-2} )$ and  $\vec c_{k-2} = 
(a_{k-1},\dotsc,a_N)$. 
Assume we have obtained $( b_1, \ldots, b_r )$ and $\vec c_r$ satisfying (1)--(3) above, and we define $b_{r+1}$ and $\vec c_{r+1}$ as follows. 

Let $b_{r+1}$ be the first element in $\vec c_r$ and let $\vec c_r^{\,*}$ be the sequence $\vec c_r$ with the first element removed.   Let $\theta(x_k; u)$ be the partitioned formula obtained from 
 $\varphi(x_1\ldots, x_{k-1}, x_k, z)$ by partitioning its variables into two groups $x_k$ and $u = x_1, \ldots, x_{k-1},z$.
As the formula $\theta$ is NIP, by Fact \ref{fac: PolyTypesNIP} the number of complete $\theta(x_k; u)$-types over an arbitrary finite set $D\subseteq M^{|z|+(k-1)d}$ of parameters is bounded by $C_3|D|^{C_4}$ for some constants $C_3,C_4$ depending just on $\varphi$.

Let $D=\{ (b_{i_1},\dotsc,b_{i_{k-1}},e) \colon 1\leq i_1 <\dotsc <i_{k-1}\leq r+1\}$. 
Obviously $|D|\leq (r+1)^{k-1}.$

It follows by the pigeonhole principle that there is some complete $\theta$-type $p(x_k) \in S_\theta(D)$ such that
the number of elements in $\vec c_r^{\,*}$ realizing $p(x)$ is at least 
$\frac{|{\vec c}_r^{\,*}|}{C_3|D|^{C_4}}\geq  \frac{|{\vec c}_r|-1}{C_3|(r+1)|^{(k-1)C_4}}\geq 
\frac{|{\vec c}_r|}{2C_3|(r+1)|^{(k-1)C_4}}$, provided $|\vec c_r| \geq 2$.  

We take $\vec c_{r+1}$ to be the subsequence of elements of $\vec c_r$ realizing $p$. For $C_1=2C_3$ and $C_2=(k-1)C_4$ (again, both $C_1$ and $C_2$ only depend on $\varphi$), using the inductive lower bound for the length  of $\vec c_r$ and calculating, we obtain $|\vec c_{r+1}| \geq \frac{N}{C_1^{(r+1)}( r+1) ^ {C_2 (r+1)}}$, i.e. (2) is satisfied.

Now for any subsequence  $(b_{i_1}, \ldots, b_{i_{k-1}})$ of  $( b_1, \ldots, b_{r+1} )$, we have that either $(b_{i_1}, \ldots, b_{i_{k-1}},b) \in E$ for all $b \in \vec c_{r+1}$ or $(b_{i_1}, \ldots, b_{i_{k-1}},b) \notin E$ for all $b \in \vec c_{r+1}$. Together with the inductive assumption this implies that (1) is satisfied by $(b_1, \ldots, b_{r+1})$ and $\vec c_{r+1}$.
Finally, (3) is clear from the construction.

For $\vec c_m$ to be non-empty (in which case we would have constructed our  sequence $(b_1, \ldots, b_{m+1})$), by (2) we need  $\frac{N}{C_1^{m} m ^ {C_2 m}}\geq 1$, i.e. 
$N\geq C_1^mm^{C_2m}$.  It is not hard to find a constant $C$, depending on $C_1, C_2$ only, so that the condition $N\geq 2^{Cm\log m}$ is sufficient.

\end{proof}

\begin{rem}
The constant $C_4$ in the above proof depends just on the VC-density of $\varphi$ (with a corresponding partition of the variables). By Fact \ref{fac: bounds on UDTFS}, in the case of $o$-minimal theories we can take $C_4 = d$.
\end{rem}

By a repeated application of Theorem \ref{thm: stepping down NIP} we have an improved bound on Ramsey numbers for relations of higher arities.

\begin{thm}\label{thm: repeated tower NIP}
Let $\mathcal{M}$ be an NIP structure, and assume that for all $\psi(x_1,x_2;z)$  we have $R^*_\varphi(n) \leq n^c$ for some $c = c(\psi)$ and all $n$ large enough. Then for all $\varphi(x_1,\dotsc,x_k;z')$ we have $R^*_\varphi(n) \leq \twr_{k-1}(n^c)$ for some $c = c(\varphi)$ and all $n$ large enough.
\end{thm}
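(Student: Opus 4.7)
The plan is to prove Theorem \ref{thm: repeated tower NIP} by induction on the arity $k$, with Theorem \ref{thm: stepping down NIP} doing all of the real work in the inductive step. The base case $k=2$ is precisely the hypothesis of the theorem (with $\twr_1(n^c) = n^c$). For the inductive step, assume the bound $R^*_\psi(n) \leq \twr_{k-2}(n^{c'})$ holds for all $(k-1)$-ary formulas $\psi$, with $c' = c'(\psi)$ and all sufficiently large $n$. Given $\varphi(x_1,\dotsc,x_k;z)$ with $|x_1|=\dotsb=|x_k|=d$, form the $(k-1)$-ary formula $\psi(x_1,\dotsc,x_{k-1}; z') := \varphi(x_1,\dotsc,x_{k-1}; x_k, z)$ with $z' = (x_k, z)$. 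Note that once we fix $\varphi$, the formula $\psi$ is also fixed, so the induction hypothesis supplies a constant $c' = c'(\varphi)$ with $R^*_\psi(n-1) \leq \twr_{k-2}((n-1)^{c'}) \leq \twr_{k-2}(n^{c'})$ for large $n$. Applying Theorem \ref{thm: stepping down NIP} then yields $R^*_\varphi(n) \leq 2^{Cm\log m}$ with $m := R^*_\psi(n-1)$ and $C = C(\varphi)$.

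It remains to verify that this bound is absorbed by $\twr_{k-1}(n^c)$ for an appropriate $c = c(\varphi)$. This is elementary tower arithmetic. For $k = 3$, one has $m \leq n^{c'}$, so $2^{Cm\log m} \leq 2^{n^{c''}}$ for some $c'' = c''(\varphi)$, which is bounded by $\twr_2(n^c)$. For $k \geq 4$, writing $m \leq \twr_{k-2}(n^{c'})$ we get $\log m \leq \twr_{k-3}(n^{c'})$, hence
\[
Cm\log m \;\leq\; C \cdot \twr_{k-2}(n^{c'}) \cdot \twr_{k-3}(n^{c'}) \;\leq\; \twr_{k-2}(n^c)
\]
for some $c = c(\varphi)$ and all sufficiently large $n$, since the tower function eventually dominates any multiplicative factor upon a small increase in the argument. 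Therefore
\[
R^*_\varphi(n) \;\leq\; 2^{Cm\log m} \;\leq\; 2^{\twr_{k-2}(n^c)} \;=\; \twr_{k-1}(n^c),
\]
completing the inductive step.

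There is no genuine obstacle, as Theorem \ref{thm: stepping down NIP} has been designed exactly so that this induction goes through; the only care required is in the arithmetic of iterated exponentials and in keeping track that each step introduces only a finite worsening of the constant $c$, depending ultimately just on $\varphi$ (through the constants $C$ from Theorem \ref{thm: stepping down NIP} and $c'$ from the inductive hypothesis applied to $\psi$).
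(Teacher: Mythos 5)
Your proof is correct and takes the same route the paper intends: the paper simply states that Theorem \ref{thm: repeated tower NIP} follows ``by a repeated application of Theorem \ref{thm: stepping down NIP},'' and your induction on $k$, with $\psi(x_1,\dotsc,x_{k-1};x_k,z)$ as the step-down formula and the tower arithmetic filled in, is exactly that repeated application made explicit.
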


Now we discuss the connection of the assumption of Theorem \ref{thm: repeated tower NIP} with the (strong) Erd\H{o}s-Hajnal property for graphs definable in $\CM$.

\begin{defn}\cite{FoxPach} \label{def: strongEH}
\begin{enumerate}
	\item Let $\mathcal{G}$ be a class of finite graphs (i.e.~the edge relation is assumed to be symmetric and irreflexive). 
We say that $\mathcal{G}$ has the \emph{Erd\H{o}s-Hajnal property}, or the \emph{EH property}, if
there is $\delta>0$ such that 
every $G=(V,E)\in \mathcal{G}$ has a homogeneous subset $V_0$ of size $|V_0| \geq |V|^\delta$ (i.e.~either $(a,b) \in E$ for all $a \neq b \in V_0$, or $(a,b) \notin E$ for all $a \neq b \in V_0$).

\item Let $\mathcal{G}$ be a class of finite binary relations, i.e.~ every member of $\CG$ is of the form $(E,V_1,V_2)$, where $E \subseteq V_1 \times V_2$ with $V_1,V_2$ finite sets (not necessarily disjoint). We say that $\CG$ has the \emph{strong EH property} if
there is $\delta>0$ such that 
for every $(E, V_1, V_2) \in \mathcal{G}$ there are subsets $V'_i \subseteq V_i$ with
$|V'_i| \geq \delta|V_i|$ for $i=1,2$ such that the pair of sets $V'_1, V'_2$ is homogeneous (i.e.~either $V'_1 \times V'_2 \subseteq E$ or $V'_1 \times V'_2 \cap E = \emptyset$).
\item A family of finite graphs $\CG$ has the strong EH property if the family of finite binary relations $\{ (E,V,V) : (E,V) \in \CG \}$ has the strong EH property.

\end{enumerate}
	
\end{defn}

We recall that a famous conjecture of Erd\H{o}s and Hajnal \cite{chudnovsky2014erdos} says that for every finite graph $H$, the family of all finite graphs not containing an induced copy of $H$ has the EH  property.

  \begin{defn} \label{def: definable EH}Let $\CM$ be a first-order structure, and $\varphi(x_1, x_2; z)$ a formula with $|x_1|=|x_2| = d$. Let $\mathcal{G}_\varphi$ be the family of all finite binary relations $(E,V_1,V_2)$ with 
$V_1, V_2 \subseteq M^d$ finite and $E=(V_1 \times V_2)\cap \varphi(M,b)$ for some $b \in M^{|z|}$. Let $\CG^{\textrm{sym}}_\varphi$ be the family of all finite graphs $(V,E)$ with $V \subseteq M^d$ and $E = (V \times V) \cap \varphi(M,b) \setminus \Delta$ for some $b \in M^z$ such that $E$ is symmetric (where $\Delta = \{ (v,v) : v \in V \}$ is the diagonal).
We say that $\varphi$ satisfies the EH property (respectively  strong EH property) if the family $\mathcal{G}^{\textrm{sym}}_\varphi$ (respectively $\CG_\varphi$) does.

If this holds for all formulas $\varphi$ in $\CM$, we say that $\CM$ satisfies the (strong) EH property.
\end{defn}

\begin{rem} \label{rem: strong EH implies EH} It is shown in \cite{alon2005crossing} that if a family of finite graphs $\mathcal{G}$ has the strong EH property and is closed
  under taking induced subgraphs then it has the EH property. In particular, this applies to every family of the form $\CG^{\textrm{sym}}_\varphi$ as in  Definition \ref{def: definable EH}.
\end{rem}

Hence, $\CM$ satisfies the EH property precisely when the assumption of Theorem \ref{thm: repeated tower NIP} holds for all \emph{symmetric} definable relations. By the results in \cite{chernikov2015regularity} we know that this property holds in arbitrary reducts of distal structures.
\begin{defn}\label{def: distality}A structure $\CM$ is \emph{distal} if the following holds.

 For every formula $\varphi(x,y)$ there is a formula $\theta(x,y_1,\dotsc,y_n)$ with $|y_1|=\dotsb=|y_n|=|y|$  such that: for any finite $B \subseteq M^{|y|}$ with $|B| \geq 2$ and any $a \in M^{|x|}$, there are  $b_1,\dotsc,b_n \in B$ such that $\CM \models \theta(a,b_1,\dotsc,b_n)$ and for any $b \in B$, 
either $\varphi(M,b)\subseteq \theta(M,b_1,\dotsc,b_n)$ or $\varphi(M,b)\cap \theta(M,b_1,\dotsc,b_n)=\emptyset$.

\end{defn}
Distality was introduced in \cite{DistalPierre}, the equivalence of the original definition and the combinatorial definition above is from \cite{chernikov2015externally}, and the connection to combinatorics is from \cite{chernikov2015regularity} (see also \cite{DistalCutting}). Important examples of distal structures are given by arbitrary (weakly) $o$-minimal  and $P$-minimal structures. We refer to the introduction of \cite{chernikov2015regularity} for a detailed discussion of distality.

\begin{fact}\label{fact:distal-SEH-implies-EH}\cite{chernikov2015externally} If $\CM$ is a reduct of a distal structure then it satisfies the strong EH property (and so the EH property as well, by Remark \ref{rem: strong EH implies EH}).
	
\end{fact}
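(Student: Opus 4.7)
The plan is to reduce the strong EH property to the distal strong regularity lemma. First I would observe that every relation definable in a reduct is also definable in the ambient distal structure, so it suffices to establish strong EH when $\CM$ itself is distal. Fix a formula $\varphi(x_1, x_2; z)$ with $|x_1|=|x_2|=d$, and view it as a partitioned formula $\varphi(x_1; x_2, z)$ (distality is preserved under arbitrary repartitions of the variables). I would then invoke as a black box the distal strong regularity lemma of \cite{chernikov2015regularity}: for every $\epsilon > 0$ there is $K = K(\varphi, \epsilon)$ such that for any finite $V_1, V_2 \subseteq M^d$ and any $c \in M^{|z|}$, one can partition $V_1 = A_1 \sqcup \dotsb \sqcup A_K$ and $V_2 = B_1 \sqcup \dotsb \sqcup B_K$ so that the union of those rectangles $A_i \times B_j$ that are not $\varphi(\cdot,\cdot;c)$-homogeneous contains at most $\epsilon |V_1||V_2|$ pairs.

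Given this, the strong EH property follows by a short pigeonhole. I would fix $\epsilon := 1/8$, set $K := K(\varphi, \epsilon)$, and call a piece $A_i$ \emph{large} if $|A_i| \geq |V_1|/(2K)$, and analogously for $B_j$. Non-large pieces of $V_1$ together contain fewer than $K \cdot |V_1|/(2K) = |V_1|/2$ elements, so the union of the large $A_i$'s covers more than half of $V_1$; the same holds on the $V_2$-side. Factoring the sums, rectangles with both sides large therefore cover total area at least $|V_1||V_2|/4 > \epsilon |V_1||V_2|$, strictly exceeding the total area of non-homogeneous rectangles. Hence some rectangle $A_i \times B_j$ with both sides large is $\varphi(\cdot,\cdot;c)$-homogeneous, and setting $V'_1 := A_i$, $V'_2 := B_j$, $\delta := 1/(2K)$ yields the strong EH property.

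The hard part is the distal strong regularity lemma itself; the derivation above is essentially combinatorial bookkeeping. Proving distal strong regularity requires the theory of \emph{strong honest definitions} (equivalently, distal cell decompositions) for distal formulas, established in \cite{chernikov2015externally}, which refines the abstract condition in Definition \ref{def: distality} into a geometric partition of the host space into polynomially many $\varphi$-homogeneous cells with respect to any finite parameter set. Converting this geometric input into the regularity statement above then requires a Matou\v{s}ek-style packing or $(p,q)$-theorem argument. Without this infrastructure, there seems to be no direct route from Definition \ref{def: distality} to linear-density homogeneous rectangles on both sides.
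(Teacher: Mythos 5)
Your pigeonhole reduction from distal strong regularity to the strong EH property is correct as a piece of combinatorics: with $\epsilon = 1/8$, the large-by-large rectangles cover at least $|V_1||V_2|/4$ pairs, which strictly exceeds the $\epsilon|V_1||V_2|$ mass budgeted to inhomogeneous rectangles, so some large-by-large rectangle is homogeneous, and $\delta = 1/(2K)$ works. The reduction from a reduct to the ambient distal structure is also fine. The issue is the logical direction of the black box. In \cite{chernikov2015regularity} the strong EH property for distal structures is proved \emph{first} (their Theorem 3.6, which is precisely the content of the Fact you are trying to establish), directly from the distal cutting lemma, and the distal strong regularity lemma (their Theorem 5.8) is then a \emph{corollary} of strong EH, obtained via the generic Fox--Sudakov / Fox--Pach--Suk iteration (refine greedily by homogeneous parts). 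There is no independent proof of distal strong regularity in the literature that bypasses strong EH, so invoking it as a primitive here is circular.

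The fix is to drop the detour through regularity and aim the infrastructure you mention in your last paragraph directly at strong EH. From the strong honest definitions of \cite{chernikov2015externally} one extracts the distal cutting lemma: for $\varphi(x;y)$ and any finite $B \subseteq M^{|y|}$, the space $M^{|x|}$ admits a partition into $O(|B|^c)$ cells each of which is $\varphi(x;b)$-homogeneous simultaneously for every $b \in B$. Applying this to a bounded-size weak $\epsilon$-net $N \subseteq V_2$ (available by the VC property, or by the $(p,q)$-theorem for distal families) gives constantly many cells, so by averaging some cell $D$ captures a $\delta$-fraction of $V_1$; $D$ is $\varphi$-homogeneous against every $b \in N$, and the $\epsilon$-net property upgrades this to homogeneity against a $\delta$-fraction of $V_2$. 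That is the route taken in \cite{chernikov2015regularity} for Theorem 3.6. Your argument essentially reproves this, but by a loop that passes through a statement whose known proof already contains the target.
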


In the next proposition, we demonstrate that in any structure satisfying the strong EH property, all (not necessarily symmetric) definable binary relations also  satisfy a polynomial Ramsey bound.

\begin{prop}\label{prop: sym EH implies EH}
Let $\CM$ be a structure satisfying the strong EH property. Then for all $\varphi(x,y;z)$ with $|x|=|y|$ we have $R^*_\varphi(n) \leq n^c$ for some $c = c(\varphi)$ and all $n$ large enough.
\end{prop}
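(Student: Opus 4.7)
The plan is to reduce the non-symmetric problem to a symmetric one via a case analysis, and then handle the residual \emph{tournament case} by iterating strong EH along a sequence-position-respecting bipartite split, combined with a recursive min-max analysis.

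First I would derive from $\varphi(x_1,x_2;z)$ the two symmetric $\CL$-formulas $\Phi_{\cap}(x_1,x_2;z) := \varphi(x_1,x_2;z)\wedge\varphi(x_2,x_1;z)$ and $\Phi_{\oplus}(x_1,x_2;z) := \varphi(x_1,x_2;z)\oplus\varphi(x_2,x_1;z)$. Since $\CM$ has strong EH for every formula, both $\CG^{\mathrm{sym}}_{\Phi_{\cap}}$ and $\CG^{\mathrm{sym}}_{\Phi_{\oplus}}$ inherit strong EH in the sense of Definition \ref{def: strongEH}(3), are closed under induced subgraphs, and therefore by Remark \ref{rem: strong EH implies EH} satisfy the (unary) EH property. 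Applying these successively to any finite sequence $(a_i:1\leq i\leq N)$ in $M^{d}$ and a fixed parameter $b\in M^{|z|}$ produces a subset $V^*\subseteq\{a_i\}$ of size at least $N^{\alpha}$ for some $\alpha=\alpha(\varphi)>0$, on which both $\Phi_\cap$ and $\Phi_\oplus$ are constant. In the two non-tournament cases (where $\Phi_\oplus\equiv 0$, corresponding to $\varphi$ holding in both directions on every pair, or in neither) the set $V^*$ is already $\varphi(x_1,x_2;b)$-indiscernible and we are done; the remaining \emph{tournament case} $\Phi_\cap\equiv 0,\,\Phi_\oplus\equiv 1$ (where exactly one of the two ordered instances of $\varphi$ holds on each pair from $V^*$) requires more work.

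To handle the tournament case, I would bipartition $V^*$ by sequence position into its earlier and later halves $V^*=V^*_e\sqcup V^*_l$ and apply strong EH to $\varphi$ with $V_1=V^*_e,\,V_2=V^*_l$, producing $W_e\subseteq V^*_e,\,W_l\subseteq V^*_l$ with $|W_e|,|W_l|\geq\delta|V^*|/2$ on which $\varphi(x,y;b)$ is constantly equal to some $\epsilon\in\{0,1\}$. In the tournament case, this also determines $\varphi(y,x;b)=1-\epsilon$ for the same pairs, giving a uniform across-signature $s_0=(\epsilon,1-\epsilon)\in\{(0,1),(1,0)\}$ with respect to the sequence order, since every element of $W_e$ precedes every element of $W_l$. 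I would then recurse on $W_e$ and $W_l$, keeping track, for each $s\in\{(0,1),(1,0)\}$, of the worst-case length $T_s(M)$ of a signature-$s$ $\varphi(\cdot,\cdot;b)$-indiscernible subsequence of a size-$M$ sub-tournament: the inductive step gives the doubling $T_s(M)\geq T_s(|W_e|)+T_s(|W_l|)$ when the across-signature matches the target, i.e.\ $s=s_0$, and only the weaker one-sided bound $T_s(M)\geq\max(T_s(|W_e|),T_s(|W_l|))$ otherwise.

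The main obstacle is that, if we commit to a single target signature from the outset, the across-signatures chosen at successive recursion depths may disagree with it and fail to produce any growth. This is overcome by a min-max analysis of the induced two-player game in which the tournament structure chooses the across-signature $s_0$ at each internal node of the recursion tree so as to minimize $\max(T_{(0,1)}, T_{(1,0)})$ at the root: a direct calculation shows that at every other recursion level the adversary is forced to double one of the two signature counters, so that at recursion depth $d$ the guarantee is at least $2^{\lceil d/2\rceil}$, which translates into $\max(T_{(0,1)}(M),T_{(1,0)}(M))\geq M^{\beta}$ with $\beta=\beta(\delta)>0$ (roughly $\log 2/(2\log(2/\delta))$) for tournaments of size $M$. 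Combined with the polynomial lower bound $|V^*|\geq N^{\alpha}$ from the symmetric reduction, this yields $R^*_\varphi(n)\leq n^{c}$ for $c=1/(\alpha\beta)$.
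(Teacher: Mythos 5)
Your overall architecture matches the paper's: reduce to the tournament case via two auxiliary symmetric definable relations, then exploit strong EH recursively. The symmetric reduction is essentially identical (the paper uses $E_0 = \varphi \vee \varphi^{\mathrm{op}}$ and $E_1 = \varphi \wedge \varphi^{\mathrm{op}}$ sequentially, you use $\varphi \wedge \varphi^{\mathrm{op}}$ and $\varphi \oplus \varphi^{\mathrm{op}}$ in parallel; both land in the same three cases). Where you genuinely diverge is the tournament step. The paper tracks a \emph{single} quantity $f(m)$, the largest $E$-linearly-ordered subset guaranteed inside every $m$-subset (ignoring sequence order entirely), proves $f(m) \geq 2f(\alpha m)$ from strong EH, and only at the very end invokes Erd\H{o}s--Szekeres to find, inside such an $E$-ordered subset, a subsequence on which the $E$-order and the sequence order agree or anti-agree. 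You instead split by \emph{sequence position} at every level, so the across-signature is always order-aware, and must track \emph{two} quantities (forward- and backward-monotone lengths) with a min--max argument to show the adversary cannot suppress both. That argument is correct in essence, but as stated it is not quite rigorous: the claim that the adversary ``is forced to double one of the two counters at every other level'' is computed for a balanced complete binary recursion, while the actual splits produce subtrees of unequal (and only lower-bounded) size, so the depths of the two branches differ and ``every other level'' is ill-defined. The clean way to make your idea airtight is to induct on the \emph{product}: if $L, L'$ denote the longest forward- and backward-monotone subsequences of a size-$M$ sub-tournament, then $L \cdot L' \geq (L_e + L_l)\max(L'_e, L'_l) \geq L_e L'_e + L_l L'_l \geq |W_e|^{2\beta} + |W_l|^{2\beta} \geq 2(\delta M/2)^{2\beta} \geq M^{2\beta}$ for $\beta = \tfrac{\log 2}{2\log(2/\delta)}$, which gives your $\max(L,L') \geq M^{\beta}$ directly and absorbs the unequal-size issue. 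Your route avoids a black-box appeal to Erd\H{o}s--Szekeres at the cost of a two-variable recursion; the paper's one-variable recursion plus Erd\H{o}s--Szekeres is shorter and easier to verify, but the two arguments yield comparable exponents and both are valid. You should also, like the paper, dispose explicitly of the diagonal (pass to pairwise-distinct entries and replace $\varphi$ by $\varphi \wedge x_1 \neq x_2$) so that the tournament case really is a tournament.
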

\begin{proof}
	
Let $d:=|x|=|y|$, and let $E(x,y) \subseteq M^{d} \times M^{d}$ be a definable relation given by $\varphi(x,y;b)$ for some parameter $b \in M^{|z|}$.
We want to show that there is some real $c = c(\varphi) > 0$ such that every finite sequence from $M^{d}$ of length
$n$ contains an $E$-indiscernible subsequence of length $n^c$.  By  Fact~\ref{fact:distal-SEH-implies-EH} we know that it is true in the case when $E$ is
symmetric.

Let $\vec a=(a_1,\dotsc, a_n)$ be a sequence in $M^d$. For simplicity we will
assume that all of the $a_i$'s are pairwise distinct. We can always achieve it by
taking a subsequence of length $\sqrt{n}$. We will also assume that
$\models \neg E(x,x)$, i.e. $E$ is irreflexive (replacing $E(x,y)$ by $E'(x,y) := E(x,y) \land x \neq y$, any $E'$-indiscernible subsequence of $\vec{a}$ is also $E$-indiscernible).

Consider the relation $E_0(x,y)= E(x,y)\vee E(y,x)$. 
It is symmetric. Hence $\vec a$ contains an $E_0$-indiscernible
subsequence of length $n^{c_1}$, with $c_1 = c_1(\varphi) > 0$. If $\neg E_0(x,y)$ holds on every increasing pair of elements in this 
subsequence  then we are done. Otherwise, replacing $\vec a$ with this subsequence, we may assume that 
$E_0(x,y)$ holds on $\vec a$.

Now consider the relation $E_1(x,y)= E(x,y)\wedge E(y,x)$. Again it is
symmetric, so $\vec a$ contains an $E_1$-homogeneous subsequence of
polynomial length.  If $E_1(x,y)$ holds on this subsequence then we
are done. Assume otherwise, then again replacing $\vec a$ with this subsequence
we may assume that $\neg E_1(x,y)$ holds on $\vec a$. 
\medskip

Let $A=\{ a_i \colon i\in \{ 1, \ldots, n \}\}$.  We have that for $a\neq b\in A$
exactly one of $E(a,b)$ or $E(b,a)$ holds, and we also have that  $\neg
E(a,a)$ holds for all $a\in A$.  Hence $E$ is a \emph{tournament} on $A$.

Our goal is to show that for some $A_0\subseteq A$ of size $n^{c_2}$, with $c_2 = c_2(\varphi) > 0$, $E$ restricted to $A_0$ defines a linear order.  Then, by the 
Erd\H{o}s-Szekeres Theorem (Fact \ref{fac: Erdos-Szekeres}), a subsequence corresponding to $A_0$ would contain an
$E$-monotone subsequence of length $\sqrt{|A_0|}$ and we would be  done.

For an integer $m\leq n$, let's denote by $f(m)$ the
maximal $k$ such that every subset $A'\subseteq A$ of size $m$
contains a linearly ordered subset of size $k$. Obviously, we have $f(m)\geq 1$ for all $m\geq 1$. 

Now we use the strong EH property. We know that there is $0< \alpha <1$, with $\alpha = \alpha(\varphi)$, such
that for any $B\subseteq A$ there are disjoint subsets $B_0,B_1 \subseteq B$
with $|B_0|,|B_1|\geq \alpha|B|$ that are $E$-homogeneous. 
If $C_0\subseteq B_0, C_1\subseteq B_1$ are subsets linearly ordered
by $E$, then by $E$-homogeneity   $C_0 \cup C_1$ is also linearly ordered
by $E$.

This implies that $f(m)\geq 2f(\alpha m)$ and for any $s\in \NN$ we get 
$f(m)\geq 2^sf(\alpha^s m)$.

Recall that $|A|=n$. We choose the maximal $s$ such that $\alpha^s n \geq
1$.  Up to taking the integer part, we have 
\[ s\log(\alpha)+\log(n)\geq 0, \text{ i.e. we have } 
s \geq \frac{-\log{n}}{\log{\alpha}}.
 \] 
Then we get 
\[ f(n)\geq 2^\frac{-\log{n}}{\log{\alpha}}=n^{-\frac{1}{\log{\alpha}}}, \]
and taking $c := - \frac{1}{2 \log \alpha} > 0$  we can conclude the result.
\end{proof}

Hence the assumption of Theorem \ref{thm: repeated tower NIP} is satisfied in reducts of distal structures by Fact \ref{fac: EH in distal} and Proposition \ref{prop: sym EH implies EH}. As every distal structure is NIP, and every reduct of an NIP structure is NIP, applying Theorem \ref{thm: stepping down NIP} we get the following.

\begin{cor}
Let $\CM$ be a reduct of a distal structure. Then for any $\varphi(x_1,\dotsc,x_k;z)$   we have $R^*_\varphi (n) \leq \twr_{k-1}(n^c)$ for some $c = c(\varphi)$ and all $n$ large enough.
\end{cor}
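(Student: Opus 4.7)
The plan is to assemble this corollary by chaining together the results established immediately before its statement: Fact~\ref{fact:distal-SEH-implies-EH}, Proposition~\ref{prop: sym EH implies EH}, and Theorem~\ref{thm: repeated tower NIP}. The key observation is that everything has already been proved; only the wiring remains, and the corollary is essentially a formal consequence.

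First I would verify the two standing hypotheses needed to invoke Theorem~\ref{thm: repeated tower NIP}. The NIP hypothesis is immediate: every distal structure is NIP, and being NIP is preserved under reducts (the VC-classes $\CF_\varphi$ for $\varphi$ in the reduct are a subfamily of the VC-classes of the full distal structure), so $\CM$ is NIP. The quadratic base case for binary formulas is obtained in two steps: Fact~\ref{fact:distal-SEH-implies-EH} tells us that $\CM$, being a reduct of a distal structure, satisfies the strong Erd\H{o}s--Hajnal property; Proposition~\ref{prop: sym EH implies EH} then upgrades this strong EH property on symmetric relations to a polynomial Ramsey bound $R^*_\psi(n) \leq n^{c(\psi)}$ on \emph{all} binary formulas $\psi(x_1, x_2; z)$ with $|x_1| = |x_2|$.

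With both hypotheses of Theorem~\ref{thm: repeated tower NIP} verified, I would apply it directly to $\varphi(x_1, \ldots, x_k; z)$ to conclude $R^*_\varphi(n) \leq \twr_{k-1}(n^c)$ for some $c = c(\varphi)$ and all sufficiently large $n$. There is no genuine obstacle here, since all the substantive work, namely the stepping-down lemma in NIP (Theorem~\ref{thm: stepping down NIP}), its iteration (Theorem~\ref{thm: repeated tower NIP}), and the tournament/Erd\H{o}s--Szekeres argument deriving a polynomial bound for asymmetric binary relations from the strong EH property (Proposition~\ref{prop: sym EH implies EH}), has already been carried out above. The only mild subtlety worth flagging explicitly in the write-up is the need to handle arbitrary (not necessarily equal) variable partitions $|x_1|, \ldots, |x_k|$, but this is absorbed either by padding variables or by appealing to Theorem~\ref{thm: stepping down NIP} as stated (which assumes $|x_1|=\dotsb=|x_k|=d$); the general case follows by the standard trick of taking $d$ to be the maximum. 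Thus the proof reduces to a one-line citation of the three preceding results.
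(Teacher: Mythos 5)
Your proof is correct and follows exactly the same route as the paper: verify that $\CM$ is NIP (since distal implies NIP and NIP is preserved under reducts), establish the polynomial binary bound from Fact~\ref{fact:distal-SEH-implies-EH} plus Proposition~\ref{prop: sym EH implies EH}, and apply Theorem~\ref{thm: repeated tower NIP}. The one caveat you flag about unequal variable partitions is moot: Definition~\ref{def: main} already requires $|x_1|=\dotsb=|x_k|=d$ for $R^*_\varphi$ to be defined, and the recursion in Theorem~\ref{thm: stepping down NIP} preserves this equality at each step, so no padding trick is needed.
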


Note that the assumption of Theorem \ref{thm: repeated tower NIP} is also trivially satisfied in stable structures by Fact \ref{fac: stable EH}. We conjecture that it holds in arbitrary NIP structures.

\begin{conj}
If $\CM$ is an NIP structure and $\varphi(x_1,x_2;z)$ is a formula, then $R^*_\varphi(n) \leq n^c$ for some $c = c(\varphi, \CM)$ and all sufficiently large $n$.
\end{conj}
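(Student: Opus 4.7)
The plan is to reduce the conjecture to verifying the strong Erd\H{o}s--Hajnal property for every binary $\CL$-definable relation on $\CM$: by Proposition~\ref{prop: sym EH implies EH}, this property already yields the desired polynomial bound. Concretely, it suffices to produce, for each $\varphi(x_1, x_2; z)$, a constant $\delta = \delta(\varphi) > 0$ such that for every parameter $b \in M^{|z|}$ and every pair of finite sets $V_1, V_2 \subseteq M^{|x_1|}$ there exist $V_i' \subseteq V_i$ with $|V_i'| \geq \delta |V_i|$ on which the relation $\varphi(x_1, x_2; b)$ is homogeneous.

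The first attempt would be to emulate the distal argument behind Fact~\ref{fact:distal-SEH-implies-EH}, replacing the distal cell decomposition by uniform definability of $\varphi$-types over finite sets (Fact~\ref{fac: UDTFS}). Given $V_1, V_2$, one applies UDTFS to the opposite formula $\varphi^{\op}(x_2; x_1, z)$ to partition $V_2$ into polynomially many $\varphi^{\op}$-type classes over $V_1$ (relative to $b$). Each such class induces a fixed row pattern on $V_1$, so the problem reduces to finding a large pair $(V_1', V_2')$ with $V_2'$ contained in a single class and a constant-sign column indicator on $V_1'$. One would then iterate this refinement, alternating between the two sides, trying to preserve polynomial density through all rounds.

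The main obstacle is controlling the loss at each refinement step. Without a polynomially-sharp regularity lemma for NIP graphs --- one where \emph{nearly all} pairs of classes are completely homogeneous rather than merely of low density or pseudorandom --- the iteration degrades to a quasipolynomial or worse bound. No such regularity statement is currently known in full NIP generality; indeed, the conjecture subsumes the Erd\H{o}s--Hajnal conjecture for many specific graph classes (for instance $C_5$-free graphs) that remain famously open. A more realistic interim goal is to establish the conjecture under the additional hypothesis of finite dp-rank, inducting on the dp-rank while applying the stable case (Fact~\ref{fac: stable EH}) and the distal case (Fact~\ref{fact:distal-SEH-implies-EH}) to orthogonal components of a suitable type decomposition; this is where I would focus most of the effort before attempting the full conjecture.
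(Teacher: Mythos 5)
The statement you have been asked to prove is explicitly labelled a \emph{conjecture} in the paper; the authors neither prove it nor claim to, so there is no paper proof to compare against. Your proposal correctly recognizes this and, rather than claiming a proof, outlines the natural first attempt and correctly diagnoses why it stalls: reducing to the strong EH property via Proposition~\ref{prop: sym EH implies EH} and then trying to extract strong EH from UDTFS (Fact~\ref{fac: UDTFS}) only partitions $V_2$ into polynomially many $\varphi^{\op}$-type classes over $V_1$, so restricting $V_2$ to a single class keeps a $1/\mathrm{poly}(|V_1|)$ fraction rather than a constant fraction, which is not what strong EH requires and does not improve under iteration. The distal argument behind Fact~\ref{fact:distal-SEH-implies-EH} relies on a definable cell-decomposition and cutting machinery that is genuinely stronger than UDTFS, and this is exactly the missing ingredient in general NIP. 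Your assessment of the state of the problem matches the paper's own discussion.

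One factual correction is needed, however. The claim that the conjecture ``subsumes the Erd\H{o}s--Hajnal conjecture for many specific graph classes (for instance $C_5$-free graphs)'' is not accurate. The family of $C_5$-free graphs has unbounded VC-dimension --- it contains all bipartite graphs, which shatter sets of arbitrary size --- so it cannot be realized as a subfamily of $\{\varphi(M;b) : b\}$ for a fixed formula $\varphi$ in any NIP structure. The conjecture concerns only graph families that are uniformly definable in a fixed NIP structure (in particular, of bounded VC-dimension), which is incomparable to, and in general strictly narrower than, the setting of the classical Erd\H{o}s--Hajnal conjecture for a single forbidden induced subgraph. The correct link, as the paper states, is that the conjecture (for symmetric $\varphi$) is equivalent to the EH property for all graphs definable in NIP structures, not to the general Erd\H{o}s--Hajnal conjecture.
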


This conjecture, in the case of a symmetric formula, is equivalent to saying that all graphs definable in NIP structures satisfy the Erd\H{o}s-Hajnal property. We refer the reader to \cite{chernikov2015regularity, chernikov2016definable} for further discussion.

\bibliographystyle{acm}
\bibliography{refs}
\end{document}